\theoremstyle{plain}
\newtheorem{theorem}{Theorem}[section]
\newtheorem{proposition}[theorem]{Proposition}
\newtheorem{lemma}[theorem]{Lemma}
\newtheorem{corollary}[theorem]{Corollary}
\newtheorem{remark}[theorem]{Remark}
\theoremstyle{definition}
\newtheorem{example}[theorem]{Example}
\numberwithin{equation}{section}
\newcommand{\N}{\ensuremath{\mathbb{N}}}
\newcommand{\Z}{\ensuremath{\mathbb{Z}}}
\newcommand{\R}{\ensuremath{\mathbb{R}}}
\renewcommand{\P}{\ensuremath{\mathbb{P}}}
\newcommand{\E}{\ensuremath{\mathbb{E}}}
\newcommand{\1}{\ensuremath{\mathbbm{1}}}
\newcommand{\M}{\ensuremath{\mathcal{M}}}
\newcommand{\bzeta}{\ensuremath{\boldsymbol\zeta}}
\title{A modified lookdown construction for the Xi-Fleming-Viot
process with mutation and populations with recurrent bottlenecks}
\author{Matthias Birkner\footnote{Weierstra{\ss}-Institut f\"ur Angewandte
Analysis und Stochastik, Mohrenstra{\ss}e 39, 10117 Berlin, Germany,
e-mail: {\tt birkner@wias-berlin.de},
URL: {\tt http://www.wias-berlin.de/}$\sim${\tt birkner}}\ ,
Jochen Blath\footnote{Institut f\"ur Mathematik, Technische
Universit\"at Berlin, Stra{\ss}e des 17.~Juni 136, 10623 Berlin, Germany,
e-mail: {\tt blath@math.tu-berlin.de}, {\tt steinrue@math.tu-berlin.de},
{\tt johannatams@gmx.de},
URL: {\tt http://www.math.tu-berlin.de/}$\sim${\tt blath},
{\tt http://www.math.tu-berlin.de/}$\sim${\tt steinrue}}\ ,
Martin M\"ohle\footnote{Mathematisches Institut,
Universit\"at D\"usseldorf, Universit\"atsstra{\ss}e 1, 40225 D\"usseldorf,
Germany, e-mail: {\tt moehle@math.uni-duesseldorf.de},
URL: {\tt http://www.math.uni-duesseldorf.de/Personen/indiv/Moehle}
(corresponding author)}\ ,\\
Matthias Steinr\"ucken$^\dag$ and Johanna Tams$^\dag$}
\date{27th of October, 2008}
\begin{document}

\maketitle

\begin{abstract}
  Let $\Lambda$ be a finite measure on the unit interval. A
  $\Lambda$-Fleming-Viot process is a probability measure valued
  Markov process which is dual to a coalescent with multiple collisions
  ($\Lambda$-coalescent) in analogy to the duality known for
  the classical Fleming-Viot process and Kingman's coalescent, where
  $\Lambda$ is the Dirac measure in $0$.

  We explicitly construct a dual process of the coalescent with
  simultaneous multiple collisions ($\Xi$-coalescent) with mutation,
  the $\Xi$-Fleming-Viot process with mutation, and
  provide a representation based on the empirical measure of an
  exchangeable particle system along the lines of Donnelly and Kurtz
  (1999). We establish pathwise convergence of the approximating
  systems to the limiting $\Xi$-Fleming-Viot process with mutation.
  An alternative construction of the semigroup based on the
  Hille-Yosida theorem is provided and various types of duality of the
  processes are discussed.

  In the last part of the paper a population is considered which
  undergoes recurrent bottlenecks. In this scenario, non-trivial
  $\Xi$-Fleming-Viot processes naturally arise as limiting models.
\end{abstract}

AMS subject classification.
{\em Primary:}
               60K35;   
               60G09;   
               92D10    
{\em Secondary:} 60C05; 
                 92D15  

\vspace{2mm}

Keywords: coalescent, duality, Fleming-Viot process, measure-valued
process, modified\\
lookdown construction

\vspace{2mm}

Running head: Lookdown construction for Fleming-Viot processes


\section{Introduction and main results}\label{sec:introduction}

\subsection{Motivation}
   One of the fundamental aims of mathematical population genetics is
   the construction of population models in order to describe and
   to analyse certain phenomena which are of interest for biological
   applications. Usually these models are constructed such that they
   describe the evolution of the population under consideration
   forwards in time. A classical and widely used model of this kind is
   the Wright-Fisher diffusion, which can be used for large populations
   to approximate the evolution of the fraction of individuals carrying a
   particular allele. On the other hand it is often quite helpful
   to look from the present back into the past and to trace back the ancestry
   of a sample of $n$ individuals, genes or particles. In many situations,
   the Kingman coalescent \cite{K82a, K82b} turns out to be an
   appropriate tool to approximate the ancestry of a sample
   taken from a large population. It is well known that the Wright-Fisher
   diffusion is dual to the block counting process of the Kingman
   coalescent \cite{D86, M01}. More general, the Fleming-Viot process
   \cite{FV79}, a measure-valued extension of the Wright-Fisher diffusion,
   is dual to the Kingman coalescent.
   Such and similar duality results are quite common in particular in the
   physics literature on interacting particle systems \cite{L85} and in
   the more theoretical literature on mathematical population genetics
   \cite{AH07, AS05, DK96, DK99, EK95, H00, M99, M01}. Donnelly and Kurtz
   \cite{DK96} established a so-called lookdown construction and used this
   construction to show that the Fleming-Viot process is dual to the Kingman
   coalescent. This construction and corresponding duality results have been
   extended \cite{DK99, BLG03, BLG05, BLG06} to the $\Lambda$-Fleming-Viot
   process, which is the measure-valued dual of a coalescent process
   allowing for multiple collisions of ancestral lineages. For more
   information on coalescent processes with multiple collisions, so-called
   $\Lambda$-coalescents, we refer to Pitman \cite{P99} and Sagitov \cite{S99}.

   There exists a broader class of coalescent processes
   \cite{MS01, S00, S03} in which many multiple
   collisions can occur with positive probability simultaneously at the same
   time. These processes can be characterized by
   a measure $\Xi$ on an infinite simplex and are hence called
   $\Xi$-coalescents. It is natural to further extend the above
   constructions and results to this full class of coalescent
   processes and, in particular, to provide constructions of the dual processes,
   called $\Xi$-Fleming-Viot processes. Although such
   extensions have been briefly indicated in \cite{DK99} and
   \cite{BLG03}, these extensions have not been carried out in detail yet.
   $\Xi$-coalescents have also recently been applied to study population genetic
   problems, see \cite{TV08, SW08}.

   The motivation to present this paper is hence manifold. We explicitly construct
   the $\Xi$-Fleming-Viot process and provide a representation via empirical
   measures of an exchangeable particle system in the spirit of Donnelly
   and Kurtz \cite{DK96, DK99}. We furthermore establish corresponding convergence
   results and pathwise duality to the $\Xi$-coalescent. We also provide
   an alternative, more classical functional-analytic construction of the
   $\Xi$-Fleming-Viot process based on the Hille-Yosida theorem and present
   representations for the generator of the $\Xi$-Fleming-Viot process. Our
   approaches include neutral mutations. The results give insights into the
   pathwise structure of the $\Xi$-Fleming-Viot process and its dual
   $\Xi$-coalescent. Examples and situations are presented in which certain
   $\Xi$-Fleming-Viot processes and their dual $\Xi$-coalescents
   occur naturally.

\subsection{Moran models with (occasionally) large families}\label{GenMoranMod}
Consider a population of fixed size $N\in{\mathbb N}:=\{1,2,\ldots\}$ and
assume that each individual is of a certain type, where the space $E$ of
possible types is assumed to be compact and Polish. Furthermore
assume that for each
vector ${\bf k}=(k_1,k_2,\ldots)$ of integers satisfying $k_1\ge k_2\ge\cdots\ge 0$
and $\sum_{i=1}^\infty k_i\le N$ a non-negative real quantity $r_N({\bf k})\ge 0$
is given. The population is assumed to evolve in continuous time as follows.
Given a vector ${\bf k}=(k_1,\ldots,k_m,0,0,\ldots)$, where
$k_1\ge \cdots\ge k_m\ge 1$ and $k_1+\cdots+k_m\le N$, with rate $r_N({\bf k})$ we
choose randomly $m$ groups of sizes $k_1,\ldots,k_m$ from the present population.
Inside each of these $m$ groups we furthermore choose randomly a `parent'
which forces all individuals in its group to change their type to the type of
that parent. We say that a ${\bf k}$-reproduction event occurs with rate
$r_N({\bf k})$. The classical Moran model corresponds to $r_N(2,0,0,\ldots)=N$.

Except for the fact that these models are formulated in continuous time, they
essentially coincide with the class of neutral exchangeable population models
with non-overlapping generations introduced by Cannings \cite{C74, C75}.
Starting with the seminal work of Kingman \cite{K82a, K82b}, the genealogy
of samples taken from such populations is well understood, in particular
for the situation when the total population size $N$ tends to infinity.

\subsection{Genealogies and exchangeable coalescents}\label{ExCoal}
For neutral population models of large, but fixed population size and
finite-variance reproduction mechanism, Kingman \cite{K82a} showed that
the genealogy of a finite sample of size $n$ can be approximately
described by the so called $n$-coalescent
$(\Pi^{\delta_{\bf 0},(n)}_t)_{t\ge 0}$.  The $n$-coalescent is a
time-homogeneous Markov process taking values in $\mathcal{P}_n$,
the set of partitions of $\{1,\ldots,n\}$. If $i$ and $j$ are in the
same block of the
partition $\Pi^{\delta_{\bf 0},(n)}_t$, then they have a common
ancestor at time $t$ ago.  $\Pi^{\delta_{\bf 0},(n)}_0$ is the
partition of $\{1,\ldots,n\}$ into singleton blocks.  The transitions
are then given as follows: If there are $b$ blocks at present, then
each pair of blocks merges with rate 1, thus the overall rate of
seeing a merging event is ${b \choose 2}$. Note that only binary
mergers are allowed and that at some random time, all individuals will
have a (most recent) common ancestor.

Kingman \cite{K82a} also showed that there exists a
$\mathcal{P}_\N$-valued Markov process $(\Pi^{\delta_{\bf 0}}_t)_{t\ge 0}$,
where $\mathcal{P}_\N$ denotes the set of partitions of $\N$. This process,
the so-called Kingman coalescent, is
characterised by the fact that for each $n$ the restriction of
$(\Pi^{\delta_{\bf 0}}_t)_{t\ge 0}$ to the first $n$ natural numbers is the
$n$-coalescent. The process can be constructed by an application of
the standard Kolmogoroff extension theorem, since the restriction of every
$n$-coalescent to $\{1,\ldots,m\}$, where $1\le m\le n$, is an
$m$-coalescent.

Whereas the Kingman coalescent allows only for binary mergers, the
idea of a time-homo\-ge\-neous $\mathcal{P}_\N$-valued Markov process that
evolves by the coalescence of blocks was extended by Pitman
\cite{P99} and Sagitov \cite{S99} to coalescents where multiple blocks
are allowed to merge at the same time, so-called $\Lambda$-coalescents, which
arise as the limiting genealogy of populations where the variance of
the offspring distribution diverges as the population size tends to
infinity. M\"ohle and Sagitov \cite{MS01} and Schweinsberg \cite{S00}
introduced the even larger class of coalescents with simultaneous
multiple collisions, also called exchangeable coalescents or
$\Xi$-coalescents, which describe the genealogies of populations
allowing for large family sizes.

Schweinsberg \cite{S00} showed that any exchangeable coalescent
$(\Pi^\Xi_t)_{t\ge 0}$ is characterised by a finite measure
$\Xi$ on the infinite simplex
\[
\Delta\ :=\ \{\bzeta=(\zeta_1,\zeta_2,\ldots):\zeta_1\ge\zeta_2\ge\cdots\ge 0,
\mbox{$\sum_{i=1}^\infty$}\zeta_i\le 1\}.
\]
Throughout the paper, for $\bzeta\in\Delta$, the notation
$|\bzeta|:=\sum_{i=1}^\infty\zeta_i$ and $(\bzeta,\bzeta):=\sum_{i=1}^\infty
\zeta_i^2$ will be used for convenience.
Note that M\"ohle and Sagitov \cite{MS01} provide an alternative (though
somewhat less intuitive) characterisation of the $\Xi$-coalescent
based on a sequence of finite symmetric measures $(F_r)_{r\in\N}$.
Coalescent processes with multiple collisions ($\Lambda$-coalescents)
occur if the measure $\Xi$ is concentrated on the subset of all points
$\bzeta\in\Delta$ satisfying $\zeta_i=0$ for all $i\ge 2$. The
Kingman-coalescent corresponds to the case $\Xi = \delta_{\bf 0}$.
It is convenient to decompose the measure $\Xi$ into a `Kingman part'
and a `simultaneous multiple collision part', that is,
$\Xi=a\delta_{\mathbf 0}+\Xi_0$ with $a:=\Xi(\{\mathbf 0\})\in[0,\infty)$
and $\Xi_0(\{\mathbf 0\})=0$. The transition rates of the $\Xi$-coalescent
$\Pi^\Xi$ are given as follows. Suppose there are currently $b$ blocks.
Exactly $\sum_{i=1}^r k_i$ blocks collide into $r$ new blocks, each
containing $k_1,\dots,k_r\ge 2$ original blocks, and
$s$ single blocks remain unchanged, such that the condition
$\sum_{i=1}^r k_i+s=b$ holds. The order of $k_1,\dots,k_r$ does not matter.
The rate at which the above collision happens is then given as
(Schweinsberg \cite[Theorem~2]{S00})
\begin{equation} \label{rates}
   \lambda_{b;k_1,\dots,k_r;s}
   \ =\ a\1_{\{r=1,k_1=2\}} +
   \int_\Delta\sum_{l=0}^s {s\choose l}(1-|\bzeta|)^{s-l}
   \sum_{i_1\neq\cdots\neq i_{r+l}}
     \zeta_{i_1}^{k_1}\cdots\zeta_{i_r}^{k_r}\zeta_{i_{r+1}}\cdots\zeta_{i_{r+l}}
   \frac{\Xi_0(d{\bzeta})}{(\bzeta,\bzeta)}.
\end{equation}
An intuitive explanation of (\ref{rates}) is given below in terms
of Schweinsberg's \cite{S00} Poisson process construction of the
$\Xi$-coalescent. If $\Xi(\Delta)\ne 0$, then without loss of
generality it can be assumed that $\Xi$ is a probability measure,
as remarked after Eq. (12) of \cite{S00}. Otherwise simply divide
each rate by the total mass $\Xi(\Delta)$ of $\Xi$.

\subsection{Poisson process construction of the $\Xi$-coalescent}\label{Erhard}
It is convenient to give an explicit construction of the $\Xi$-coalescent
in terms of Poisson processes. Indeed, Schweinsberg \cite[Section 3]{S00}
shows that the $\Xi$-coalescent can be constructed from a family of Poisson
processes $\{\mathfrak{N}^K_{i,j}\}_{i,j\in\N,i<j}$ and a Poisson point
process $\mathfrak{M}^{\Xi_{0}}$ on $\R_+\times\Delta\times [0,1]^{\N}$.
The processes $\mathfrak{N}^K_{ij}$ have rate $a=\Xi(\{\mathbf 0\})$
each and govern the binary mergers of the coalescent. The process
$\mathfrak{M}^{\Xi_{0}}$ has intensity measure
\begin{equation}
\label{intensity_measure}
   dt\otimes\frac{\Xi_0(d\bzeta)}{(\bzeta,\bzeta)}
   \otimes(\1_{[0,1]}(t)dt)^{\otimes\N}.
\end{equation}
These processes can be used to construct the $\Xi$-coalescent as
follows: Assume that before the time $t_m$ the process $\Pi$ is
in a state $\{B_1,B_2,\ldots\}$. If $t_m$ is a point of increase
of one of the processes $\mathfrak{N}^K_{i,j}$ (and there are at
least $i\vee j$ blocks), then we merge the corresponding blocks
$B_i$ and $B_j$ into a single block (and renumber). This mechanism
corresponds to the Kingman-component of the coalescent.

The non-Kingman collisions are governed by the points
\begin{equation}
    (t_m,{\bzeta}_m,{\bf u}_m)
    \ =\ (t_m,(\zeta_{m1},\zeta_{m2},\ldots),(u_{m1},u_{m2},\ldots))
\end{equation}
of the Poisson process $\mathfrak{M}^{\Xi_0}$. The random vector
${\bzeta}_m$ denotes the respective asymptotic family sizes in the
multiple merger event at time $t_m$ and the ${\bf u}_m$ are ``uniform
coins'', determining the blocks participating in the respective merger
groups; see \eqref{eq_definition_g} or \cite[Section 3]{S00} for details.

\subsection{$\Xi$-Fleming-Viot processes}

An in many senses dual approach to population genetics is to view a
population of finite size as a vector of types $(Y^N_1,\ldots,Y^N_N)$
with values in $E^N$ or as an empirical measure of that vector
$\frac{1}{N}\sum_{i=1}^N\delta_{Y^N_i}$ and look at the evolution
under mutation and resampling forwards in time. When $N$ tends to
infinity one obtains the Fleming-Viot process \cite{FV79}. This
process has been extended to incorporate other important biological
phenomena and has found wide applications, see \cite{EK93} for a survey.

Donnelly and Kurtz \cite{DK96} embedded an $E^\infty$-valued particle
system into the classical Fleming-Viot process, via a clever lookdown
construction, and showed that it is dual to the Kingman-coalescent.
This construction and the duality has been extended to the so-called
$\Lambda$-Fleming-Viot process, dual to the
$\Lambda$-coalescents, and investigated by several authors, see, e.g.,
\cite{DK99, BBC05, BLG03, BLG05, BLG06}, or \cite{BB07} for an
overview.

Let $f\in C_b(E^p)$, $\mu \in \M_1(E)$ and $G_f(\mu) := \langle f ,
\mu^{\otimes p} \rangle$. The generator of the $\Lambda$-Fleming-Viot
process without mutation has the form (see \cite[Equation (1.11)]{BBC05})
\begin{equation}\label{eq_lambda_fleming_viot}
    L^{\Lambda}G_f(\mu) = \sum_{J\subset\{1,\ldots,p\},|J|\ge 2} \lambda_{p;|J|;p-|J|} \int \big(f({\bf x}^J) - f({\bf x})\big)\,\mu^{\otimes p}(d{\bf x}),
\end{equation}
where
\begin{equation}
   ({\bf x}^J)_i\ =\
   \begin{cases}
      x_{\min(J)} & \mbox{if $i \in J$,}\\
      x_i         & \mbox{otherwise.}
   \end{cases}
\end{equation}
Note that (\ref{eq_lambda_fleming_viot}) includes the generator of the
classical Fleming-Viot process (without mutation) if the summation is
restricted to sets $J$ satisfying $|J|=2$.

Our aim in this paper is to present the modified lookdown construction for a
measure-valued process that we will call the $\Xi$-Fleming-Viot process with
mutation, or the $(\Xi,B)$-Fleming-Viot process. The symbol $B$ stands here
for an operator describing the mutation process. We will establish its
duality to the $\Xi$-coalescent with mutation.
The modified lookdown construction will also enable us to establish some path
properties of the $(\Xi,B)$-Fleming-Viot process.

\subsection{A modified lookdown construction of the $(\Xi,B)$-Fleming-Viot process}

Consider a population described by a vector $Y^N(t)=(Y^N_1(t),\ldots,Y^N_N(t))$
with values in $E^N$, where $Y^N_i(t)$ is the type of individual $i$ at
time $t$. The evolution of this population (forwards in time) has two
components, namely reproduction and mutation. During its lifetime, each
particle undergoes mutation according to the bounded linear mutation operator
\begin{equation} \label{mutop}
   Bf(x)\ =\ r\int_E(f(y)-f(x))\,q(x,dy),
\end{equation}
where $f$ is a bounded function on $E$, $q(x,dy)$ is a Feller transition
function on $E\times\mathcal B(E)$, and $r\ge 0$ is the global mutation rate.

The resampling of the population is governed by the Poisson point
process $\mathfrak{M}^{\Xi_{0}}$, which was introduced as a driving
process for the $\Xi$-coalescent. In particular, the resampling events
allow for the simultaneous occurrence of one or more large families.
The resampling procedure is described in detail in Section~\ref{exchangeable}.
An important fact is that this resampling is made such that it retains
exchangeability of the population vector.

In Section~\ref{exchangeable}, we introduce another particle system
$X^N=(X^N_1,\ldots,X^N_N)$ again with values in $E^N$. Each particle
mutates according
to the same generator \eqref{mutop} as before. For the resampling
event, we will use the same driving Poisson point process
$\mathfrak{M}^{\Xi_{0}}$, but we will use the modified lookdown
construction of Donnelly and Kurtz introduced in \cite{DK99}, suitably
adapted to our scenario. This $(\Xi,B)$-lookdown process will be
introduced in Section~\ref{orderedmodel}. It is crucial that the
resampling events retain exchangeability of the population vector
and that the process $\{X^N(t)\}$ has the same empirical measure $\sum_{i=1}^N
\delta_{X^N_i(t)}$ as the process $\{Y^N(t)\}$.

The construction of the resampling events allows us to pass to the
limit as $N$ tends to infinity and obtain an $E^\infty$-valued
particle system $X=(X_1,X_2,\ldots)$. Since this particle system is
also exchangeable, this procedure enables us to access the almost sure limit of
the empirical measure as $N$ tends to infinity by the De Finetti
Theorem (which is not possible for the $Y^N$).
\subsection{Results} \label{suse_results}
Let $\mathcal{D}(B)$ denote the domain of the mutation generator $B$ and
let $f_1,f_2,\ldots\in\mathcal{D}(B)$ be functions that separate points
of $\M_1(E)$ in the sense that $\int f_k\,d\mu=\int f_k\,d\nu$ for all
$k\in\N$ implies that $\mu=\nu$. Such sequences exist, see, e.g.\ Section 1
(Lemma 1.1 in particular) of \cite{DK96}. We use the metric $d$ on
$\M_1(E)$ defined via
\begin{equation}\label{metric_on_measures}
   d(\mu,\nu)\ :=\ \sum_k\frac{1}{2^k}
   \Big|\int f_k\,d\mu-\int f_k\,d\nu\Big|,
   \qquad\mu,\nu\in\M_1(E)
\end{equation}
and equip the topology of locally uniform convergence on
$D_{\M_1(E)}([0,\infty))$ with the metric
\begin{equation}\label{metric_on_paths_of_measures}
   d_p(\mu,\nu)\ :=\ \int_0^\infty e^{-t}d(\mu(t),\nu(t))\,dt.
\end{equation}
\begin{theorem} \label{main}
   The $\mathcal M_1(E)$-valued process $(Z_t)_{t\ge 0}$, defined in terms
   of the {\em ordered} particle system $X=(X^1,X^2,\dots)$ by
   \[
   Z_t\ :=\ \lim_{n\to\infty}Z^n_t
   \ =\ \lim_{n\to\infty}\frac 1n \sum_{i=1}^n \delta_{X_i(t)},\quad t\ge 0,
   \]
   is called the {\em $\Xi$-Fleming-Viot process with mutation operator $B$}
   or simply the {\em $(\Xi,B)$-Fleming-Viot process}. Moreover, the
   empirical processes $(Z_t^n)_{t\ge 0}$ converge almost surely on the
   path space $D_{\mathcal M_1(E)}([0,\infty))$ to the c\`adl\`ag process
   $(Z_t)_{t\ge 0}$.
\end{theorem}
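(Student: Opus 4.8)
The plan is to establish the almost sure existence of the limit $Z_t$ for each fixed $t$ via de Finetti's theorem, then upgrade this pointwise-in-time convergence to convergence in the path space $D_{\M_1(E)}([0,\infty))$ equipped with the metric \eqref{metric_on_paths_of_measures}. The crucial input, which I would assume from the explicit construction of the ordered particle system $X=(X_1,X_2,\ldots)$ in Section~\ref{orderedmodel}, is that at each fixed time $t$ the random sequence $(X_1(t),X_2(t),\ldots)$ is exchangeable. Granting this, de Finetti's theorem gives, for each fixed $t$, the almost sure convergence of the empirical measures $Z_t^n=\frac 1n\sum_{i=1}^n\delta_{X_i(t)}$ to a (random) limiting measure $Z_t\in\M_1(E)$. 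Since $E$ is compact and Polish, $\M_1(E)$ is itself compact and Polish in the weak topology, so the limit is a bona fide $\M_1(E)$-valued random variable.

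The main work lies in passing from almost sure convergence at each fixed $t$ to almost sure convergence of the whole paths in $D_{\M_1(E)}([0,\infty))$. First I would fix one of the separating test functions $f_k\in\mathcal D(B)$ and study the real-valued processes $t\mapsto\langle f_k,Z_t^n\rangle=\frac 1n\sum_{i=1}^n f_k(X_i(t))$. The strategy is to show these converge locally uniformly (or at least in the sense compatible with the Skorokhod $J_1$ topology and the integrated metric $d_p$) almost surely as $n\to\infty$. Here I would exploit the lookdown structure directly: in the lookdown construction the coordinate $X_i$ for low index $i$ does not depend on $N$ (or on the truncation level $n$) once $n\ge i$, because resampling events only ever copy types from lower-indexed to higher-indexed levels. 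This \emph{consistency in $n$} is what makes the construction powerful — increasing $n$ only appends new coordinates without disturbing the old ones — so the partial sums $\sum_{i=1}^n f_k(X_i(t))$ form a well-behaved sequence driven by a single underlying realization of the Poisson process $\mathfrak M^{\Xi_0}$ and the mutation mechanism.

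I expect the technical heart of the argument to be a quantitative control, uniform on compact time intervals, of the fluctuations of $\langle f_k,Z_t^n\rangle$ around its limit $\langle f_k,Z_t\rangle$. The natural tool is a martingale or maximal-inequality estimate: for fixed $k$ one can write $\langle f_k,Z_t^n\rangle-\langle f_k,Z_t^m\rangle$ and bound its supremum over $t\in[0,T]$ using Doob's inequality together with the exchangeability, obtaining a bound that is summable in $n$ so that Borel--Cantelli yields almost sure locally uniform convergence along the whole sequence. Some care is needed at the jump times generated by $\mathfrak M^{\Xi_0}$, since a single large reproduction event changes a macroscopic fraction of the population; one must check that these jumps are, almost surely, locally finite in number on $[0,T]$ so that between consecutive jumps the paths are continuous (driven only by the Feller mutation semigroup of $B$) and at jumps the limiting measure has matching left and right limits, giving a genuine c\`adl\`ag limit $Z$.

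Finally, I would assemble the pieces. Having almost sure locally uniform convergence of $\langle f_k,Z^n_\cdot\rangle$ to $\langle f_k,Z_\cdot\rangle$ for every $k$, and using that the $f_k$ separate points of $\M_1(E)$ so that the metric $d$ of \eqref{metric_on_measures} is governed by these test functions, a dominated-convergence argument (the summand $2^{-k}|\langle f_k,\mu\rangle-\langle f_k,\nu\rangle|$ is bounded by $2^{-k}\|f_k\|_\infty$, which is summable) transfers the convergence to $d(Z^n_t,Z_t)\to 0$ locally uniformly in $t$, and then another application of dominated convergence in the defining integral of $d_p$ in \eqref{metric_on_paths_of_measures} gives $d_p(Z^n_\cdot,Z_\cdot)\to 0$ almost surely. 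The càdlàg property of $Z$ follows because each $\langle f_k,Z_\cdot\rangle$ is càdlàg and the $f_k$ determine the topology. The single most delicate step, and the one I would spend the most effort justifying rigorously, is the uniform-in-time control of the empirical fluctuations across the macroscopic jumps of $\mathfrak M^{\Xi_0}$, since that is precisely where the $\Xi$-setting differs from the Kingman and $\Lambda$ cases and where naïve fixed-time estimates do not immediately globalize.
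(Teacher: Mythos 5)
Your first two steps (exchangeability of the lookdown levels plus de~Finetti at fixed times, and the consistency of the construction in $n$) agree with the paper, which proves exactly these facts in Theorem~\ref{exchangeable_fixed_times} and Section~\ref{limitpop}. The genuine gap is in the step where all the real work lies. You propose to obtain the uniform-in-time control and the c\`adl\`ag property by ``checking that the jump times generated by $\mathfrak{M}^{\Xi_0}$ are almost surely locally finite on $[0,T]$, so that between consecutive jumps the paths are continuous''. This check fails: the intensity measure \eqref{intensity_measure} involves $\Xi_0(d\bzeta)/(\bzeta,\bzeta)$, which is an \emph{infinite} measure on $\Delta$ in essentially every interesting case (any Beta- or Bolthausen--Sznitman-type $\Lambda$ viewed as a $\Xi$, for instance), so the points $t_m$ are almost surely dense in $\R_+$; the paper states this explicitly in Section~\ref{canon_moran_modell} (``the jump times $(t_m)$ may be dense in $\R_+$''). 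Only the events involving at least two of the first $l$ levels occur at finite rate, which is why each \emph{finite} level system is piecewise nice; the limiting measure-valued process, however, jumps on a dense set of times, so there are no ``consecutive jumps'' to interpolate between, and neither the c\`adl\`ag property nor the uniform control can be obtained this way. A second, related, problem: $\langle f_k,Z^n_t\rangle$ is not a martingale in $t$ (it has both resampling and mutation drift), so Doob's inequality cannot be applied to $\langle f_k,Z^n_t\rangle-\langle f_k,Z^m_t\rangle$ as you suggest; exchangeability alone does not compensate the resampling jumps into anything with summable maximal bounds.

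What the paper does instead (Section~\ref{tightness}) is to replace ``between consecutive jumps'' by ``on intervals where the accumulated squared jump mass is small''. It introduces the subordinator $U(t)=\sum_{t_m\le t}|\bzeta_m|^2$, restricts to the event $\{U(T)\le c\}$, and cuts $[0,T]$ into at most $o_l=2(c+T)l^4$ random intervals $[\alpha_o,\alpha_{o+1})$ on which both the length and the increment of $U$ are at most $l^{-4}$ --- such an interval may still contain infinitely many birth events. On each interval, the resampling fluctuation of $\langle f,Z^l\rangle$ is bounded through a binomial fourth-moment estimate (using crucially that each family's parent occupies one of the participating levels, whence the $(b_m-1)_+$ bound in \eqref{eq:sprich}), the Kingman part through a Poisson tail estimate, and the mutation part by compensating with $\int Bf$ to produce genuine martingales and applying Doob's inequality to averages of bounded exchangeable martingales (Lemma~\ref{A2}~b)); the discrepancies at the random endpoints $\alpha_o,\tilde\alpha_o$ are controlled by exchangeability \emph{at stopping times} (Theorem~\ref{exchangeable_stopping_times} and the corollary in Section~\ref{limitpop}) --- fixed-time exchangeability, which is all you invoke, does not suffice here. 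A contradiction argument then converts the control of $Z^l$ into control of the oscillation of $Z$ itself, yielding the summable bound of Lemma~\ref{lemma_for_bound}, and Borel--Cantelli gives almost sure locally uniform convergence; the c\`adl\`ag property of $Z$ then comes for free as a locally uniform limit of c\`adl\`ag paths, not from any local structure of the jump times. This decomposition via $U$ is the essential idea missing from your proposal.
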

Since the empirical measures of $X^N$ and $Y^N$ are identical, we arrive
at the following corollary.
\begin{corollary} \label{maincor}
   Define, for each $n$,
   \[
   \tilde Z^n_t:= \frac 1n \sum_{i=1}^n \delta_{Y_i(t)}, \quad t\ge 0,
   \]
   the empirical process of the $n$-th unordered particle system, and
   assume that $\tilde Z_0^n \to Z_0$ weakly as $n\to\infty$. Then,
   $(\tilde Z_t^n)_{t\ge 0}$ converges weakly on the path space
   $D_{\mathcal M_1(E)}([0,\infty))$ to the $(\Xi,B)$-Fleming-Viot
   process $(Z_t)_{t\ge 0}$.
\end{corollary}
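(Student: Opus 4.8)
The plan is to deduce the corollary from Theorem~\ref{main} by exploiting the equality in law between the empirical measure of the unordered system and that of the ordered lookdown system. The two ingredients are (i) Theorem~\ref{main}, which provides the almost sure (hence weak) convergence of the ordered empirical processes $(Z^n_t)_{t\ge0}$ to $(Z_t)_{t\ge0}$ on $D_{\M_1(E)}([0,\infty))$, and (ii) the defining property of the modified lookdown construction, namely that for each fixed $n$ the ordered system $X^n$ and the unordered system $Y^n$ give rise to the same empirical measure process. Granting these, the conclusion follows from the elementary principle that if two sequences of random elements agree in law term by term and one of them converges weakly, then so does the other.

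First I would make the equality in (ii) precise. By the construction in Section~\ref{exchangeable}, both $X^n$ and $Y^n$ are driven by the same Poisson point process $\mathfrak M^{\Xi_0}$ and the same mutation mechanism \eqref{mutop}, and the resampling is arranged so as to retain exchangeability; consequently, for matched initial empirical measures the two $\M_1(E)$-valued processes
\[
   \Big(\tfrac1n\textstyle\sum_{i=1}^n\delta_{Y^n_i(t)}\Big)_{t\ge0}
   \quad\text{and}\quad
   \Big(\tfrac1n\textstyle\sum_{i=1}^n\delta_{X^n_i(t)}\Big)_{t\ge0}
\]
have the same distribution on $D_{\M_1(E)}([0,\infty))$. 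Using the projectivity of the lookdown dynamics --- levels $1,\dots,n$ evolve autonomously of the higher levels --- the finite ordered $n$-system coincides with the first $n$ coordinates of the infinite ordered system $X$, so the right-hand process above is exactly $(Z^n_t)_{t\ge0}$. Hence $(\tilde Z^n_t)_{t\ge0}\stackrel{d}{=}(Z^n_t)_{t\ge0}$ whenever the two are started from the same initial empirical measure.

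The delicate point, and the step I expect to be the main obstacle, is the treatment of the initial data. Theorem~\ref{main} concerns a single infinite ordered system with a fixed exchangeable initial condition whose de Finetti measure is $Z_0$, whereas in the corollary the initial empirical measures $\tilde Z^n_0$ are only assumed to converge weakly to $Z_0$ and need not agree with the initial values of the ordered systems in Theorem~\ref{main}. To bridge this gap I would place all systems on a common probability space: sample an exchangeable array whose $n$-th row has empirical measure $\tilde Z^n_0$ and whose rows converge (by hypothesis) so that the associated infinite sequence has de Finetti measure $Z_0$, couple the ordered $n$-systems to the $Y^n$ through exchangeability-preserving random permutations, and then invoke the almost sure convergence of Theorem~\ref{main} along this coupling. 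Equivalently, one may argue via continuity of the limiting $(\Xi,B)$-Fleming-Viot process in its starting point together with a standard converging-initial-conditions estimate in the Prokhorov metric. Either route reduces the claim to Theorem~\ref{main}, and combining it with the equality in law from the previous step yields the asserted weak convergence of $(\tilde Z^n_t)_{t\ge0}$ to $(Z_t)_{t\ge0}$.
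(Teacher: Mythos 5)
Your proposal follows essentially the same route as the paper: the paper deduces the corollary in a single line from the fact that, under the permutation coupling of Theorem~\ref{exchangeable_fixed_times}, the empirical measures of $X^n$ and $Y^n$ are not merely equal in law but literally identical, so the almost sure pathwise convergence of Theorem~\ref{main} transfers immediately and yields weak convergence. Your extra discussion of matching the initial conditions is more careful than the paper, which glosses over this point via the hypothesis $\tilde Z_0^n \to Z_0$, but it does not alter the approach.
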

The Markov process $(Z_t)_{t\ge 0}$ is characterized by its generator
as follows.
\begin{proposition} \label{mainprop}
   The $(\Xi,B)$-Fleming-Viot process $(Z_t)_{t\ge 0}$ is a strong
   Markov process. Its generator, denoted by $L$, acts on test functions
   of the form
   \begin{equation} \label{eq:testfunction}
      G_f(\mu)\ :=\ \int_{E^n} f(x_1,\dots,x_n)\,\mu^{\otimes n}(dx_1,\dots,dx_n),
      \quad \mu \in \M_1(E),
   \end{equation}
   where $f:E^n\to\R$ is bounded and measurable, via
   \begin{equation}\label{eq_genXiFV}
      L G_f(\mu)\ :=\
      L^{a\delta_{\bf 0}}G_f(\mu) + L^{\Xi_{0}}G_f(\mu) + L^B G_f(\mu),
   \end{equation}
   where
   \begin{align}
      L^{a\delta_{\bf 0}} G_f(\mu)
      & :=  a\sum_{1\le i < j \le n} \int_{E^n}
            \Big(
               f(x_1,\!.., x_i,\!.., x_i,\!.., x_n)- f(x_1,\!.., x_i,\!.., x_j,\!.., x_n)
            \Big)\mu^{\otimes n}(d{\bf x}),\label{eq_genXiFV_kingman}\\
      L^{\Xi_{0}} G_f(\mu)
      & := \int_\Delta \int_{E^\N}
                   \left[ G_f\big( (1-|\bzeta|)\mu + {\textstyle\sum_{i=1}^\infty \zeta_i \delta_{x_i}} \big)
                      - G_f(\mu) \right] \mu^{\otimes \N}(d\mathbf{x})
                   \frac{\Xi_{0}(d\bzeta)}{(\bzeta,\bzeta)}, \label{eq_genXiFV_xi}\\
      L^{B} G_f(\mu)
      & := r\sum_{i=1}^n \int_{E^n} B_i(f(x_1, \dots, x_n)) \mu^{\otimes n}(d{\bf x}),\label{eq_genXiFV_mutation}
   \end{align}
   and $B_i f$ is the mutation operator $B$, defined in (\ref{mutop}),
   acting on the $i$-th coordinate of $f$.
\end{proposition}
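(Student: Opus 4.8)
The plan is to establish three things: that $(Z_t)_{t \ge 0}$ is a strong Markov process, that its generator acts as claimed on test functions $G_f$, and that the three displayed operators correctly decompose the generator according to the Kingman, simultaneous-multiple-collision, and mutation mechanisms. Since Theorem~\ref{main} already gives us $(Z_t)_{t \ge 0}$ as the almost-sure limit of the empirical measures $Z^n_t = \frac 1n\sum_{i=1}^n \delta_{X_i(t)}$ of the ordered lookdown particle system, the natural route is to read off the generator from the infinitesimal dynamics of the underlying $E^\infty$-valued particle system $X = (X_1, X_2, \dots)$ and then pass to the limit in $n$.

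First I would fix a test function of the form $G_f(\mu) = \langle f, \mu^{\otimes n}\rangle$ and analyse how $G_f(Z^n_t)$ changes under each of the three independent driving mechanisms. For the mutation part, each coordinate $X_i$ evolves independently according to the generator $B$ in \eqref{mutop}; by the product rule for the generator of independent motions, applying $B$ coordinate-wise to $f$ and integrating against $\mu^{\otimes n}$ yields exactly \eqref{eq_genXiFV_mutation}, and this passes cleanly to the empirical-measure level. For the Kingman part, driven by the rate-$a$ Poisson clocks $\mathfrak N^K_{i,j}$, a collision at level $(i,j)$ replaces the type at the higher level by the type at the lower level; summing the associated jump $f(\dots, x_i, \dots, x_i, \dots) - f(\dots, x_i, \dots, x_j, \dots)$ over pairs $1 \le i < j \le n$ and integrating against $\mu^{\otimes n}$ produces \eqref{eq_genXiFV_kingman}. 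The essential input here is the exchangeability of the particle system established in the lookdown construction, which guarantees that the empirical measure $\mu$ governs the law of the participating types.

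The $\Xi_0$-part is the crux and requires the most care. Here I would use the Poisson point process $\mathfrak M^{\Xi_0}$ with intensity \eqref{intensity_measure}: at a reproduction event with asymptotic family sizes $\bzeta = (\zeta_1, \zeta_2, \dots)$, the uniform coins $\mathbf u$ assign each level to a family, so that (in the $N \to \infty$ limit) a proportion $\zeta_i$ of the population adopts the type $x_i$ of the $i$-th chosen parent while a proportion $1 - |\bzeta|$ retains its old type drawn from $\mu$. This is precisely the transformation $\mu \mapsto (1-|\bzeta|)\mu + \sum_{i=1}^\infty \zeta_i \delta_{x_i}$ appearing inside \eqref{eq_genXiFV_xi}; integrating the resulting jump of $G_f$ against $\mu^{\otimes\N}(d\mathbf x)$ (for the parental types) and against the intensity $\Xi_0(d\bzeta)/(\bzeta,\bzeta)$ gives the stated operator. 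One should check that this expression is well-defined and bounded despite the singularity $(\bzeta,\bzeta)^{-1}$ near the origin; the factor $(\bzeta,\bzeta)$ in the denominator is exactly compensated because, for $G_f$ with $f$ bounded, the integrand $G_f((1-|\bzeta|)\mu + \sum \zeta_i\delta_{x_i}) - G_f(\mu)$ is of order $(\bzeta,\bzeta)$ as $\bzeta \to \mathbf 0$, so finiteness follows from $\Xi_0$ being a finite measure. A consistency check is to verify that the pure-resampling rates recovered from \eqref{eq_genXiFV_xi} match Schweinsberg's rates \eqref{rates}; this is the bookkeeping identity relating the $\mu^{\otimes\N}$-integral over parental types to the combinatorial sum over $k_1, \dots, k_r$ and the $\binom{s}{l}$-expansion of the singletons.

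Finally, for the strong Markov property and the claim that $L$ as defined is genuinely the generator, I would argue that $(Z_t)$ inherits the (strong) Markov property from the underlying driving Poisson processes and mutation semigroup, since $Z_t$ is a measurable functional of the de Finetti limit of the exchangeable system whose dynamics are Markovian and whose jumps are driven by the Poisson noise. To confirm that $L G_f$ is the infinitesimal generator on the algebra of test functions \eqref{eq:testfunction}, I would show that $G_f(Z_t) - G_f(Z_0) - \int_0^t L G_f(Z_s)\,ds$ is a martingale, by writing the analogous identity for the finite-$n$ empirical process $Z^n$ (where the generator computation above is exact up to $O(1/n)$ correction terms arising from sampling without replacement at low levels) and passing to the limit using the almost-sure path convergence from Theorem~\ref{main} together with the boundedness of $f$. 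The main obstacle is controlling this limit for the $\Xi_0$-part: one must ensure uniform integrability of the jump contributions across levels and verify that the $O(1/n)$ discrepancies between sampling-with- and without-replacement vanish, which again hinges on the singularity cancellation supplied by the $(\bzeta,\bzeta)^{-1}$ weighting.
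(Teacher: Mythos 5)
Your second half---deriving the action of each mechanism from the lookdown dynamics and showing that $G_f(Z_t)-G_f(Z_0)-\int_0^t LG_f(Z_s)\,ds$ is a martingale---is essentially the paper's Lemma~\ref{thm_martingale_problem}, and your identification of the three pieces with \eqref{eq_genXiFV_kingman}--\eqref{eq_genXiFV_mutation}, including the cancellation of the $(\bzeta,\bzeta)^{-1}$ singularity near $\bzeta=\mathbf{0}$, is sound. (One simplification you miss: the paper avoids your $O(1/n)$ sampling-without-replacement corrections entirely, because exchangeability and de Finetti give the \emph{exact} conditional identity $\E[f(X_1(s),\dots,X_l(s))\mid\mathcal{J}_t]=\E[\langle f,Z(s)^{\otimes l}\rangle\mid\mathcal{J}_t]$, see \eqref{f_equals_b}, so the finite-level computation already yields $L^{\Xi_0}G_f(Z(s))$ with no error term and none of the uniform-integrability worries you raise.)

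The genuine gap lies in your first and last claims: neither the strong Markov property nor the assertion that ``its generator is $L$'' follows from the martingale property alone. Your argument that $Z$ ``inherits'' the strong Markov property because it is a measurable functional of a Markovian particle system is not valid---a function of a Markov process need not be Markov, and a solution of a martingale problem need not be Markov either, unless the martingale problem is well-posed. This well-posedness is exactly where the paper invests its main effort, and it is absent from your proposal. Proposition~\ref{prop:semigroup} shows, via the Hille-Yosida theorem, that the closure of $L^{\Xi_0}$ generates a Feller semigroup on $C(\M_1(E))$; the key device is the alternative representation of Lemma~\ref{lemma_generator_alt}, equation \eqref{eq:genXiFV.alt}, which exhibits $L^{\Xi_0}G_f$ as a finite linear combination of functions $\mu\mapsto\int f(\mathbf{x}[\pi])\,\mu^{\otimes n}(d\mathbf{x})$, $\pi\in\mathcal{P}_n$. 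Hence $L^{\Xi_0}$ maps each such finite-dimensional subspace into itself, so the range condition $(\lambda-L^{\Xi_0})(D)$ dense reduces to finite-dimensional linear algebra (all but countably many $\lambda>0$ avoid the spectra), while the positive maximum principle and Stone--Weierstrass supply the remaining Hille-Yosida hypotheses; the Kingman and mutation parts are then added by a Trotter argument (Remark~\ref{remark_generator}). Only with this semigroup in hand does your martingale computation identify the law of the lookdown-constructed $Z$ with that of the Feller process, yielding both the strong Markov property and the generator identification. Without some such input---Hille-Yosida as in the paper, or uniqueness via duality---your argument establishes only that $Z$ solves the martingale problem for $L$, which is strictly weaker than Proposition~\ref{mainprop}.
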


\begin{remark}{\rm
1) In \cite{DK99}, Donnelly \& Kurtz established a construction and
pathwise duality for the $\Lambda$-Fleming-Viot process. In some
sense, their paper works under the general assumption ``allow
simultaneous and/or multiple births and deaths, but we assume that all
the births that happen simultaneously come from the same parent'' (p.~166),
even though they very briefly in Section~2.5 mention a possible extension
to scenarios with simultaneous multiple births to multiple parents.
In essence, the present paper converts these ideas into theorems.

2) Note that in a similar direction, Bertoin \& Le Gall remark briefly on p.~277 of
\cite{BLG03} how their construction of the $\Lambda$-Fleming-Viot process via
flows of bridges can be extended to the simultaneous multiple merger context
(but leave details to the interested reader). We are not following this
approach, as it is hard to combine with a general type space and general mutation
process.

3) The $\Xi$-Fleming-Viot process has recently been independently
   constructed by Taylor and V\'eber (personal communication, 2008)
   via Bertoin and Le Gall's flow
   of bridges (see \cite{BLG03}) and Kurtz and Rodriguez' Poisson
   representation of measure-valued branching processes (see
   \cite{KR08}). In this context we refer to Taylor and V\'eber \cite{TV08}
   for a larger study of structured populations, in which $\Xi$-coalescents
   appear under certain limiting scenarios.

4) Note that the modified lookdown construction of the $\Lambda$-Fleming-Viot
   process contains all information available about the genealogy of the
   process and therefore also provides a pathwise embedding of the
   {\em $\Lambda$-coalescent measure tree} considered by Greven,
   Pfaffelhuber and Winter \cite{GPW07}. A similar statement holds for the $\Xi$-coalescent.}
\end{remark}

The rest of the paper is organised as follows: In
Section~\ref{exchangeable} we use the Poisson point process
$\mathfrak{M}^{\Xi_0}$ to introduce the finite unordered
$(\Xi,B)$-Moran model $Y^N$ and the finite ordered $(\Xi,B)$-lookdown
model $X^N$. It is shown that the ordered model is constructed in such
a way that we can let $N$ tend to infinity and obtain a well defined
limit. We will also show that the reordering preserves the
exchangeability property, which will be crucial for the proof in
Section~\ref{tightness}. In this section, we will introduce the
empirical measures of the process $Y^N$ and $X^N$, show that they are
identical and converge to a limiting process having nice path
properties, which is the statement of Theorem~\ref{main}.

Section~\ref{markov_semigroup} will be concerned with the generator of
the $\Xi_{0}$-Fleming-Viot process. We will give two alternative
representations and show that it generates a strongly continuous
Feller semigroup. Furthermore, we will show that the process
constructed in Section~\ref{tightness} solves the martingale problem
for this generator.

One representation of the generator will then be used in
Section~\ref{dualities} to establish a functional duality between the
$\Xi$-coalescent and the $\Xi$-Fleming-Viot process on the
genealogical level. Due to the Poissonian construction, this duality
can also be extended to a ``pathwise'' duality. We will also give a
function-valued dual, which incorporates mutation.

In Section~\ref{examples}, we look at two examples: The first example
is concerned with a population model with recurrent bottlenecks. Here,
a particular $\Xi$-coalescent, which is a subordination of Kingman's
coalescent, arises as a natural limit of the genealogical process. The
second example discusses the Poisson-Dirichlet-coalescent and obtains
explicit expressions for some quantities of interest.

\section{Exchangeable $E^\infty$-valued particle systems}\label{exchangeable}

\subsection{The canonical $(\Xi,B)$-Moran model}\label{canon_moran_modell}

We can use the Poisson process from Section~\ref{Erhard} governing the
$\Xi$-coalescent to describe a corresponding forward population model
in a canonical way, simply reversing the construction of the coalescent
by interpreting the merging events as birth events.

Consider the points
\begin{equation}
    (t_m,{\bzeta}_m,{\bf u}_m)\ =\
    (t_m,(\zeta_{m1},\zeta_{m2},\ldots),(u_{m1},u_{m2},\ldots))
\end{equation}
of $\mathfrak{M}^{\Xi_0}$ defined by (\ref{intensity_measure}).
The $t_m$ denote the times of reproduction events. Define
\begin{equation} \label{eq_definition_g}
   g(\bzeta,u)\ :=\
   \begin{cases}
      \min\{j\,|\,\zeta_1+\cdots+\zeta_j\ge u\}
             & \text{if }u \le\sum_{i\in\N}\zeta_i,\\
      \infty & \text{else}.
   \end{cases}
\end{equation}
At time $t_m$, the $N$ particles are grouped according to the values
$g({\bzeta}_m, u_{ml})$, $l=1,\dots,N$ as follows: For each $k \in \N$,
all particles $l \in \{1,\dots,N\}$ with $g({\bzeta}_m, u_{ml})=k$ form a family.
Among each non-trivial family we uniformly pick a `parent' and change the others'
types accordingly. Note that although the jump times $(t_m)$ may be dense in
$\R_+$, the condition
\[
\int_\Delta \sum_i\zeta_i^2\,\frac{\Xi_0(d{\bzeta})}{({\bzeta},{\bzeta})}
\ =\ \Xi(\Delta)\ <\ \infty
\]
guarantees that in a finite population, in each finite time interval only
finitely many non-trivial reproduction events occur. As above, each particle
follows an independent mutation process, according to \eqref{mutop},
in between reproductive events.

We describe the population corresponding to the $N$-particle $(\Xi,B)$-Moran
model at time $t\ge 0$ by a random vector
\begin{equation}
   Y^N(t)\ :=\ (Y^N_1(t),\ldots,Y^N_N(t))
\end{equation}
taking values in $E^N$.

\begin{remark}
  Note that this model is completely symmetric, thus, for each $t$, the
  population vector $Y^N(t)$ is exchangeable if $Y^N(0)$ is
  exchangeable.
\end{remark}

\subsection{The ordered model and exchangeability}
\label{orderedmodel}

We now define an ordered population model with the same family size
distribution, extending the ideas of Donnelly and Kurtz \cite{DK99} in
an obvious way.  This time each particle will be attached a ``level''
from $\{1,2,\dots\}$ in such a way that we obtain a nested coupling of
approximating $(\Xi,B)$-Moran models as $N$ tends to infinity. It will
be crucial to show that this ordered model retains initial
exchangeability, so that the limit as $N\to\infty$ of the empirical
measures of the particle systems, at each fixed time, exists by De
Finetti's theorem.

We will refer to this model as the the $(\Xi,B)$-lookdown-model. If
the population size is $N$, it will be described at time $t$ by the
$E^N$-valued random vector
\begin{equation}
\label{loddef}
    X^N(t)\ :=\ (X^N_1(t),\ldots,X^N_N(t)).
\end{equation}
The dynamics works as in the $(\Xi,B)$-Moran model above, including
the distribution of family sizes and the mutation processes for each
particle.

In each reproduction step, for each family, a ``parental'' particle will
be chosen, that then superimposes its type upon its family. This time,
however, the parental particle will not be chosen uniformly among the
members of each family (as in the $(\Xi,B)$-Moran model). Instead,
the parental particle will always be the particle with the lowest
level among the members of a family (hence each family member ``looks
down'' to their relative with the lowest level).
The attachment of types to levels is then rearranged as follows (see
Figure~\ref{fig_reproduction} for an illustration):
\label{items:atoc}
\begin{itemize}
\item[a)] All parental particles of all families (including the trivial ones) will retain their type and level.
\item[b)] All levels of members of families will assume the type of their respective parental particle.
\item[c)] All levels which are still vacant will assume the pre-reproduction types of non-parental particles
      retaining their initial order. Once all $N$ levels are filled, the remaining types will be lost.
\end{itemize}

\begin{figure}
\begin{center}
    \setcounter{subfigure}{0}
    \subfigure[Parental particles retain type and level.]{
        \centering\includegraphics[scale=1.2]{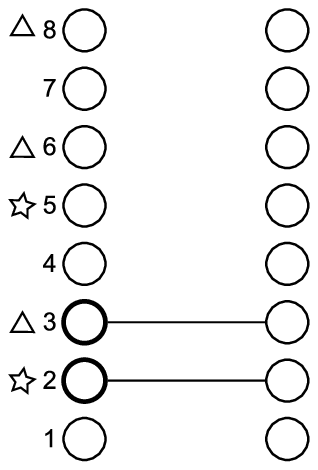}
    }
    \qquad
    \subfigure[Family members copy type of parental particle.]{
        \centering\includegraphics[scale=1.2]{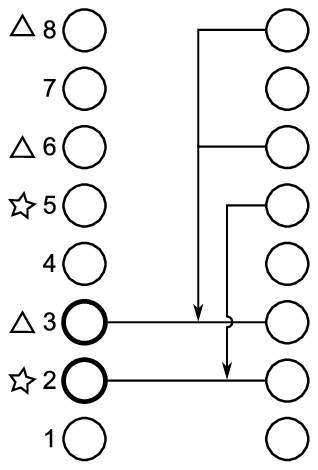}
    }
    \qquad
    \subfigure[Remaining particles retain their order and surplus particles get killed.]{
        \centering\includegraphics[scale=1.2]{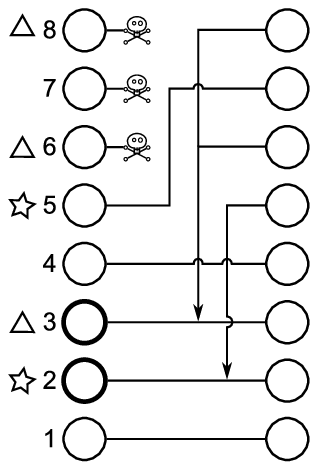}
    }
    \caption{An illustration of the reproduction mechanism in the
      $(\Xi,B)$-lookdown model. The particles at levels 2 and 5 belong
      to the ``star'' family, whereas the particles at levels 3, 6 and 8
      belong to the ``triangle'' family. The particles on the remaining
      levels belong to no family.  }\label{fig_reproduction}
\end{center}
\end{figure}

In this way, the dynamics of a
particle, at level $l$, say, will only depend on the dynamics of the
particles with {\em lower} levels. This consistency property allows to
construct all approximating particle systems, as well as their limit
as $N \to \infty$, {\em on the same probability space.}

Exchangeability of the modified $(\Xi,B)$-lookdown model is crucial in
order to pass to the De Finetti limit of the associated empirical
particle systems.  For each $N$, we will show that if $X(0)$ is
exchangeable, then $X$ is exchangeable at fixed times and at stopping
times.  The proof will rely on an explicit construction of uniform
random permutations $\Theta(t)$ which maps $X^N$ to $Y^N$.
\begin{theorem}\label{exchangeable_fixed_times}
  If the initial distribution of the population vector
  $(X^N_1(0),\dots,X^N_N(0))$ in the $(\Xi,B)$-lookdown-model is
  exchangeable, then $(X^N_1(t),\dots,X^N_N(t))$ is exchangeable for
  each $t\ge 0$.
\end{theorem}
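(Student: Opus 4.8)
The plan is to prove exchangeability at fixed times by exhibiting, for each $t\ge 0$, a random permutation $\Theta(t)$ of $\{1,\dots,N\}$ such that applying $\Theta(t)$ to the ordered lookdown vector $X^N(t)$ recovers (in distribution) the unordered Moran vector $Y^N(t)$, and such that, conditionally on the driving data, $\Theta(t)$ is uniformly distributed on the symmetric group $S_N$. Since the unordered $(\Xi,B)$-Moran model $Y^N(t)$ is manifestly exchangeable whenever $Y^N(0)$ is (as already remarked in the excerpt), and since a uniformly random permutation of any vector is exchangeable, the key is to transport this symmetry from $Y^N$ back to $X^N$. Because $X^N$ and $Y^N$ are built from the \emph{same} driving Poisson process $\mathfrak{M}^{\Xi_0}$ and the same mutation processes, differing only in the deterministic bookkeeping rule (lowest-level parent plus the reordering a)--c)) versus the uniform-parent rule, the two systems should be coupled so that $X^N(t) = \Theta(t)^{-1}\cdot Y^N(t)$ for a suitable permutation-valued process $\Theta(t)$.

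\emph{First} I would set up the coupling at the level of a single reproduction event. Fix a time $t_m$ at which the lookdown model performs a grouping according to the values $g(\bzeta_m,u_{ml})$, and suppose inductively that just before $t_m$ the identity $X^N(t_m^-)=\Theta(t_m^-)^{-1}\cdot Y^N(t_m^-)$ holds with $\Theta(t_m^-)$ uniform on $S_N$ and independent of the pre-event configuration. The lookdown rule selects as parent the lowest-level member of each family and then reshuffles the surviving non-parental types upward by the rule c), whereas the Moran rule picks a uniform parent within each family. I would show that the \emph{composition} of the lookdown update with a fresh uniform re-randomization of the labels produces exactly the Moran update composed with a uniform permutation: that is, I would construct the updated $\Theta(t_m)$ explicitly as the composition of $\Theta(t_m^-)$ with a permutation encoding (i) which family member the Moran model would have chosen as parent (uniform within each family) and (ii) the deterministic level-rearrangement performed by the lookdown model. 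The content of the statement is that, because families are formed by i.i.d.\ uniform coins $u_{ml}$ and parents are chosen uniformly, the lowest-level-parent convention combined with order-preserving promotion of survivors does not bias the resulting label distribution away from uniform.

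\emph{Second}, I would iterate this one-step argument across the finitely many reproduction events in $[0,t]$ — finiteness being guaranteed for finite $N$ by the integrability condition $\int_\Delta\sum_i\zeta_i^2\,\Xi_0(d\bzeta)/(\bzeta,\bzeta)=\Xi(\Delta)<\infty$ quoted in the excerpt — and check that mutation, acting coordinatewise through the same generator $B$ on both systems, commutes with the permutation action and hence preserves the coupling identity between events. Concretely, if the types are relabelled by $\Theta$ and each coordinate then mutates independently according to \eqref{mutop}, the mutated relabelled vector has the same law as the relabelled mutated vector, so mutation introduces no asymmetry. Stringing the updates together yields a single random permutation $\Theta(t)$, uniform on $S_N$, with $X^N(t)=\Theta(t)^{-1}\cdot Y^N(t)$; exchangeability of $X^N(t)$ then follows because $Y^N(t)$ is exchangeable and $\Theta(t)$ is uniform and independent of the unordered configuration.

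The main obstacle I anticipate is the verification, at the single-event level, that the deterministic lookdown rearrangement composed with the uniform choice of Moran parent genuinely yields a \emph{uniform} permutation — in other words, that steps a)--c) do not leak information about the types into the labels. The delicate point is the interaction between the several simultaneously reproducing families and the order-preserving promotion of surviving bystanders in rule c): one must confirm that conditioning on the family partition (which is itself symmetric in the labels, since the $u_{ml}$ are i.i.d.\ and the labels enter only through $\Theta(t_m^-)$) leaves the induced parent-and-reshuffle permutation uniform. I would handle this by writing the one-step label transformation as an explicit measurable function of the family partition and the independent uniform parent-choices, and then checking that its law, averaged over a uniform $\Theta(t_m^-)$, is again Haar measure on $S_N$ — a computation that reduces to the fact that composing any fixed permutation with an independent uniform permutation is uniform.
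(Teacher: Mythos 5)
Your proposal is correct and follows essentially the same route as the paper: the paper likewise constructs a permutation-valued process $\Theta(t)$ (via its skeleton chain $\theta_m$, updated at each reproduction event using the lookdown family structure together with fresh independent uniform within-family permutations), proves inductively that $\theta_m$ stays uniform on $S_N$ and independent of the Moran-side family data (Lemma~\ref{distributed}), and concludes exchangeability of $X^N(t)=Y_{\Theta^{-1}(t)}(t)$ exactly as you outline. The single-event uniformity verification you flag as the main obstacle is precisely the content of the paper's Lemma~\ref{distributed}, resolved there by the same argument you propose.
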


For the rest of this section, we omit the superscript $N$ for the population models
in an attempt not to get \emph{lost in notation}.

The proof of Theorem~\ref{exchangeable_fixed_times} follows that of
Theorem~3.2 in \cite{DK99}.
We will construct a
coupling via a permutation-valued process $\Theta(t)$ such that
\begin{equation}\label{models_equal1}
   (Y_{1}(t),\dots,Y_N(t))=(X_{\Theta_{1}(t)}(t),\dots,X_{\Theta_N(t)}(t))
\end{equation}
and $\Theta(t)$ is uniformly distributed on all permutations of $\{1,\dots,N\}$
for each $t$ and independent of the empirical process up to time $t$ and
the ``demographic information'' in the model (see \eqref{eq_xi_filtration}
for a precise definition).

It suffices to construct the skeleton chain
$(\theta_m)_{m \in \N_0}$ of $\Theta$.
As a guide through the following notation, we have found it useful to occasionally
remember that $\Theta(t)$ (and its skeleton chain) is built to the following aim:
{\em \begin{center}
\begin{minipage}{.85\textwidth}\begin{center}
     $\Theta$ maps a position of an individual in the vector $Y$
     ($(\Xi,B)$-Moran-model) to the level of the corresponding individual in the
     ordered vector $X$ ($(\Xi,B)$-lookdown-model).\end{center}
   \end{minipage}
\end{center}}

\paragraph{Notation and ingredients}
For $N>0$ let $S_{N}$ denote the collection of all permutations
of $\{1,\dots,N\}$, let $P_{N}=\mathcal{P} (\{1,\dots,N\})$,
the set of all subsets of $\{1,\dots,N\}$, and
let $P_{N,k}\subset P_{N}$  be the subcollection of subsets with
cardinality $k$. For a set $M$, $M(i)$  will denote the $i$th
largest element in $M$.

At time $m$ (for the skeleton chain) let $c_m$ the total number of children.
Let $a_m$ be the number of families and $c_m^i$ the number of children born
to family $i$, hence
\begin{equation}\label{sum_km}
   \sum_{i=1}^{a_m} c^i_m = c_m.
\end{equation}

Note that we allow $c_m^i = 0$ for some, but not all $i$. These are
the trivial families where only the parental particle is below level $N$ and all
potential children are above.  Furthermore, we
need to keep track of these ``one-member families'' in order to match the
rates of our model to those of the $\Xi$-coalescent later on.

Let $\theta_{0}$ be uniformly distributed over $S_N$.
For each $m \in \N$, pick (independently, and independent of $\theta_0$)
\begin{itemize}
\item $\Phi_{m}$ a random set, uniformly chosen from $P_{N,c_m+a_m}$,
\item $\big(\phi_{m}^{1},\dots,\phi_{m}^{a_{m}}\big)$ a random (ordered) partition of
$\Phi_m$, such that each $\phi_m^i$ has size $c_m^i+1$,
\item $\sigma_m^i$, $i=1,\dots,a_m$ random permutations, each
$\sigma_{m}^i$ uniformly distributed over $S_{c_m^i + 1}$,
independently of $\Phi_m$ and the $\phi_m^i$.
\end{itemize}
Denote
\begin{itemize}
\item $\mu_{m}^{i}:=\min\phi_{m}^{i}, \; i \in \{1,\dots,a_{m}\}$, \;\;\mbox{and}
\item write $\Delta_m$ for the set of the highest $c_m$ integers from
  $\{1,\ldots,N\}\setminus\bigcup_{i=1}^{a_m}\mu_m^i$.
\end{itemize}

Proceeding inductively we assume that $\theta_{m-1}$ has already been
defined. We then construct $\theta_m$ as follows: Let
\begin{itemize}
\item $\nu_{m}^{i}:=\theta_{m-1}^{-1}(\mu_{m}^{i})$,
\item $\psi_m := \theta_{m-1}^{-1}(\Delta_m), \quad \mbox{and}$
\item a random ordered ``partition'' $\big(\psi_{m}^{1},\dots,\psi_{m}^{a_{m}}\big)$
of $\psi_m$ such that $|\psi_m^i|=c^i_m$, chosen independently of everything else.
\end{itemize}

In view of our intended application of $\theta_m$ to transfer from the
Moran model to the lookdown model, we will later on interpret these
quantities as follows:
In the $m$-th event, $\mu_m^i$ will be the level of the parental particle of family
$i$ in the lookdown-model, and $\nu_m^i$ will be the corresponding
index in the (unordered) Moran model.
$\Delta_m$ will specify the levels in the lookdown-model at which individuals die.
We do not just pick the highest $c_m$ levels, because we wish to retain parental particles.
$\psi_m$ will be the corresponding indices in the Moran model.
$\big(\phi_{m}^{1},\dots,\phi_{m}^{a_{m}}\big)$ describes the family decomposition
(including the respective parents) in this event in the lookdown model,
and
$\psi_m^i$ are the indices of the children in the $i$-th family in the Moran model.
Thus, $\theta_m$ will map $\phi_m^i$ to $\psi_m^i \cup \{\nu_m^i\}$ (in a particular
order).
\smallskip

Finally, define $\theta_m$ as follows:
Put $\Psi_m := \{\nu_m^1,\dots,\nu_m^{a_m}\} \cup \psi_m$. On $\Psi_m$,
\begin{equation}\label{construction_theta_m_1}
   \theta_{m}(\nu_{m}^{i}) := \phi_{m}^{i}(\sigma_m^i(1)), \; i=1,\dots,a_m,
\end{equation}
and
\begin{equation}\label{construction_theta_m_2}
   \theta_{m}(\psi_m^i(j)) := \phi_{m}^{i}(\sigma_m^i(j+1)) \quad \forall j \in \{1,\ldots,c_m^i\}
\end{equation}
for each $i\in\{1,\ldots,a_m\}$ with $c_m^i \neq 0$. On
$\{1,\dots,N\}\setminus\Psi_m$ let $\theta_m$ be the mapping onto
$\{1,\dots,N\}\setminus\Phi_m$ with the same order as $\theta_{m-1}$
restricted to $\{1,\dots,N\}\setminus\Psi_m$, that is, whenever
$\theta_{m-1}(i)<\theta_{m-1}(j)$ for some $i,j\in\{1,\dots,N\}
\setminus\Psi_m$, then $\theta_m(i)<\theta_m(j)$ should also hold.
\smallskip

\begin{example}\label{ex_permutation}

We consider a realisation of the $m$-th event of a population of size $N=8$,
as illustrated in Figure~\ref{fig_reproduction}.
There are $a_m=2$ families (depicted by ``triangle'' and ``star'', respectively).
The first family $\phi_m^1=\{3,6,8\}$ has size $c_m^1+1=3$, the second, $\phi_{m}^2=\{2,5\}$, has size $c_m^2+1=2$.
Hence, the set of levels involved in this birth event is $\Phi_m=\{2,3,5,6,8\}$,
and $\mu_{m}^1=3$, $\mu_m^2=2$ are the levels of the parental particles.
Since there is no parental particle among the highest three levels, the particles
at levels $\Delta_m=\{6,7,8\}$ ``die''.

Now let us assume that $\theta_{m-1}$ is as given in Figure~\ref{theta_m-1}.
Thus, $\nu_m^1=4$, $\nu_m^2=1$, $\psi_m=\{3,5,7\}$.
The set of indices $\psi_m$ of individuals in the Moran model who will get replaced
by offspring in this event is partitioned according to the family sizes,
for example let $\psi_m^1=\{3,7\}$ and $\psi_m^2=\{5\}$.

We construct $\theta_m$ as follows: Let
$\sigma_m^1={ 1\: 2\: 3 \choose 3\: 1\: 2}$
and $\sigma_m^2={ 1\: 2 \choose 2\: 1}$.
For the restriction of $\theta_m$ to $\Psi_m=\{1,3,4,5,7\}$, we read from
\eqref{construction_theta_m_1} that $\theta_m(4)=\phi_m^1(3)=8$, $\theta_m(1)=\phi_m^2(2)=5$
and from \eqref{construction_theta_m_2} that
$\theta_m(3)=\theta_m(\psi_m^1(1))=\phi_m^1(\sigma_m^1(1+1))=\phi_m^1(1)=3$,
$\theta_m(7)=\theta_m(\psi_m^1(2))=\phi_m^1(\sigma_m^1(2+1))=\phi_m^1(2)=6$ and
$\theta_m(5)=\theta_m(\psi_m^2(1))=\phi_m^2(\sigma_m^2(1+1))=\phi_m^2(1)=2$.
This leads to the partial permutation which is given in Figure~\ref{add_families}.

Restricted to the complementary set $\{2,6,8\}$, $\theta_m$ is a
mapping onto $\{1,4,7\}$ with the same order as $\theta_{m-1}$
restricted to $\{2,6,8\}$. The resulting permutation $\theta_m$ is given in
Figure~\ref{theta_m}.\hfill$\blacksquare$

\begin{figure}
\begin{center}
    \setcounter{subfigure}{0}
    \subfigure[Initial permutation $\theta_{m-1}$]{
        \centering\includegraphics[scale=1.2]{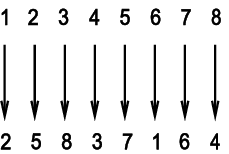}
        \label{theta_m-1}
    }
    \qquad\qquad
    \subfigure[The families are added]{
        \centering\includegraphics[scale=1.2]{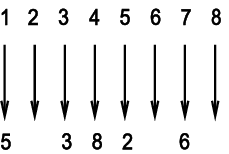}
        \label{add_families}
    }
    \qquad\qquad
    \subfigure[\ The completed permutation in Example \ref{ex_permutation}]{
        \centering\includegraphics[scale=1.2]{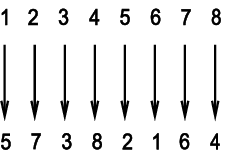}
        \label{theta_m}
    }
    \caption{The construction of the new permutation from the old permutation carried out in Example~\ref{ex_permutation}}
\end{center}
\end{figure}
\end{example}

For notational convenience, let
\begin{equation}
   \chi_m := (\nu_m^1,\psi_m^1,\ldots,\nu_m^{a_m},\psi_m^{a_m}),
\end{equation}
which summarises the combinatorial information generated in the $m$-th step
(namely, the family structure we would observe in the Moran model).

\begin{lemma}\label{distributed}
For each $m$, $\chi_1,\ldots,\chi_m,\theta_m$ are independent.
Furthermore $\theta_m$ is uniformly distributed over $S_N$ and
\begin{equation}
   \Upsilon_m := \bigcup_{i=1}^{a_m} \{\nu_m^i\} \cup \psi_m^i
\end{equation}
is uniformly distributed over $P_{N,c_m+a_m}$, and each $\chi_m$ is, given $\Upsilon_m$,
uniformly distributed on all ordered partitions of $\Upsilon_m$ with
family sizes consistent with the $c_m^i$.
\end{lemma}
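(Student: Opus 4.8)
The plan is to argue by induction on $m$, the base case $m=0$ being the assumption that $\theta_0$ is uniform on $S_N$. So assume the assertion holds for $m-1$, i.e.\ that $\chi_1,\dots,\chi_{m-1},\theta_{m-1}$ are independent and $\theta_{m-1}$ is uniform on $S_N$. Throughout we condition on the ``demographic'' data $a_m$ and $c_m^1,\dots,c_m^{a_m}$ (equivalently on $c_m$), which are determined by the driving point process and are not part of $\chi_m$ or $\theta_m$. The first, essentially formal, observation is that the pair $(\chi_m,\theta_m)$ is a measurable function of $\theta_{m-1}$ together with the fresh randomness $F_m:=(\Phi_m,(\phi_m^i),(\sigma_m^i))$ and the independent source used to split $\psi_m$ into $(\psi_m^1,\dots,\psi_m^{a_m})$. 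Since $F_m$ and this source are independent of $\sigma(\theta_0,F_1,\dots,F_{m-1})$, and $\theta_{m-1}$ is independent of $(\chi_1,\dots,\chi_{m-1})$ by the induction hypothesis, a routine factorisation shows that $(\theta_{m-1},F_m,\text{source})$ is independent of $(\chi_1,\dots,\chi_{m-1})$, and hence so is $(\chi_m,\theta_m)$. Combined with the induction hypothesis (and with $\chi_m\perp\theta_m$, proved below) this yields the joint independence of $\chi_1,\dots,\chi_m,\theta_m$. It therefore remains to identify the joint law of $(\chi_m,\theta_m)$ and to show that $\theta_m$ is uniform and independent of $\chi_m$, with $\chi_m$ distributed as claimed.

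The key step is a change-of-variables / counting argument. First I would record that the output $(\chi_m,\theta_m)$ already determines $(\Phi_m,(\phi_m^i))$: indeed $\theta_m$ maps $\{\nu_m^i\}\cup\psi_m^i$ onto $\phi_m^i$ (this is exactly what \eqref{construction_theta_m_1} and \eqref{construction_theta_m_2} say once $\sigma_m^i$ ranges over $S_{c_m^i+1}$, the trivial families $c_m^i=0$ being included), so $\phi_m^i=\theta_m(\{\nu_m^i\}\cup\psi_m^i)$ and $\Phi_m=\theta_m(\Upsilon_m)$. Fixing $(\Phi_m,(\phi_m^i))$ (whence $\mu_m^i=\min\phi_m^i$ and the set $\Delta_m$ are all fixed), consider the map
\[
T:\ \big(\theta_{m-1},(\sigma_m^i),(\psi_m^i)\big)\ \longmapsto\ (\chi_m,\theta_m).
\]
From $\chi_m$ one reads off $(\psi_m^i)$ and $\Upsilon_m=\Psi_m$; from $\theta_m$ restricted to $\Psi_m$ together with $(\phi_m^i)$ one recovers each $\sigma_m^i$ uniquely; and $\theta_{m-1}$ is recovered on $\{\nu_m^i\}$ (by $\theta_{m-1}(\nu_m^i)=\mu_m^i$) and, via the order-preservation clause in the definition of $\theta_m$, uniquely on $\{1,\dots,N\}\setminus\Psi_m$. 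The only information about $\theta_{m-1}$ not recorded in the output is the internal bijection $\psi_m\to\Delta_m$, which affects neither $\chi_m$ nor $\theta_m$; hence $T$ is exactly $c_m!$-to-one.

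Finally I would carry out the count. For fixed $(\Phi_m,(\phi_m^i))$ each admissible input has probability $\tfrac{1}{N!}\cdot\tfrac{1}{\prod_i(c_m^i+1)!}\cdot\tfrac{\prod_i c_m^i!}{c_m!}$, the three factors coming from $\theta_{m-1}$, the $\sigma_m^i$, and the uniform splitting of $\psi_m$ (whose number of outcomes depends only on the fixed sizes $c_m^i$). Multiplying by the $c_m!$ preimages and by the probability $\tfrac{(N-a_m-c_m)!\,\prod_i(c_m^i+1)!}{N!}$ of the (unique) consistent $(\Phi_m,(\phi_m^i))$, the factors $\prod_i(c_m^i+1)!$ and $c_m!$ cancel and one obtains
\[
\P(\chi_m=\bar\chi,\ \theta_m=\bar\theta)\ =\ \frac{1}{N!}\cdot\frac{(N-a_m-c_m)!\,\prod_i c_m^i!}{N!}
\]
for every $\bar\theta\in S_N$ and every admissible $\bar\chi$. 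The right-hand side factorises into a term $1/N!$ depending only on $\bar\theta$ and a term depending only on $\bar\chi$; this simultaneously gives that $\theta_m$ is uniform on $S_N$, that $\theta_m$ is independent of $\chi_m$, and (rewriting $(N-a_m-c_m)!\prod_i c_m^i!/N! = \binom{N}{a_m+c_m}^{-1}\cdot\prod_i c_m^i!/(a_m+c_m)!$) that $\Upsilon_m$ is uniform on $P_{N,c_m+a_m}$ with $\chi_m$ a uniform ordered partition of $\Upsilon_m$ given $\Upsilon_m$. I expect the main obstacle to be the bookkeeping in the preimage count, specifically verifying cleanly that the internal $\psi_m\to\Delta_m$ bijection is the only degree of freedom lost by $T$ (so that the multiplicity is precisely $c_m!$) and checking that the normalising constants combine exactly as above; the rest is a standard propagation of uniformity through the induction.
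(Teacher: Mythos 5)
Your proof is correct, and its skeleton --- induction on $m$, with a fresh-randomness factorisation reducing everything to the single-step claim that $\theta_m$ is uniform on $S_N$ and independent of $\chi_m$ --- is the same as the paper's. Where you genuinely differ is in how that core claim is established. The paper argues softly with conditional distributions: given $\chi_m$ and $\Phi_m$, it asserts that $\theta_m$ is uniform over the permutations mapping $\Upsilon_m$ onto $\Phi_m$ (because the $\sigma_m^i$ are uniform and the non-participating levels ``remain uniformly distributed''), then notes that conditioning on $\chi_m$ does not disturb the uniformity of $\Phi_m$ on $P_{N,c_m+a_m}$, and integrates out $\Phi_m$ to get $\mathcal{L}(\theta_m\mid\chi_m)=\mathrm{unif}(S_N)$, hence independence. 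You instead compute the exact joint law of $(\chi_m,\theta_m)$ by a change of variables, and your bookkeeping checks out: the output does determine $(\Phi_m,(\phi_m^i))$ via $\phi_m^i=\theta_m(\{\nu_m^i\}\cup\psi_m^i)$; the multiplicity of $T$ is exactly $c_m!$, since $\theta_{m-1}$ is pinned down on $\{\nu_m^i\}$ by $\theta_{m-1}(\nu_m^i)=\mu_m^i$ and on $\{1,\dots,N\}\setminus\Psi_m$ by the order-preservation clause together with the knowledge of its range $\{1,\dots,N\}\setminus(\{\mu_m^i\}_i\cup\Delta_m)$ there, leaving only the internal bijection $\psi_m\to\Delta_m$ free; and the factors $\prod_i(c_m^i+1)!$ and $c_m!$ cancel as you claim, giving the constant value $\frac{1}{N!}\cdot\frac{(N-a_m-c_m)!\prod_i c_m^i!}{N!}$ on a product support, which indeed sums to $1$ over the $N!\cdot N!/\big((N-a_m-c_m)!\prod_i c_m^i!\big)$ admissible pairs. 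Your route costs more counting but buys an explicit joint law with a built-in normalisation check, and it delivers uniformity of $\theta_m$, independence of $\chi_m$, uniformity of $\Upsilon_m$, and the conditional law of $\chi_m$ in one stroke; in particular it makes rigorous precisely the step the paper leaves at the level of an assertion. The paper's route buys brevity and the conceptual message that uniformity propagates stage by stage through the construction. Note that both arguments rest on the same implicit reading of the construction, namely that the ordered partition $(\phi_m^i)$ of $\Phi_m$ and the split of $\psi_m$ into $(\psi_m^1,\dots,\psi_m^{a_m})$ are chosen uniformly.
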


\begin{proof} We prove the statement by induction.
Denoting $\mathcal{F}_m = \sigma(\theta_k, \chi_k: 0 \le k \le m)$, we have
\begin{equation}\label{dependence_through_theta}
   \E[f(\theta_m,\chi_m)\mid \mathcal{F}_{m-1}]=\E[f(\theta_m,\chi_m)\mid \theta_{m-1}],
\end{equation}
since $\theta_m$ and $\chi_m$ are only based on $\theta_{m-1}$ and
additional independent random structure.

This implies, for any choice of $f\colon S_n \to \R$ and
$h_k \colon \cup_{n=1}^N \big(\{1,\dots,N\} \times \mathcal{P}(\{1,\dots,N\})\big)^n \to \R$,
\begin{eqnarray*}
  \E\left[f(\theta_m)\prod_{k=1}^{m}h_k({\chi_k})\right]  & =
  &  \E\left[\E[f(\theta_m)h_m({\chi_m})\mid\mathcal{F}_{m-1}]\prod_{k=1}^{m-1}h_k({\chi_k})\right]\\
  & = & \E\left[\E[f(\theta_m)h_m({\chi_m})\mid\theta_{m-1}]\prod_{k=1}^{m-1}h_k({\chi_k})\right]\\
  & = &  \E[f(\theta_m)h_m({\chi_m})]\prod_{k=1}^{m-1}\E[h_k({\chi_k})],
\end{eqnarray*}
where we used \eqref{dependence_through_theta} in the second and the induction
hypothesis in the third equality. It remains to show
that $\theta_m$ and $\chi_m$ are independent and have the correct distributions.

$\theta_{m-1}$ is uniformly distributed by the induction hypothesis and
independent of the distributions of the parental-levels $\mu_m^i$ and
the ``death-levels'' $\Delta_m$ by construction. It is immediate from the
construction that $\Phi_m$ and $\Upsilon_m$ are uniformly distributed over $P_{N,c_m+a_m}$ and
the family structure $\chi_m$ is uniformly distributed among all
admissible configurations.

Furthermore, conditioning on $\chi_m$ and $\Phi_m$, $\theta_m$ is
uniformly distributed over all permutations that map $\Upsilon_m$
onto $\Phi_m$. This follows from the fact that $\Phi_m$ is uniform
on $P_{N,c_m+a_m}$ and that this set is uniformly divided into
the families $\phi_m^i$. Since uniform and independent
permutations $\sigma_m^i$ are used for the construction of
$\theta_m$ and the non-participating levels remain uniformly distributed,
$\theta_m$ is uniform under these conditions.

Finally, conditioning on $\chi_m$ does not alter the fact that $\Phi_m$ is
uniformly distributed over
$P_{N,c_m+a_m}$. This implies that given $\chi_m$, $\theta_m$ is also
uniformly distributed over $S_N$. Since
\begin{equation}
   \mathcal{L}(\theta_m\vert\chi_m) = \text{unif}(S_N) =
   \mathcal{L}(\theta_m),
\end{equation}
$\theta_m$ and $\chi_m$ are independent of each other.
\end{proof}

\begin{proof}[Proof of Theorem \ref{exchangeable_fixed_times}]
Suppose a realization $X$ of the $N$-particle $(\Xi,B)$-lookdown-model is given and let
$\{t_m\}$ denote the times at which the birth events occur. The families involved
in the $m$-th birth event are denoted by $\phi^i_m$.
Note that by definition of the lookdown-dynamics, the ``ingredients''
$\Phi_m, c_m, a_m, c_m^i, \mu_m^i, \Delta_m$ introduced earlier can be obtained from
this, and that their joint distributions is as discussed above.

Moreover, let the initial permutation $\theta_0$ be independent of
$X$ and uniformly distributed on $S_N$. Let $\sigma^i_m$ be independent of all other
random variables and uniformly distributed on $S_{c^i_m+1}$, $1\le i \le a_m$, $m\in\N$.

Define $\theta_m$ as above, and
\begin{equation}
   \Theta(t) := \theta_m \quad\text{for}\:t_m \le t < t_{m+1}.
\end{equation}
Observe that, by Lemma~\ref{distributed},
\begin{equation}\label{define_y_through_theta}
   (Y_1(t),\ldots,Y_N(t)) := (X_{\Theta_1(t)}(t),\ldots,X_{\Theta_N(t)}(t))
\end{equation}
is a version of the $(\Xi,B)$-Moran-model. Note that ``one-member families''
are in this construction simply treated as non-participating individuals in the
$(\Xi,B)$-Moran-model.

$Y(t)$ depends only on $Y(0)$, $\{\chi_m\}_{t_m \le t}$ and
the the evolution of the type processes between birth and death
events, so $\Theta(t)$, and hence $\Theta(t)^{-1}$ is independent of
\begin{equation}\label{eq_xi_filtration}
   \mathcal{G}_t :=
   \sigma\big((Y_1(s),\ldots,Y_N(s)):s\le t\big) \vee \sigma(\chi_m : m \in \N)
\end{equation}
due to Lemma~\ref{distributed}.
Therefore, we see from
\begin{equation}
   (X_1(t),\ldots,X_N(t)) =
   (Y_{\Theta^{-1}_1(t)}(t),\ldots,Y_{\Theta^{-1}_N(t)}(t))
\end{equation}
that $(X_1(t),\ldots,X_N(t))$ is exchangeable.
\end{proof}

\begin{corollary}
Starting from the same exchangeable initial condition, the laws of the
empirical processes of the $(\Xi,B)$-Moran-model and the
$(\Xi,B)$-lookdown-model coincide.
\end{corollary}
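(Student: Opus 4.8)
The plan is to exploit the coupling already constructed in the proof of Theorem~\ref{exchangeable_fixed_times}, together with the elementary observation that the empirical measure of a labelled vector is invariant under permutation of its coordinates. Concretely, I would realize the lookdown model $X=(X_1,\dots,X_N)$ from the prescribed exchangeable initial condition, along with an independent uniform $\theta_0\in S_N$ and the independent permutations $\sigma_m^i$, and form the permutation-valued process $\Theta(t)=\theta_m$ for $t_m\le t<t_{m+1}$ exactly as in that proof. By Lemma~\ref{distributed}, the vector defined in \eqref{define_y_through_theta} is then a genuine version of the $(\Xi,B)$-Moran model; and since $X(0)$ is exchangeable while $\theta_0$ is uniform and independent, the permuted initial vector $Y_i(0)=X_{\theta_0(i)}(0)$ has the same exchangeable law as $X(0)$, so both processes are started from the required common initial condition.

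The key step is that, for every fixed $t$, the map $\Theta(t)$ is a bijection of $\{1,\dots,N\}$, so summing $\delta_{X_i(t)}$ over $i$ in any order yields the same measure:
\[
   \frac 1N\sum_{i=1}^N \delta_{Y_i(t)}
   \ =\ \frac 1N\sum_{i=1}^N \delta_{X_{\Theta_i(t)}(t)}
   \ =\ \frac 1N\sum_{i=1}^N \delta_{X_i(t)}.
\]
Thus, on this single probability space, the empirical process of the Moran model and that of the lookdown model agree \emph{pathwise} (the jump times $t_m$ coincide and at each of them the two vectors differ only by the reordering $\Theta$), and both are c\`adl\`ag elements of $D_{\M_1(E)}([0,\infty))$. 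Two processes whose sample paths coincide almost surely trivially have the same law, so the law of the empirical process of this bona fide Moran model equals the law of the lookdown empirical process, which is the assertion.

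I would expect the only genuine subtlety to be bookkeeping rather than analysis: one must confirm that the coupling furnishes a version of the Moran model \emph{together with} the correct initial distribution, not merely the correct transition mechanism. This is exactly what Lemma~\ref{distributed} and the construction \eqref{define_y_through_theta} supply, the independence and uniformity of $\theta_0$ guaranteeing that the permuted initial vector carries the prescribed exchangeable law. Permutation-invariance of the empirical measure then does all the remaining work; in particular there is no tightness or limiting argument to carry out here, since the identity of the two empirical processes is exact and holds for each finite $N$.
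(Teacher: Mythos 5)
Your proof is correct and is essentially the argument the paper intends: the corollary carries no separate proof precisely because it follows at once from the coupling \eqref{define_y_through_theta} built in the proof of Theorem~\ref{exchangeable_fixed_times}, under which the Moran-model vector is at each time a permutation (via $\Theta(t)$) of the lookdown vector, so the empirical measures agree pathwise and hence in law. Your extra check that independence and uniformity of $\theta_0$ make the permuted initial vector carry the same exchangeable law is exactly the right bookkeeping and matches the paper's construction.
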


The exchangeability property does not only hold for fixed times, but also
for stopping times.

\begin{theorem}\label{exchangeable_stopping_times}
  Suppose that the initial population vectors $Y^N(0)$ in the
  $(\Xi,B)$-Moran-model and $X^N(0)$ in the $(\Xi,B)$-lookdown-model
  have the same exchangeable distribution, and let $\tau$ be a
  stopping time with respect to $(\mathcal{G}_t)_{t\ge 0}$ given
  by \eqref{eq_xi_filtration}. Then, $(X^N_1(\tau),\dots,X^N_N(\tau))$
  is exchangeable.
\end{theorem}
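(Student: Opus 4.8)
The plan is to reduce the statement to the key distributional fact already at the heart of Theorem~\ref{exchangeable_fixed_times}: that the coupling permutation is uniform and independent of the demographic and type history. Recall that the proof of Theorem~\ref{exchangeable_fixed_times} rests on two ingredients, valid for each fixed $t$: the identity
\begin{equation*}
   (X_1(t),\dots,X_N(t)) = (Y_{\Theta^{-1}_1(t)}(t),\dots,Y_{\Theta^{-1}_N(t)}(t)),
\end{equation*}
obtained by inverting \eqref{define_y_through_theta}, and the fact that $\Theta(t)$ (hence $\Theta(t)^{-1}$) is uniformly distributed on $S_N$ and independent of $\mathcal{G}_t$. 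The structural argument deducing exchangeability of $X(t)$ from these two facts does not use that $t$ is deterministic: if we can show that $\Theta(\tau)$ is uniform on $S_N$ and independent of $\mathcal{G}_\tau$, then, conditionally on $\mathcal{G}_\tau$, the vector $(X_1(\tau),\dots,X_N(\tau))$ is the $\mathcal{G}_\tau$-measurable vector $(Y_1(\tau),\dots,Y_N(\tau))$ reshuffled by the independent uniform permutation $\Theta^{-1}(\tau)$, and is therefore exchangeable. (Here we use that $Y$ is adapted to $(\mathcal{G}_t)$ and c\`adl\`ag, so $Y(\tau)$ is $\mathcal{G}_\tau$-measurable, and that the inverse of a uniform permutation is again uniform.) Thus everything reduces to the claim that $\Theta(\tau)$ is uniform on $S_N$ and independent of $\mathcal{G}_\tau$.

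First I would prove this claim for a stopping time $\tau$ taking values in a countable set $\{s_1,s_2,\dots\}\subset[0,\infty)$. For a permutation $\pi\in S_N$ and an event $A\in\mathcal{G}_\tau$, decompose
\begin{equation*}
   \P\big(\{\Theta(\tau)=\pi\}\cap A\big)
   = \sum_k \P\big(\{\Theta(s_k)=\pi\}\cap A\cap\{\tau=s_k\}\big).
\end{equation*}
Since $\tau$ is a $(\mathcal{G}_t)$-stopping time and $A\in\mathcal{G}_\tau$, each set $A\cap\{\tau=s_k\}$ lies in $\mathcal{G}_{s_k}$. The fixed-time statement (the content of Lemma~\ref{distributed} and Theorem~\ref{exchangeable_fixed_times}) gives that $\Theta(s_k)$ is uniform on $S_N$ and independent of $\mathcal{G}_{s_k}$, so each summand equals $\tfrac{1}{N!}\P(A\cap\{\tau=s_k\})$. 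Summing over $k$ yields $\P(\{\Theta(\tau)=\pi\}\cap A)=\tfrac{1}{N!}\P(A)$, which is precisely uniformity of $\Theta(\tau)$ together with independence from $\mathcal{G}_\tau$.

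For a general $(\mathcal{G}_t)$-stopping time $\tau$ (which is finite since $X(\tau)$ is to be formed) I would approximate from above by the dyadic stopping times $\tau_n:=2^{-n}\lceil 2^n\tau\rceil$, which decrease to $\tau$ and take countably many values, so the previous step applies with $\mathcal{G}_\tau\subseteq\mathcal{G}_{\tau_n}$. The process $t\mapsto\Theta(t)$ is piecewise constant and right-continuous by construction ($\Theta(t)=\theta_m$ on $[t_m,t_{m+1})$), whence $\Theta(\tau_n)\to\Theta(\tau)$ almost surely; as $S_N$ is finite, $\1_{\{\Theta(\tau_n)=\pi\}}\to\1_{\{\Theta(\tau)=\pi\}}$ almost surely. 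Passing to the limit in $\P(\{\Theta(\tau_n)=\pi\}\cap A)=\tfrac{1}{N!}\P(A)$ for fixed $A\in\mathcal{G}_\tau$ by dominated convergence gives $\P(\{\Theta(\tau)=\pi\}\cap A)=\tfrac{1}{N!}\P(A)$, establishing the claim for arbitrary $\tau$ and completing the proof.

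The main obstacle is exactly this stopping-time upgrade of the distributional statement for $\Theta$, i.e.\ that $\Theta(\tau)$ is still uniform and independent of $\mathcal{G}_\tau$; note that $\Theta$ is deliberately \emph{not} adapted to $(\mathcal{G}_t)$ (it is independent of $\mathcal{G}_t$ at each fixed $t$), so one cannot invoke a strong Markov property directly and must instead exploit the fixed-time independence through the countable decomposition and the right-continuity approximation above. Once the claim is in hand, exchangeability of $X(\tau)$ follows by the conditioning argument identical in form to the fixed-time case.
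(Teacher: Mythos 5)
Your proposal is correct and follows essentially the same route as the paper's proof: establish that $\Theta(\tau)$ is uniform on $S_N$ and independent of $\mathcal{G}_\tau$, first for countably-valued stopping times via the decomposition over $\{\tau=s_k\}$ and the fixed-time independence from Theorem~\ref{exchangeable_fixed_times}, then for general $\tau$ by approximation from above, and finally conclude exchangeability exactly as in the fixed-time case. In fact you supply details the paper leaves implicit (the dyadic approximants, right-continuity of the piecewise-constant $\Theta$, and dominated convergence), so the proposal is, if anything, a more complete write-up of the same argument.
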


\begin{proof}
We show that $\Theta(\tau)$ is
independent of the $\sigma$-algebra $\mathcal{G}_\tau$ (the $\tau$-past)
and uniformly distributed over $S_N$.

First, assume that $\tau$ takes only countable many values $t_k$, $k\in\N$.
Let $A\in\mathcal{G}_\tau$ and $h:S_N\to\mathbb{R}_+$, then
\begin{equation}\label{discrete_stopping_times}
\begin{split}
  \mathbb{E}\Big( h\big(\Theta(\tau)\big)\1_A\Big)
  & = \mathbb{E} \Big( \sum_{k=1}^\infty h\big(\Theta(t_k)\big) \1_{A\cap\{ \tau = t_k \}} \Big)\\
  & = \sum_{k=1}^\infty \Big(\mathbb{E} h\big(\Theta(t_k)\big)\Big) \Big(\mathbb{E}\1_{A\cap\{\tau = t_k\}}\Big)\\
  & = \int h(\Theta)\,\mathfrak{U}(d\Theta)
      \sum_{k=1}^\infty\mathbb{E}\1_{A\cap\{\tau = t_k\}}\\
  & = \int h(\Theta)\,\mathfrak{U}(d\Theta)\,\mathbb{E}\1_A,
\end{split}
\end{equation}
where $\mathfrak{U}$ denotes the uniform distribution on $S_N$.
To see that the second equality holds, observe that, for fixed
$t_k$, $\Theta(t_k)$ is independent of $\mathcal{G}_{t_k}$ in
the proof of Theorem \ref{exchangeable_fixed_times}.

By approximating an arbitrary stopping time from above by a sequence
of discrete stopping times, we see that \eqref{discrete_stopping_times}
holds in the general case as well. Now, exchangeability of
$(X^N_1(\tau),\dots,X^N_N(\tau))$ follows as in the proof
of Theorem~\ref{exchangeable_fixed_times}.
\end{proof}

\begin{remark} \rm
One can also define a variant of the $(\Xi,B)$-lookdown
model which is more in the spirit of the `classical' lookdown construction
from \cite{DK96}, where, instead of a)--c) on page~\pageref{items:atoc},
at a jump time each particle simply copies the type of that member of the family it
belongs to with the lowest level (and no types get shifted upwards). This variant,
which is (up to a renaming of levels by the points of a Poisson process on $\R$)
also the one suggested by adapting \cite{KR08} to the `simultaneous multiple merger'-scenario,
has been considered by Taylor \& V\'eber (2008, personal communication).
The same results as above hold for this variant, with only minor modifications of the
proofs. Note that the flavour of the lookdown process described above is easily
adaptable to a set-up with time-varying total population size, which is not obvious
for the other variant.
\end{remark}

\subsection{The limiting population}
\label{limitpop}

We now construct the limiting $E^\infty$-valued particle system $X = (X_1,X_2,\ldots)$ by formulating a stochastic differential
equation for each level $l$. These exist for each level and are well
defined, since the equation for level $l$ needs only information about
lower levels.

The generator \eqref{mutop} of a pure jump process can be written in the form
\[
    Bf(x) = r\int_0^1 \big( f(m(x,u)) - f(x)\big)\,du,
\]
where $r$ is the global mutation rate and $m\colon E\times[0,1] \to E$
transforms a uniformly distributed random variable on $[0,1]$ into the
jump distribution $q(x,dy)$ of the process. The random times and
uniform ``coins'' for the mutation process at each level $l$ are given
by a Poisson point process $\mathfrak{N}^{\text{Mut}}_l$ on
$\R_+\times[0,1]$ with intensity measure $r dt \otimes du$.

As in Section~\ref{canon_moran_modell}, denote by
\[
    (t_m,{\bzeta}_m, {\bf u}_m) = (t_m, (\zeta_{m1}, \zeta_{m2}, \ldots), (u_{m1}, u_{m2}, \ldots))
\]
the points of the Poisson point process $\mathfrak{M}^{\Xi_{0}}$ and
recall the definition \eqref{eq_definition_g} of the ``colour''
function $g$. Based on this, define
\begin{equation}\label{eq_llj}
   L_J^l(t)\ := \sum_{m:t_m\le t}
   \prod_{j\in J}\1_{\{g({\bzeta}_m, u_{mj})< \infty\}}
   \prod_{j\in\{1,\ldots,l\}\setminus J}\1_{\{g({\bzeta}_m,u_{mj})=\infty\}},
\end{equation}
for $J \subset \{1, \dots ,l\}$ with $|J|\ge 2$\,.
$L^l_J(t)$ counts how many times, among the levels in $\{1,\dots,l\}$,
exactly those in $J$ were involved in a birth event up to time $t$.  Moreover, let
\begin{equation}\label{eq_lljk}
   L_{J, k}^l(t)\ := \sum_{m:t_m\le t}
   \prod_{j\in J}\1_{\{g({\bzeta}_m, u_{mj})=k\}}
   \prod_{j\in\{1,\ldots,l\}\setminus J}\1_{\{g({\bzeta}_m,u_{mj})\neq k\}}.
\end{equation}
$L^l_{J,k}(t)$ counts how many times, among the levels in
$\{1,\dots,l\}$, exactly those in $J$ were involved in a birth event
up to time $t$ and additionally assumed ``colour'' $k$.

To specify the new levels of the individuals not participating in a
certain birth event, we construct a function $J_m$ as follows:

Denote by $\mu_m^k := \min\{l \in \N \,\vert\, g({\bzeta}_m,u_{ml}) =
k\}$ the level of the parental particle of family number $k$ and by
$M_m := \{\mu_m^k\}_{k \in \N}$ the set of all levels of parental
particles involved in the $m$-th birth event. Furthermore $U_m := \{ l
\in \N \,\vert\, g({\bzeta}_m,u_{ml}) = \infty\}$ denotes the set of
the levels not participating in the birth event $m$.
Define the mapping
\begin{equation}\label{eq_function_jm}
   J_m\,:\,U_m\to\N\setminus M_m
\end{equation}
that maps the $i$-th smallest element of the set $U_m$ to the $i$-th
smallest element of the set $\N\setminus M_m$ for all $i$.

Assuming for the moment that $E$ is an Abelian group,
the (infinite) vector describing the types in the $(\Xi,B)$-lookdown-model
is defined as the (unique) strong solution of the following system of
stochastic differential equations.
The lowest individual on level 1 just evolves according to mutation, i.e.,
\begin{equation} \label{eq_stochastic_differential_equation_lvel_one}
    X_1(t)\ :=\ \int_{[0,t]\times[0,1]}
    (m(X_1(s-),u) - X_1(s-))\,d\mathfrak{N}^{\text{Mut}}_1(s,u).
\end{equation}
The individuals above level one can look down during birth events. Thus,
for $l\ge 2$, define
\begin{equation}\label{eq_stochastic_differential_equation}
\begin{split}
    X_l(t) := & X_l(0) + \int_{[0,t]\times[0,1]} \big(m(X_l(s-),u) - X_l(s-)\big)\,d\mathfrak{N}^{\text{Mut}}_l(s,u)\\
        & + \sum_{1 \leq i <l} \int_0^t (X_i(s-) - X_l(s-))\,d\mathfrak{N}^K_{il}(s)\\
        & + \sum_{1 \leq i < j < l} \int_0^t (X_{l-1}(s-) - X_l(s-))\,d\mathfrak{N}^K_{ij}(s)\\
        & + \sum_{k \in \N} \sum_{K \subset \{1,\ldots,l\},l \in K} \int_0^t (X_{\min(K)}(s-) - X_l(s-))\,dL^l_{K,k}(s)\\
        & + \sum_{K \subset \{1,\ldots,l\},l \notin K} \int_0^t (X_{J_m(l)}(s-) - X_l(s-))\,dL^l_K(s).\\
\end{split}
\end{equation}

The second and third lines describe the ``Kingman events'', where only
pairs of individuals are involved. The first part copies the type from
level $i$ when $l$ looks down to this level, because it is involved in
a birth event and the parental particle is at level $i$. The second
part handles the event that the parental particle places a child on a
level below $l$. In this case, $l$ has to copy the type from the level
$l-1$, since the new individual is inserted at some level below $l$
and pushes all particles above that level one level up.

The fourth and fifth lines describe the change of types for a birth
event with large families in a similar way. If the particle at level
$l$ is involved in the family $k$, it copies the type from the
parental particle which resides at the lowest level of the family. If
level $l$ is not involved in any family, then $J_m(l)$ ($\le l$)
gives the level from where the type is copied (which comes from shifting
particles not involved in the lookdown event upwards).

Since the equation for $X_l$ involves only $X_1,\dots,X_l$ and
finitely many Poisson processes, it is immediate that there exists
a unique strong solution of
\eqref{eq_stochastic_differential_equation_lvel_one}--\eqref{eq_stochastic_differential_equation}.

In the case where $E$ has no group structure, one may still construct
suitable jump-hold processes $X_i$, using the driving Poisson processes in an
obvious extension of \eqref{eq_stochastic_differential_equation_lvel_one}--\eqref{eq_stochastic_differential_equation}.

These stochastic differential equations determine an infinitely large
population vector
\begin{equation}
    X(t)\ :=\ (X_1(t),X_2(t),\ldots)
\end{equation}
in a consistent way, and for each $N\in\N$, the dynamics of $(X_1,\ldots,X_N)$
is identical to that defined in Section~\ref{orderedmodel}. In particular,
we see from Theorem~\ref{exchangeable_fixed_times} that, for each $t\ge 0$,
$X(t)$ is exchangeable and the empirical distribution
\begin{equation}\label{eq_emp_dist_fixed_time_exists}
   Z(t)
   \ :=\ \lim_{l\to\infty} Z^l(t)
   \ :=\ \lim_{l\to\infty}\frac{1}{l}
         \sum_{i=1}^l \delta_{X_i(t)}
\end{equation}
exists almost surely. Let $F$ be the set of bounded measurable functions
$\varphi:[0,\infty)\times [0,1]^\N\times [0,1]^\infty\to\R$ such that
$\varphi(t,\bzeta,\bf{u})$ does not depend on $\bf{u}$, and put
\begin{equation} \label{eq_def_Ht}
   \mathcal{H}_t\ :=\ \sigma\bigg(
      \big(Z(s):s\le t\big),
      \Big({\textstyle\int\varphi\,d\mathfrak{M}^{\Xi_0}}:\varphi\in F\Big)
   \bigg).
\end{equation}
\begin{corollary}
Let $\tau$ be a stopping time with respect to $(\mathcal{H}_t)_{t\ge 0}$.
Then
\begin{equation}
    X(\tau)\ =\ (X_1(\tau),X_2(\tau),\ldots)
\end{equation}
is exchangeable.
\end{corollary}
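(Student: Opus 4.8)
The plan is to establish the stronger statement that, at the stopping time $\tau$, the coordinates $X_1(\tau),X_2(\tau),\dots$ are conditionally i.i.d.\ given $\mathcal{H}_\tau$ with common random law $Z(\tau)$, i.e.\ $\E\big[\prod_{i=1}^N f_i(X_i(\tau))\mid\mathcal{H}_\tau\big]=\prod_{i=1}^N\langle f_i,Z(\tau)\rangle$ for all $N$ and all bounded measurable $f_1,\dots,f_N\colon E\to\R$. Exchangeability of $X(\tau)$ is then immediate: the right-hand side is invariant under permuting the indices $i$, and since products $\prod_i f_i$ determine the finite-dimensional distributions, every finite initial segment of $X(\tau)$ is exchangeable, hence so is the whole sequence. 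This mirrors the structure of the proof of Theorem~\ref{exchangeable_stopping_times}, with the conditional i.i.d.\ (De Finetti) representation playing the role of the uniform coupling permutation and with $(\mathcal{H}_t)$ from \eqref{eq_def_Ht} in the role of $(\mathcal{G}_t)$.

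The input needed at \emph{fixed} times is: for every $t\ge 0$, every $N$, and all bounded measurable $f_1,\dots,f_N$, one has $\E\big[\prod_{i=1}^N f_i(X_i(t))\mid\mathcal{H}_t\big]=\prod_{i=1}^N\langle f_i,Z(t)\rangle$. This refines the fixed-time assertion recorded before the corollary (exchangeability of $X(t)$ from Theorem~\ref{exchangeable_fixed_times} and almost sure existence of the directing measure $Z(t)$ in \eqref{eq_emp_dist_fixed_time_exists}): by De Finetti's theorem the exchangeable sequence $X(t)$ is, conditionally on its directing measure $Z(t)$, i.i.d.\ with law $Z(t)$; it then remains to check that the additional information in $\mathcal{H}_t$ beyond $\sigma(Z(t))$, namely the empirical path $\{Z(s):s\le t\}$ and the coin-free demographic data $\sigma\big(\int\varphi\,d\mathfrak{M}^{\Xi_0}:\varphi\in F\big)$, is conditionally independent of $X(t)$ given $Z(t)$. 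This should hold because $\mathcal{H}_t$ is built only from permutation-invariant and label-free quantities, whereas the types attached to the individual levels enter solely through the additional, independent sampling and relabelling randomness of the lookdown construction.

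Granting this fixed-time identity, I would follow \eqref{discrete_stopping_times} verbatim. For a stopping time $\tau$ taking countably many values $t_k$ and $A\in\mathcal{H}_\tau$, I write $\E\big[\prod_i f_i(X_i(\tau))\1_A\big]=\sum_k\E\big[\prod_i f_i(X_i(t_k))\1_{A\cap\{\tau=t_k\}}\big]$; since $A\cap\{\tau=t_k\}\in\mathcal{H}_{t_k}$, the fixed-time identity turns each summand into $\E\big[\prod_i\langle f_i,Z(t_k)\rangle\1_{A\cap\{\tau=t_k\}}\big]$, and summing yields $\E\big[\prod_i\langle f_i,Z(\tau)\rangle\1_A\big]$, so the conditional i.i.d.\ property holds at discrete $\tau$. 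A general $(\mathcal{H}_t)$-stopping time is approximated from above by $\tau_n\downarrow\tau$ taking countably many values with $\mathcal{H}_\tau\subseteq\mathcal{H}_{\tau_n}$; taking the $f_i$ bounded continuous and using that $t\mapsto X_i(t)$ and $t\mapsto Z(t)$ are c\`adl\`ag (Theorem~\ref{main}) together with dominated convergence, I pass to the limit in $\E\big[\prod_i f_i(X_i(\tau_n))\1_A\big]=\E\big[\prod_i\langle f_i,Z(\tau_n)\rangle\1_A\big]$ to obtain the identity at $\tau$. A monotone-class argument extends it from continuous to bounded measurable $f_i$, and exchangeability of $X(\tau)$ follows as explained above.

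I expect the main obstacle to be the fixed-time step, that is, verifying that conditioning on all of $\mathcal{H}_t$, rather than only on $\sigma(Z(t))$, does not disturb the conditional i.i.d.\ structure. The delicate point is that the empirical process $Z$ is itself affected by the driving coins through the identities of the sampled parents, so one must argue that, given the family-size data $(t_m,\bzeta_m)$ and the directing measure $Z(t)$, the only remaining freedom in the individual labels is carried by independent uniform randomness that $\mathcal{H}_t$ does not record; this is precisely the reason for the deliberately coarse definition \eqref{eq_def_Ht} of the filtration.
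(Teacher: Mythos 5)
Your overall skeleton---a fixed-time identity, then discrete stopping times via the decomposition $\sum_k\E\big[\cdots\1_{A\cap\{\tau=t_k\}}\big]$, then approximation of a general stopping time from above using right-continuity of the paths---is exactly the paper's, and those two reduction steps are fine. The genuine gap is the fixed-time step, which you yourself flag as ``the main obstacle'' but then only assert. Your claim that the information in $\mathcal{H}_t$ beyond $\sigma(Z(t))$ is conditionally independent of $X(t)$ given $Z(t)$ is not an auxiliary check: since $Z(t)$ is $\mathcal{H}_t$-measurable, the conditional form of de Finetti's theorem shows that your fixed-time identity is \emph{equivalent} to the statement the paper actually proves, namely \eqref{eq_exch_conditioned}, that $(X_1(t),\dots,X_n(t))$ is exchangeable under $\P\{\cdot\,|\,A\}$ for every $A\in\mathcal{H}_t$. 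So the proposal reduces the corollary to an unproven claim of the same depth, and nothing has been gained by the reformulation.

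Moreover, the symmetry heuristic you offer for that claim is unsound as a general principle. The lookdown path process $(X_i(\cdot))_{i\in\N}$ is \emph{not} exchangeable as a process---the dynamics of level $l$ depends only on lower levels, and there is no ``independent relabelling randomness'' in the construction of $X$ itself; the uniform permutation $\Theta(t)$ exists only as a coupling to the Moran model, and its independence of the demographic and empirical data is the content of Lemma~\ref{distributed}, not a generic consequence of label-freeness. Conditioning a system that is exchangeable at each fixed time on permutation-invariant, label-free path functionals can perfectly well destroy exchangeability: take $X_i(0)$ i.i.d.\ uniform and let every particle jump at time $1$ to the type of particle $1$ (a legitimate lookdown event with $\zeta_1=1$); then $Z(1)=\delta_{X_1(0)}$ is a label-free functional, yet conditionally on $\big(Z(0),Z(1)\big)$ the time-$0$ vector is not exchangeable, since $X_1(0)$ is pinned to the atom of $Z(1)$ while the other coordinates are not. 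This shows that permutation-invariance of the conditioning data alone cannot carry the argument; one must use the precise structure of $\mathcal{H}_t$ (empirical path only up to time $t$, demographic data without the coins) together with the finite-system coupling. That is exactly what the paper's proof supplies and your proposal lacks: a Dynkin-system reduction to events $A=\{Z(s_1)\in B_1,\dots,Z(s_k)\in B_k\}\cap H'$ with $\P\{Z(s_i)\in\partial B_i\}=0$, approximation of these by events $A_l$ about the finite empirical measures $Z^l$, the argument of Theorem~\ref{exchangeable_stopping_times} (resting on Lemma~\ref{distributed}) applied to the $A_l$, and finally $\varepsilon\to 0$.
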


\begin{proof}
We claim that for $t\ge 0$, $A\in\mathcal{H}_t$ with $\P\{A\}>0$ and $n\in\N$,
\begin{equation}
\label{eq_exch_conditioned}
(X_1(t),\dots,X_n(t))\mbox{ is exchangeable under $\P\{\cdot|A\}$}.
\end{equation}
Observe that, taking $A=\{\tau=t_k\}$, \eqref{eq_exch_conditioned}
immediately implies the result for discrete stopping times $\tau$, from
which the general case can be deduced by approximation as in the proof
of Theorem~\ref{exchangeable_stopping_times}.

Obviously, \eqref{eq_exch_conditioned} is equivalent to
\begin{equation}
\label{eq_exch_conditioned2}
\P\big\{A \cap \{ (X_1(t),\dots,X_n(t)) \in C \} \big\}
= \P\big\{A \cap \{ (X_{\sigma(1)}(t),\dots,X_{\sigma(n)}(t)) \in C \} \big\}
\quad \forall\, C  \subset E^n, \sigma \in S_n.
\end{equation}
As the collection of sets $A$ from $\mathcal{H}_t$ satisfying
\eqref{eq_exch_conditioned2} is a Dynkin system, it suffices to verify
\eqref{eq_exch_conditioned2} for events of the form
\begin{equation} \label{eq_A_cap_stable}
   A\ =\ \{Z(s_1)\in B_1,\ldots,Z(s_k)\in B_k\}\cap H',
\end{equation}
where $H' \in \sigma\big( \int \varphi \, d\mathfrak{M}^{\Xi_{0}} : \varphi \in F \big)$,
$k\in\N$, $s_1<\cdots<s_k\le t$, 
$B_i\in\mathcal{B}(s_i)$ for $i\in\{1,\ldots,k\}$, and $\mathcal{B}(s_i)$ is a $\cap$-stable
generator of $\mathcal{B}_{\mathcal{M}_1(E)}$ with the property that
$\P\{Z(s_i) \in \partial B'\}=0$ for all $B'\in \mathcal{B}(s_i)$.

For $A$ as given in \eqref{eq_A_cap_stable}, $\varepsilon > 0$
and $n \in\N$, $\sigma\in S_n$, $C\subset E^n$ appearing in \eqref{eq_exch_conditioned2},
by \eqref{eq_emp_dist_fixed_time_exists} there exists $l$ ($l \gg n$) such that
\[
A_l\ :=\ \{Z^l(s_1)\in B_1,\ldots,Z^l(s_k)\in B_k\}\cap H'
\]
satisfies $\P\{(A\setminus A_l)\cup (A_l\setminus A)\}\le\varepsilon$.
By the arguments given in the proof of Theorem~\ref{exchangeable_stopping_times},
\eqref{eq_exch_conditioned2} holds with $A$ replaced by $A_l$.
Finally, take $\varepsilon\to 0$ to conclude.
\end{proof}

\section{Pathwise convergence: Proof of Theorem~\ref{main}}
\label{tightness}

Recall the empirical processes $Z^l$, and their limit $Z$, from
\eqref{eq_emp_dist_fixed_time_exists}. Obviously, for each $l\in\N$,
the process $(Z^l(t))_{t\ge 0}$ has c\`adl\`ag paths. To
verify the corresponding property for $Z$, we introduce the following
auxiliary (L\'evy) process $U$, derived from Poisson point process
$\mathfrak{M}^{\Xi_{0}}$ which governs the large family birth events
of the population $X$: If $\big\{(t_m,{\bzeta}_m,{\bf u}_m)\big\}$
are the points of the process $\mathfrak{M}^{\Xi_0}$, we define
\begin{equation}
    U(t)\ :=\ \sum_{t_m\le t}v_m^2,
\end{equation}
where $v_m:=\sum_{i=1}^\infty\zeta_{mi}$. The jumps of $U:=(U(t))_{t\ge 0}$ are
the squared total fractions of the population which are replaced in
large birth events. The generator of $U$ is given by
\begin{equation}
    Df(u)\ =\ \int_0^1(f(u+v^2)-f(u))\,\nu(dv),
\end{equation}
where the measure $\nu$ on $[0,1]$, defined via
\begin{equation}
    \nu(A)\ :=\ \int_\Delta \1_{\{\sum_{i=1}^\infty\zeta_i\in A\}}
    \frac{\Xi(d\bzeta)}{({\bzeta},{\bzeta})},
\end{equation}
governs the jumps.

We need the following version of Lemma~A.2 from \cite{DK99}.
\begin{lemma}\label{A2} a)
Let $e_1,e_2,\dots$ be exchangeable and suppose there exists a
constant $K$ such that $|e_i|\le K$ almost surely. Define
\begin{equation}
    M_k\ :=\ \frac{1}{k}\sum_{i=1}^k e_i
\end{equation}
and let $M_\infty$ be the almost sure limit of $(M_k)_{k\in\N}$, whose
existence is guaranteed by the de Finetti Theorem.  Let $\varepsilon>0$.
Then there exists $\eta_1>0$ depending only on $K$ and $\varepsilon$,
such that, for
$l<n \in \N \cup \{\infty\}$,
\begin{equation}
    \P\{\left|M_n-M_l\right|\ge \varepsilon\} \le
    2e^{-\eta_1(K,\varepsilon) l}.
\end{equation}

b) Let $(e_i(t))_{t\in[0,1]}$ be centered martingales such that
$\max_{i \in\N} \sup_{t \in [0,1]} |e_i(t)| \le K$ almost surely and
$(e_1(1),e_2(1),\ldots)$ is exchangeable. Put
\[
M_k(t)\ :=\ \frac{1}{k}\sum_{i=1}^k e_i(t).
\]
Let $\varepsilon>0$. Then there
exists $\eta_2>0$ depending only on $K$ and $\varepsilon$,
such that, for $l\in\N$ 
\begin{equation}
    \P\{\sup_{t\in[0,1]} |M_k(t)| \ge \varepsilon\} \le
    2e^{-\eta_2(K,\varepsilon) l}.
\end{equation}
\begin{proof}
The proof of part~a) is a straightforward extension of
that of Lemma A.2 from \cite{DK99}, which employs the fact that an
infinite exchangeable sequence is conditionally i.i.d.~together with
standard arguments based on the moment generating function.

For part~b) observe that by Doob's submartingale inequality,
\begin{equation} \label{doob_inequality}
   \P\Big\{\sup_{0\le t<1} |M_{k}(t)|\ge\varepsilon\Big\}
    \ \le\ \inf_{\lambda>0} \frac{1}{e^{\varepsilon\lambda}}
            \E e^{\lambda |M_{k}(1)|}
    \ \le\ \inf_{\lambda>0} \frac{1}{e^{\varepsilon\lambda}}
            \E\exp\Big(\frac{\lambda}{k}\sum_{i=1}^k |e_i(1)|\Big).
\end{equation}
Now proceed as in part~a).
\end{proof}
\end{lemma}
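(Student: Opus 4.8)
The plan is to handle both parts by conditioning on the de Finetti (exchangeable) $\sigma$-field and reducing everything to a Chernoff/Hoeffding bound for averages of conditionally i.i.d.\ bounded variables; part~b) then requires only an additional maximal-inequality step to strip off the supremum in time.

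For part~a), I would first invoke de Finetti's theorem: conditionally on the exchangeable $\sigma$-field $\mathcal E$, the sequence $(e_i)$ is i.i.d., and $M_\infty=\E[e_1\mid\mathcal E]$ almost surely. Writing $\tilde e_i:=e_i-M_\infty$, the variables $\tilde e_i$ are, given $\mathcal E$, i.i.d., centered, and bounded by $2K$. For any $l<n\le\infty$ one has $M_n-M_l=\sum_i a_i\tilde e_i$ with $a_i=\tfrac1n-\tfrac1l$ for $i\le l$ and $a_i=\tfrac1n$ for $l<i\le n$ (and $\tfrac1\infty:=0$), and a short computation gives $\sum_i a_i^2=\tfrac1l-\tfrac1n\le\tfrac1l$. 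Applying Hoeffding's lemma term by term then yields, conditionally on $\mathcal E$,
\[
\E\big[e^{\lambda(M_n-M_l)}\mid\mathcal E\big]\ \le\ \exp\!\Big(2K^2\lambda^2\big(\tfrac1l-\tfrac1n\big)\Big)\ \le\ \exp\!\Big(\tfrac{2K^2\lambda^2}{l}\Big),\qquad\lambda>0.
\]
A Chernoff bound optimized at $\lambda=\varepsilon l/(4K^2)$ gives $\P\{M_n-M_l\ge\varepsilon\mid\mathcal E\}\le e^{-\varepsilon^2l/(8K^2)}$, and symmetrically for the lower tail; adding the two tails and integrating over $\mathcal E$ produces the claimed bound with $\eta_1(K,\varepsilon)=\varepsilon^2/(8K^2)$.

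For part~b) the new ingredient is to eliminate the supremum over $t$ before quoting part~a). Since each $e_i(\cdot)$ is a martingale and the sum is finite, $M_k(\cdot)$ is a martingale, hence $|M_k(\cdot)|$ is a submartingale and so is $t\mapsto e^{\lambda|M_k(t)|}$ for every $\lambda>0$. Doob's maximal inequality then gives
\[
\P\Big\{\sup_{t\in[0,1]}|M_k(t)|\ge\varepsilon\Big\}\ \le\ e^{-\lambda\varepsilon}\,\E\,e^{\lambda|M_k(1)|},
\]
which reduces the estimate to the terminal time $t=1$. Because $(e_1(1),e_2(1),\ldots)$ is exchangeable and bounded by $K$, the terminal variables satisfy the hypotheses of part~a), and I would bound the right-hand side by conditioning on $\mathcal E$ and running the same Hoeffding/Chernoff argument, arriving at a bound of the form $2e^{-\eta_2(K,\varepsilon)l}$.

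I expect the difficulties to be organisational rather than conceptual. In part~a) the one point needing care is that the estimate must be uniform over all $n\le\infty$; this is exactly what the identity $\sum_i a_i^2=\tfrac1l-\tfrac1n\le\tfrac1l$ delivers, so the smaller index $l$ always controls the rate. In part~b) the only genuine step beyond part~a) is the submartingale observation that legitimises replacing the path-supremum of $|M_k|$ by the moment generating function of its terminal value, after which the argument of part~a) applies unchanged.
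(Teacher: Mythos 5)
You follow the paper's route exactly: your part~a) is the de Finetti/conditionally-i.i.d.\ argument with Hoeffding--Chernoff bounds that the paper invokes by reference to Lemma~A.2 of Donnelly and Kurtz, and your part~b) opens with precisely the paper's display \eqref{doob_inequality}, Doob's submartingale inequality applied to $e^{\lambda|M_k(\cdot)|}$. Your execution of part~a) is complete and correct: the weight identity $\sum_i a_i^2=\tfrac1l-\tfrac1n\le\tfrac1l$ (with $\tfrac1\infty:=0$) handles all $n\le\infty$ uniformly, the conditional Hoeffding bound $\E[e^{\lambda(M_n-M_l)}\mid\mathcal E]\le e^{2K^2\lambda^2/l}$ is right, and optimising the Chernoff bound gives $\eta_1=\varepsilon^2/(8K^2)$. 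This is a faithful fleshing-out of what the paper leaves implicit.

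In part~b), however, the final step ``the terminal variables satisfy the hypotheses of part~a)\,\dots\,arriving at a bound of the form $2e^{-\eta_2 l}$'' does not go through as written. Part~a) controls $|M_l(1)-M_\infty(1)|$, the deviation of $M_l(1)$ from the de Finetti limit $M_\infty(1)=\E[e_1(1)\mid\mathcal E]$; it says nothing about $|M_l(1)|$ itself unless $\E[e_1(1)\mid\mathcal E]=0$ almost surely, and that conditional centering is \emph{not} implied by the hypotheses: the martingales are centered only unconditionally. Concretely, let $\xi=\pm1$ with probability $\tfrac12$ each and set $e_i(t)=\xi\,\1_{[1/2,1]}(t)$ for every $i$; these are bounded centered martingales with exchangeable terminal values, yet $\sup_{t\in[0,1]}|M_l(t)|=1$ almost surely for every $l$, so no bound of the form $2e^{-\eta_2 l}$ can hold. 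Thus the step fails under the stated hypotheses, and an extra ingredient is needed. To be fair, the paper's own proof (``now proceed as in part~a)'') glosses over exactly the same point---its intermediate bound by $\E\exp\bigl(\tfrac\lambda k\sum_i|e_i(1)|\bigr)$ is even lossier, since that average concentrates around $\E[\,|e_1(1)|\mid\mathcal E]$, not around $0$---and the lemma is rescued only by the structure present where it is applied, in \eqref{condlargedev}: there the $e_j$ are, conditionally on the configuration at time $\alpha_o$, \emph{independent} centered bounded martingales, so one can condition on that $\sigma$-field instead of the exchangeable one and apply Hoeffding--Chernoff directly. If you add that conditioning (or the hypothesis $\E[e_1(1)\mid\mathcal E]=0$ a.s.) your argument closes; as written it inherits the paper's gap.
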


The following lemma provides the technical core of the argument
and replaces Lemma~3.4 and Lemma~3.5 in \cite{DK99}. The proof given below
follows closely the arguments of Donnelly and Kurtz \cite{DK99}.
\begin{lemma}\label{lemma_for_bound}
  In the setting of Theorem~\ref{main}, for all $c,T,\epsilon
  > 0$ and $f \in \mathcal{D}(B)$ (the domain of the mutation generator) there exists a sequence
  $\delta_l$ such that $\sum_{l=1}^\infty\delta_l < \infty$ and
\begin{equation}
  \P \Big\{ \sup_{0\le t\le T}\big\vert\langle f,Z(t) \rangle - \langle f,Z^l(t) \rangle\big\vert \ge 11\epsilon, U(T)\le c \Big\} \le \delta_l.
\end{equation}
\end{lemma}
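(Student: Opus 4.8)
The plan is to control the discrepancy $\langle f, Z(t)\rangle - \langle f, Z^l(t)\rangle$ by splitting the particle system evolution into its three driving mechanisms — mutation, Kingman (binary) events, and the large-family $\Xi_0$-events — and bounding each contribution's effect on the difference between the full empirical measure and its level-$l$ truncation. The essential observation is that, for fixed $t$, the quantity $\langle f, Z(t)\rangle - \langle f, Z^l(t)\rangle = M_\infty(t) - M_l(t)$ where $e_i(t) := f(X_i(t))$ forms an exchangeable family (by Theorem~\ref{exchangeable_fixed_times}), so that Lemma~\ref{A2} applies pointwise in $t$ and yields exponential-in-$l$ control. The work is to upgrade this pointwise bound to a uniform-in-$t$ bound over $[0,T]$ on the event $\{U(T)\le c\}$.

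First I would fix a fine deterministic partition $0 = s_0 < s_1 < \cdots < s_M = T$ and write the supremum over $[0,T]$ as a maximum over grid points plus the oscillation within each subinterval. At the grid points, an application of part~a) of Lemma~\ref{A2} (with the bound $K = \|f\|_\infty$) controls $|M_\infty(s_j) - M_l(s_j)|$ by $2e^{-\eta_1 l}$ for each of the finitely many $j$; a union bound gives summable-in-$l$ control of the maximum, contributing a few of the $11\epsilon$ budget. For the within-interval oscillation I would use the stochastic differential equations \eqref{eq_stochastic_differential_equation_lvel_one}--\eqref{eq_stochastic_differential_equation}: the mutation part of each $t\mapsto f(X_i(t))$, after compensating, is a bounded martingale, so part~b) of Lemma~\ref{A2} bounds $\sup_t$ of the averaged mutation-martingale fluctuations, again exponentially in $l$. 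The compensator of the mutation part and the Kingman-event contributions are Lipschitz in time with bounded derivative, so their oscillation over a mesh interval of width $|s_{j+1}-s_j|$ is deterministically small once the mesh is fine, costing a controlled fraction of $\epsilon$.

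The hard part will be the $\Xi_0$ large-family events, since the jump times $(t_m)$ may be dense in $\R_+$ and a single such event can move the empirical measure by a macroscopic amount $v_m$. This is precisely where the constraint $\{U(T)\le c\}$ enters: the total squared mass replaced, $\sum_{t_m\le T} v_m^2 = U(T)$, is bounded by $c$, so only finitely many events have $v_m^2 > \delta$ for any threshold $\delta$, and the cumulative effect of all events with $v_m^2 \le \delta$ is controlled in $L^2$ (or via a second-moment/exchangeability estimate) by $U(T)\le c$. I would therefore separate the large-family events into the finitely many ``big'' jumps — which I can insert as additional refinement points of the partition so that each is resolved at a grid point and handled by part~a) as above — and the remaining ``small'' jumps, whose aggregate contribution to the oscillation of $M_l - M_\infty$ I bound uniformly using exchangeability together with the $\sum v_m^2 \le c$ budget. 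Assembling the pieces — grid points, mutation martingale, Lipschitz drift, big jumps, small jumps — each consuming at most a fixed multiple of $\epsilon$ and each failing with probability summable in $l$, produces a single sequence $\delta_l$ with $\sum_l \delta_l < \infty$ and the stated $11\epsilon$ threshold, completing the argument.
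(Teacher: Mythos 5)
The proposal's skeleton (grid points via Lemma~\ref{A2}, within-cell oscillation split by mechanism) superficially parallels the paper, but it has a genuine gap at exactly the point you flag as "the hard part". The constraint $\{U(T)\le c\}$ bounds only the \emph{total} accumulated squared jump mass $\sum_{t_m\le T}v_m^2$; it says nothing about how that mass distributes over a \emph{deterministic} grid. Since $\int_\Delta(\bzeta,\bzeta)^{-1}\Xi_0(d\bzeta)$ may be infinite, the jump times of $\mathfrak{M}^{\Xi_0}$ are in general dense, and essentially all of the budget $c$ can cluster inside a single mesh cell $[s_j,s_{j+1})$ through many small jumps, however fine the mesh. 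On such a cell the small jumps move $\langle f,Z\rangle$ by an amount of order $\|f\|\sqrt{c}$ (the jump part of $\langle f,Z\rangle$ is a martingale whose per-jump variance is of order $\|f\|^2v_m^2$), which cannot be forced below $\epsilon$, and the probability of such clustering does not decay in $l$ at all. This is precisely why the paper does \emph{not} use a deterministic partition: it uses the random times $\alpha_o$, defined through the increments of $U$ itself, so that every cell has both length $\le l^{-4}$ and $U$-increment $\le l^{-4}$, with at most $o_l=2(c+T)l^4$ cells on $\{U(T)\le c\}$. Using random times then forces one to invoke exchangeability at \emph{stopping} times (Theorem~\ref{exchangeable_stopping_times} and the corollary following \eqref{eq_def_Ht}), whereas you only invoke the fixed-time statement; note that your "big-jump refinement points" are also random times, so Lemma~\ref{A2}(a) does not apply at them without that result either.

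A second, independent gap is that a "second-moment/$L^2$ estimate controlled by $U(T)\le c$" can never produce a summable sequence $\delta_l$. Even with the correct ($U$-adapted) partition, Doob/Chebyshev on the within-cell oscillation of $\langle f,Z\rangle$, or of the difference $\langle f,Z\rangle-\langle f,Z^l\rangle$, gives at best $C\|f\|^2l^{-4}/\epsilon^2$ per cell, and multiplying by $o_l\sim l^4$ cells yields a constant in $l$, not something summable. The paper's proof is structured to avoid ever bounding the oscillation of the limit $Z$ directly: it bounds only the oscillation of $Z^l$ (the terms $K_1$ and $K_2$), where two devices give per-cell bounds of order $l^{-6}$ resp.\ $e^{-\eta l}$ --- (i) the parent-subtraction observation that a jump of size $v_m$ changes the first $l$ levels only when at least \emph{two} of them participate, so the number of children below level $l$ is dominated by $(b_m-1)_+$ with $b_m$ binomial with parameters $l$ and $v_m$, whose mean is of order $l^2v_m^2$, i.e.\ \emph{quadratic} in $v_m$ and hence controlled by the $U$-increment (a bound linear in $v_m$ is useless because $\sum_m v_m$ may be infinite even on $\{U(T)\le c\}$), combined with fourth-moment estimates; and (ii) Lemma~\ref{A2}(b) applied to the mutation martingales along ancestral lines $\tilde X_j$. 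The oscillation of $Z$ itself is then controlled only \emph{indirectly}, by the contradiction argument with the exit times $\tilde\alpha_o$: since $Z$ and $Z^l$ agree within $\epsilon$ at the stopping times $\alpha_o$ and $\tilde\alpha_o$, and $Z^l$ moves at most $4\epsilon$ per cell, $Z$ cannot leave a $6\epsilon$-band inside a cell. Your sketch contains no substitute for this transfer mechanism, and as argued above a direct moment bound on $Z$'s oscillation is too weak, so the argument does not close. (A smaller error: the Kingman contribution is a jump process, not Lipschitz --- only its compensator is --- and its rate on $\{1,\ldots,l\}$ grows like $al^2$ with jumps of size $2\|f\|/l$, so your mesh would in any case have to shrink with $l$; this part is repairable.)
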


\begin{proof}
By Lemma~\ref{A2} and the exchangeability properties of $X$, we have
\begin{equation}
\label{eq:fstopempir}
  \P\{|\langle f,Z(\alpha)\rangle - \langle f,Z^l(\alpha)\rangle|
  \ge\epsilon\}\ \le\ 2e^{-\eta l},
\end{equation}
if $\alpha$ is a stopping time with respect to
$\widetilde{\mathcal{H}}:=(\widetilde{\mathcal{H}}_t)_{t\ge 0} := \big(\sigma(U(s):s\ge0) \vee
\sigma(Z(s):0\le s\le t)\big)_{t\ge 0}$
(observe that $\widetilde{\mathcal{H}}_t \subset \mathcal{H}_t$, where
$\mathcal{H}_t$ is defined in \eqref{eq_def_Ht}).

Now fix $l$ and $\epsilon$. Define the $\widetilde{\mathcal{H}}$-stopping times
\begin{equation}
    \alpha_1\ :=\ \inf \left\{t:
    U(t)>\frac{1}{l^4}\right\}\wedge\frac{1}{l^4}
\end{equation}
and
\begin{equation}
    \alpha_{o+1}\ :=\ \inf\left\{t: U(t)>U(\alpha_o)+\frac{1}{l^4}
    \right\}\wedge\left(\alpha_o+\frac{1}{l^4}\right), \quad o=1,2,\dots,
\end{equation}
which yield a decomposition of the interval $[0,T]$.
Note that on the event $\big\{ U(T) \le c\big\}$ there exist at most
\begin{equation}\label{definition_o_l}
    o_l:=2(c+T)l^4
\end{equation}
such $\alpha_o$, i.e., we have
\begin{equation}
    \P\big\{\alpha_{o_l} < T, U(\alpha_{o_l}) < c, U(T) \le c\big\}=0.
\end{equation}

We define a second kind of $\widetilde{\mathcal{H}}$-stopping
times depending on $\alpha_k$ via
\begin{equation}\label{definition_alpha_o}
    \tilde{\alpha}_o := \inf\{t>\alpha_o :\vert \langle f,Z(t)\rangle - \langle f,Z(\alpha_o)\rangle \vert \ge 6\epsilon\}.
\end{equation}

We see from (\ref{eq:fstopempir}) that
\begin{equation}
  H_o:=\vert\langle f,Z(\alpha_o)\rangle - \langle f,Z^l(\alpha_o)\rangle\vert \vee \vert\langle f,Z(\tilde{\alpha}_o)\rangle - \langle f,Z^l(\tilde{\alpha}_o)\rangle\vert
\end{equation}
satisfies
\begin{equation}\label{bounded_h_supremum}
    \P\Big\{\sup_{o\le o_l}H_o\ge\varepsilon, U(T) \le c  \Big\}\le
    \sum_{o=1}^{o_l} \P\left\{H_o\ge\varepsilon,  U(T) \le c \right\}\le
    8(c+T)l^4e^{-\eta l}.
\end{equation}

It remains to estimate the variation of $Z^l$ and $Z$ in between the
stopping times $\alpha_o$.
For $u\in [\alpha_o,\alpha_{o+1})$ let $\beta_{jo}(u)$ denote the smallest index of a descendant of
$X_j(\alpha_o)$, let the stopping time $\gamma_{jo}$ be the time
when the smallest descendant of $X_j(\alpha_o)$ is shifted
above the level $l$. Put
\[
\tilde{X}_j(u)=
\begin{cases} X_{\beta_{jo}(u)}(u) & \mbox{if} \;  u < \gamma_{jo},\\
  X_{\beta_{jo}(\gamma_{jo}-)}(\gamma_{jo}-) & \mbox{if} \;  u \ge \gamma_{jo}.
\end{cases}
\]

Observe that
\begin{equation}
\begin{split}
  \langle f,Z^l(u)\rangle &- \langle f,Z^l(\alpha_o)\rangle=
  \langle f,Z^l(u)\rangle-\frac{1}{l} \sum_{j=1}^l
  f(\tilde{X}_j(u))+\frac{1}{l} \sum_{j=1}^l \Big(f(\tilde{X}_j(u)) -
  f(\tilde{X}_j(\alpha_o)) \Big).
\end{split}
\end{equation}

It will be useful to treat the two parts of the sum separately. Define
\begin{equation*}
    K_1:=\max_{o\le o_l}\sup_{u\in[\alpha_o,\alpha_{o+1})}\bigg|\langle f, Z^l(u)\rangle - \frac 1 l \sum_{j=1}^l f(\tilde{X}_j(u))\bigg|
\end{equation*}
and
\begin{equation*}
    K_2:=\max_{o\le o_l}\sup_{u\in[\alpha_o, \alpha_{o+1})}\bigg|\frac 1 l \sum_{j=1}^l \big(f(\tilde{X}_j(u))-f(\tilde{X}_j(\alpha_o))\big)\bigg|.
\end{equation*}
Note that the law of $K_2$ depends only on the mutation mechanism,
since $\tilde{X}_j(u)$ follows the line of the individual
$\tilde{X}_j(\alpha_o) = X_j(\alpha_o)$ and thus only evolves
independently according to a mutation process with generator $B$.

Begin with $K_1$ and note that, for $u\in[\alpha_o,\alpha_{o+1})$,
\begin{equation}\label{part_of_k_bounded}
   \langle f,Z^l(u)\rangle-\frac{1}{l}\sum_{j=1}^l f(\tilde{X}_j(u))
   = \frac{1}{l}\Big(\sum_{j=1}^l f(X_j(u))-\sum_{j=1}^l f(\tilde{X}_j(u))\Big)
   \le \frac{2\|f\|}{l} N^l[\alpha_o,\alpha_{o+1}),
\end{equation}
where $N^l[\alpha_o,\alpha_{o+1})$ is the total number of
births occurring in the time interval $[\alpha_o,\alpha_{o+1})$ with
index less than or equal to $l$. To see this note that at time
$\alpha_o$ the two sums in the second expression cancel. A birth event
in the interval $[\alpha_o,\alpha_{o+1})$ means that one type is
removed from the second sum and another one is added, thus the
expression can be altered by up to $2||f||/l$.

There are two mechanisms which can increase
$N^l[\alpha_o,\alpha_{o+1})$. It can either increase during a large birth event given by a ``jump''
of $\mathfrak{M}^{\Xi_{0}}$ or during a small birth event which is given by one of the
``Kingman-related'' Poisson-Processes $\mathfrak{N}^K_{ij}$.

We first consider large birth events. Let $(v_i)$ be the jumps
of $U$ in the interval $[\alpha_o, \alpha_{o+1})$, and condition on
this configuration for the rest of this paragraph. At the time of the
$m$-th jump, a Binomial($l, v_m$)-distributed number of levels $\le l$
participates in this event, hence $k_m$, the total number of children
below level $l$ in the $m$-th birth event, satisfies
\[
k_m \le (b_m-1)_+,
\]
where $b_m$ is Binomial($l, v_m$)-distributed.
Note that we can subtract 1 from the binomial random variable,
since at least one of the levels participating in the birth event
must be a mother. This subtraction will be crucial later on.

By elementary calculations with Binomial distributions, involving
fourth moments, similar to \cite[p.~186]{DK99}, we can estimate
\begin{equation}
\label{eq:sprich}
 \P\Big\{ \sum_m k_m > \epsilon l \Big\}
\le \P\Big\{ \sum_m (b_m-1)_+ > \epsilon l \Big\} \le \frac{C_1}{l^6}
\end{equation}
for some $0 < C_1 < \infty$. 

As we mentioned before, $N^l[\alpha_o,\alpha_{o+1})$ and thus $K_1$
can also be increased by the Kingman part of the birth process, but
only if the parental particle and its offspring 
are placed below level $l$. The
number of times this happens in the interval $[\alpha_o,\alpha_{o+1})$
is stochastically dominated by a Poisson distributed random variable $R$
with parameter ${l \choose 2}l^{-4}$ since the length of the interval
is bounded by $l^{-4}$. So, the probability that
$\frac{2\|f\|}{l}N^l[\alpha_o,\alpha_{o+1})$ exceeds $2\epsilon$ due
to this mechanism is bounded by the probability that $R$ exceeds
$\frac{l\epsilon}{\|f\|}$.
By elementary estimates on the tails of Poisson random variables, we have
\begin{equation}
  \label{eq:Poisstail}
  \P\Big\{R > \frac{l\epsilon}{\|f\|}\Big\} \le e^{-\eta_1 l},
\end{equation}
for some $\kappa>0$ and $l$ large enough.

Combining (\ref{eq:sprich}) and (\ref{eq:Poisstail}), we obtain
\begin{equation}\label{k1_final_bound}
\begin{aligned}
\P\big\{ K_1 > 2\epsilon, U(T) \le c \big\}
& = \P\Big\{ \max_{o\le o_l}\sup_{u\in[\alpha_o,\alpha_{o+1})}\big\vert\langle f, Z^l(u)\rangle - \frac 1 l \sum_{j=1}^l f(\tilde{X}_j(u)) \big\vert > 2\epsilon, U(T) \le c \Big\}\\
& \le o_l \Big( \frac{C_1}{l^{6}} + e^{-\eta_1 l} \Big),
\end{aligned}
\end{equation}
for $l$ large enough. This controls the increments of $\langle
f,Z^l\rangle$ in the intervals $[\alpha_o,\alpha_{o+1})$.

We now consider $K_2$. Observe that
\begin{align}
  \frac 1 l \sum_{j=1}^l (f(\tilde{X}_j(u))-f(\tilde{X}_j(\alpha_o)))
  & = \frac{1}{l} \sum_{j=1}^l \bigg(f(\tilde{X}_j(u))-f(\tilde{X}_j(\alpha_o)) - \int^u_{\alpha_o} Bf(\tilde{X}_j(s))ds \bigg) \notag \\
  & \quad + \frac{1}{l} \sum_{j=1}^l \int^u_{\alpha_o}
  Bf(\tilde{X}_j(s))ds,
\end{align}
and that, for $u \ge \alpha_o$ and each $o$,
\begin{equation}
    M_{lo}(u \wedge \alpha_{o+1}) := \frac{1}{l} \sum_{j=1}^l \bigg(f(\tilde{X}_j(u\wedge \alpha_{o+1}))-f(\tilde{X}_j(\alpha_o))
                                            - \int^{u\wedge \alpha_{o+1}}_{\alpha_o} Bf(\tilde{X}_j(s))ds \bigg)
\end{equation}
is a martingale.
For $l$ so large that $l^{-4}\|Bf\| \le \varepsilon$, we have
\begin{equation}
\begin{split}\label{k2_first_bound}
    \P\big\{K_2 \ge 2\varepsilon, U(T) \le c \big\} & \le \sum^{o_l-1}_{o=0} \P\Big\{\sup_{\alpha_o \le u < \alpha_{o+1}} \vert M_{lo}(u)
                  + \frac{1}{l} \sum_{j=1}^l \int^u_{\alpha_o} Bf(\tilde{X}_j(s))ds \vert \ge 2\varepsilon, U(T) \le c\Big\}\\
        & \le \sum^{o_l-1}_{o=0} \P\Big\{\sup_{\alpha_o \le u < \alpha_{o+1}} \vert M_{lo}(u) \vert + l^{-4}\|Bf\| \ge 2\varepsilon, U(T) \le c\Big\}\\
        & \le \sum^{o_l-1}_{o=0} \P\Big\{\sup_{\alpha_o \le u < \alpha_{o+1}} \vert M_{lo}(u) \vert \ge \varepsilon, U(T) \le c \Big\}.
\end{split}
\end{equation}
We now need to bound each summand. Using the notation
\[
M_{lo}(u) = \frac{1}{l}\sum_{j=1}^l e_j(u),
\]
where
\begin{equation}
e_j(u) := f(\tilde{X}_j(\alpha_{o+1} \wedge
u))-f(\tilde{X}_j(\alpha_o)) - \int^{\alpha_{o+1}\wedge u}_{\alpha_o}
Bf(\tilde{X}_j(s))ds, \quad u \in [0,1],
\end{equation}
each $(e_i(u))_u$ is a martingale with $\E e_j(u)=0$ and $|e_j(u)|\le
2\|f\|+\|Bf\|/l^4=:K$ almost surely. Moreover, the $e_j(u)$ are
exchangeable. We obtain from Lemma~\ref{A2}
\begin{equation}
\label{condlargedev}
\P\Big\{\sup_{\alpha_o \le u < \alpha_{o+1}} \vert M_{lo}(u) \vert \ge \varepsilon\Big\}
             \le 2 e^{-\eta_2 l},
\end{equation}
for some $\eta_2>0$.

Combining this result with \eqref{k2_first_bound}, we arrive at
\begin{equation}\label{k2_final_bound}
   \P \big\{ K_2 \ge 2\varepsilon, U(T) \le c \big\} \le o_l C_2 e^{-\eta_2 l}.
\end{equation}

Now observe that if $\max_{o\le o_l} H_o  <\epsilon$,
$K_1<2\epsilon$ and $K_2<2\epsilon$, then $\tilde{\alpha}_o\ge
\alpha_{o+1}$. This can easily be seen by contradiction. Indeed,
if we assume that
$\tilde{\alpha}_o < \alpha_{o+1}$, this would imply
\begin{equation}\label{restating_definition}
    \vert \langle f,Z(\alpha_o)\rangle - \langle f,Z(\tilde{\alpha}_o)\rangle \vert \ge 6\epsilon,
\end{equation}
according to \eqref{definition_alpha_o}.
But on the other hand we know that
\begin{equation}
  \vert\langle f,Z(\alpha_o)\rangle - \langle f,Z^l(\alpha_o)\rangle\vert < \epsilon \text{ and }\vert\langle f,Z(\tilde{\alpha}_o)\rangle
   - \langle f,Z^l(\tilde{\alpha}_o)\rangle\vert < \epsilon \quad \forall o
\end{equation}
due to our bound on $H_o$. Since the distance between $\langle
f,Z\rangle$ and $\langle f,Z^l\rangle$ was at most $\epsilon$ at the
beginning of the interval and $\langle f,Z^l\rangle$ can only have
moved by at most $4\epsilon$ on the event $\{ K_1 \le 2\epsilon
\}\cap\{ K_2 \le 2\epsilon \} \cap \{ \max_{o\le o_l}H_o \le
\epsilon\}$,
\begin{equation}
    \vert \langle f,Z(\alpha_o)\rangle - \langle f,Z^l(\tilde{\alpha}_o)\rangle \vert < 5\epsilon
\end{equation}
must hold if $\tilde{\alpha}_o \le \alpha_{o+1}$. But equation \eqref{restating_definition} states that, $\langle
f,Z(\tilde{\alpha}_o)\rangle$ is more than $6\epsilon$ away from its
starting point, so this contradicts that it can only be $\epsilon$
away from $\langle f,Z^l(\tilde{\alpha}_o)\rangle$ which is ensured by
our condition on $H_o$.  Thus $\tilde{\alpha}_o$ has to be greater
than $\alpha_{o+1}$ which in turn implies that
\begin{equation}\label{eq_sup_for_z}
    \sup_{\alpha_o \le u < \alpha_{o+1}}\Big\{\big\vert \langle f,Z(u)\rangle - \langle f,Z(\alpha_o)\rangle \big\vert\Big\} \le 6\epsilon
\end{equation}
holds on the event $\{ K_1 \le 2\epsilon \}\cap\{ K_2 \le 2\epsilon \} \cap \{ \max_{o\le o_l}H_o \le \epsilon\}$.

Putting observations \eqref{bounded_h_supremum} and \eqref{eq_sup_for_z},
the bound \eqref{k2_final_bound} and the bound \eqref{k1_final_bound}
together, we finally obtain
\begin{equation}
  \P \Big\{ \sup_{0\le t\le T}\big\vert\langle f,Z(t) \rangle - \langle f,Z^l(t) \rangle\big\vert \ge 11\epsilon, U(T)\le c \Big\} \le \delta_l
\end{equation}
with
\begin{equation}
    \delta_l := 8(c+T)l^4e^{-\eta l} + o_l C_1 l^{-6} + o_l e^{-\eta_1 l} + o_l C_2 e^{-\eta_2 l}
\end{equation}
which is the statement of the lemma since due to equation \eqref{definition_o_l} $o_l\sim l^4$ holds and therefore the $\delta_l$ are summable.
\end{proof}

\begin{proof}[Proof of Theorem~\ref{main}]
  Almost sure convergence of $Z^l$ to $Z$ with respect to the metric
  \eqref{metric_on_paths_of_measures} follows directly from
  Lemma~\ref{lemma_for_bound} and the Borel-Cantelli Lemma,
  completing the proof of Theorem~\ref{main}.
\end{proof}

\section{The Hille-Yosida approach}

In this section we provide two alternative representations of the
$\Xi_0$-Fleming-Viot generator, leading to the distributional
duality to the $\Xi$-coalescent discussed in Section~\ref{dualities},
and we show that they generate a Markov semigroup on $\M_1(E)$, hence
leading to a classical construction of the $\Xi_0$-Fleming-Viot
process as a Markov process.

\subsection{Two representations of the $\Xi_0$-Fleming-Viot generator}

Recall that if the type space $E$ is a compact Polish space (which is
assumed in this paper), then the set $\M_1(E)$ of all
probability measures on $E$, equipped with the weak topology, is
again a Polish space. We briefly recall the notation from
Section~\ref{sec:introduction}. For $f:E^n\to\R$ bounded and measurable
consider the test function
\begin{equation} \label{eq:testfunction1}
   G_f(\mu)\ :=\ \int_{E^n} f(x_1,\dots,x_n)\,\mu^{\otimes n}(dx_1,\dots,dx_n),
   \quad\mu\in\M_1(E).
\end{equation}
The linear operator $L^{\Xi_0}$ was defined via
\begin{equation} \label{eq:genXiFV1}
L^{\Xi_{0}} G_f(\mu) = \int_\Delta \int_{E^\N}
               \left[ G_f\big( (1-|\bzeta|)\mu +
                             {\textstyle\sum_{i=1}^\infty \zeta_i \delta_{x_i}} \big)
                      - G_f(\mu) \right] \mu^{\otimes \N}(d\mathbf{x})
             \frac{\Xi_0(d\bzeta)}{(\bzeta,\bzeta)}.
\end{equation}
This operator is the $\Xi_0$-Fleming-Viot generator from Proposition~\ref{mainprop}.
The following representation will be useful to establish the duality with the
$\Xi_0$-coalescent. Note that if $\Xi$ is concentrated on
$\{\bzeta\in\Delta\,:\,\zeta_i=0\mbox{ for all } i\ge 2\}$, i.e., if the
corresponding coalescent is a $\Lambda$-coalescent, then this result
has already been obtained by Bertoin and Le~Gall \cite[Eqs. (16) and (17)]{BLG03}.

For convenience, we will denote the transition rates by
\begin{equation}\label{eq:xi.rate}
    \lambda(k_1,\dots,k_p)\ =\ \lambda_{b;k_1,\dots,k_r;s},
\end{equation}
where $k_1\ge\dots\ge k_r\ge 2$, $p-r=s$ and $k_{r+1}=\ldots=k_p=1$.
Furthermore, define for $p,n_1,\dots,n_p\in\N$ such that
$n_1+\dots+n_p>p$ ($\Leftrightarrow$ not all $n_i=1$)
\begin{equation} \label{eq:xi.rate.alt}
   \lambda(n_1,\dots,n_p)\ :=\ \lambda(k_1,\dots,k_p),
\end{equation}
where $k_1\ge\dots\ge k_p$ is the re-arrangement of $n_1,\dots,n_p$ in
decreasing order.
\begin{lemma}\label{lemma_generator_alt}
   The operator $L^{\Xi_0}$ has the alternative representation
   \begin{equation} \label{eq:genXiFV.alt}
      L^{\Xi_{0}} G_f(\mu)
      \ =\ \sum_{\pi=\{A_1,\dots,A_p\}\in\mathcal{P}_n \atop
      \text{not all singletons}}
      \hspace{-1em}
      \lambda(|A_1|,\dots,|A_p|)\int_{E^n}
      \left(f\big(\mathbf{x}[\pi]\big)-f(\mathbf{x})\right)
      \mu^{\otimes n}(dx_1,\dots,dx_n),
   \end{equation}
   where $\mathbf{x}[\{A_1,\dots,A_p\}]\in E^n$ has entries
   \[
   (\mathbf{x}[\{A_1,\dots,A_p\}])_i\ :=\ x_{\min A_j}
   \quad\text{if\ \ $i\in A_j$, $i=1,\dots,n$.}
   \]
\end{lemma}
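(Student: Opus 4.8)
The plan is to expand the $n$-fold tensor power of the resampled measure $\mu' := (1-|\bzeta|)\mu + \sum_{i=1}^\infty \zeta_i\delta_{x_i}$ appearing in \eqref{eq:genXiFV1}, integrate out the auxiliary variables $\mathbf{x}$, and then recognise that the resulting coefficients are exactly the coalescent rates in \eqref{rates}. Writing $\mu' = w_0\nu_0 + \sum_{i\ge 1} w_i\nu_i$ with $w_0 = 1-|\bzeta|$, $\nu_0 = \mu$, and $w_i = \zeta_i$, $\nu_i = \delta_{x_i}$ for $i\ge 1$, I would first expand
\[
G_f(\mu') = \sum_{(i_1,\dots,i_n)} w_{i_1}\cdots w_{i_n} \int_{E^n} f\,d(\nu_{i_1}\otimes\cdots\otimes\nu_{i_n}),
\]
the sum ranging over all $(i_1,\dots,i_n)\in\{0,1,2,\dots\}^n$. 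Each assignment forces the coordinates sharing a common nonzero index to a common value $x_{i_j}$, while coordinates carrying the index $0$ are integrated against $\mu$.

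The next step is to integrate over $\mathbf{x}\sim\mu^{\otimes\N}$. Since the $x_i$ are i.i.d.\ $\mu$, after integration the contribution of a given assignment depends only on the partition $\pi\in\mathcal{P}_n$ it induces (two coordinates in the same block iff they carry the same nonzero index, coordinates with index $0$ being singletons), and equals $\int_{E^n} f(\mathbf{x}[\pi])\,\mu^{\otimes n}(d\mathbf{x})$. Collecting assignments by their induced partition yields $\int_{E^\N} G_f(\mu')\,\mu^{\otimes\N}(d\mathbf{x}) = \sum_{\pi} c(\pi,\bzeta)\int_{E^n} f(\mathbf{x}[\pi])\,\mu^{\otimes n}(d\mathbf{x})$, where $c(\pi,\bzeta)$ is the total weight of all assignments inducing $\pi$. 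Because $\mu'$ is a probability measure, $\sum_\pi c(\pi,\bzeta)=1$, and since the trivial partition $\pi_0$ (all singletons) gives $\int f(\mathbf{x}[\pi_0])\,\mu^{\otimes n}=G_f(\mu)$, I can use this normalisation to absorb the $-G_f(\mu)$ term, obtaining, pointwise in $\bzeta$,
\[
\int_{E^\N}\big[G_f(\mu')-G_f(\mu)\big]\,\mu^{\otimes\N}(d\mathbf{x}) = \sum_{\pi\neq\pi_0} c(\pi,\bzeta)\int_{E^n}\big(f(\mathbf{x}[\pi])-f(\mathbf{x})\big)\,\mu^{\otimes n}(d\mathbf{x}).
\]
It is important to perform this cancellation \emph{before} integrating against $\Xi_0(d\bzeta)/(\bzeta,\bzeta)$, as the individual terms need not be integrable near $\bzeta=\mathbf{0}$. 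For a non-trivial $\pi$ the weight $c(\pi,\bzeta)$ carries at least one factor $\zeta_i^{k}$ with $k\ge 2$, hence is bounded by a multiple of $(\bzeta,\bzeta)$ and integrable against $\Xi_0/(\bzeta,\bzeta)$, so Fubini applies and the $\bzeta$-integral may be taken inside the now convergent sum over $\pi$.

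The heart of the argument, and the step I expect to be most delicate, is the explicit evaluation of $c(\pi,\bzeta)$ for a fixed partition $\pi=\{A_1,\dots,A_p\}$ having $r$ blocks of size $\ge 2$, of sizes $k_1,\dots,k_r$, and $s=p-r$ singletons. An assignment induces $\pi$ iff each of the $r$ large blocks receives a distinct nonzero index while each singleton either receives the index $0$ or a nonzero index occurring nowhere else. Summing over the number $l$ of singletons that receive a distinct nonzero index gives a factor $\binom{s}{l}$ for the choice of those singletons, a factor $(1-|\bzeta|)^{s-l}$ for the remaining index-$0$ singletons, and the sum $\sum_{i_1\neq\cdots\neq i_{r+l}}\zeta_{i_1}^{k_1}\cdots\zeta_{i_r}^{k_r}\zeta_{i_{r+1}}\cdots\zeta_{i_{r+l}}$ over the distinct indices of the $r$ large blocks and the $l$ chosen singletons. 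Integrating against $\Xi_0(d\bzeta)/(\bzeta,\bzeta)$ then reproduces precisely the rate $\lambda_{n;k_1,\dots,k_r;s}$ of \eqref{rates} (with $b=n$ and the Kingman part $a\,\1_{\{r=1,k_1=2\}}$ absent, since we integrate only against $\Xi_0$), which by \eqref{eq:xi.rate}--\eqref{eq:xi.rate.alt} equals $\lambda(|A_1|,\dots,|A_p|)$. Substituting this identification yields \eqref{eq:genXiFV.alt}. The main care needed is the correct enumeration of singletons arising from a singly-occurring nonzero index versus the index $0$, which is exactly the source of the binomial sum in \eqref{rates}.
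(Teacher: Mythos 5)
Your proposal is correct and follows essentially the same route as the paper's proof: expanding $G_f\big((1-|\bzeta|)\mu+\sum_i\zeta_i\delta_{x_i}\big)$ over index assignments (the paper's functions $\phi$ with admissible label vectors), grouping terms by the induced partition, cancelling the $-G_f(\mu)$ term via the normalisation $\sum_\pi c(\pi,\bzeta)=1$ (which is the paper's expansion of $1=\big((1-|\bzeta|)+|\bzeta|\big)^n$) before integrating over $\Delta$, and identifying the resulting binomial sum over singletons with Schweinsberg's rates. Your remark on integrability near $\bzeta=\mathbf{0}$ corresponds exactly to the paper's closing estimate bounding the non-trivial-partition coefficients by $(|\mathcal{P}_n|-1)(\bzeta,\bzeta)$.
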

\begin{remark}
   Note that \eqref{eq:genXiFV.alt} basically boils down to
   \eqref{eq_lambda_fleming_viot}, if $|A_i|=1$ for all but one $A_i$.
\end{remark}
\begin{proof}[Proof of Lemma~\ref{lemma_generator_alt}]
   First note that for fixed $\bzeta$ and $\mathbf{x}$,
   \begin{equation} \label{eq:funnysum1}
   \begin{aligned}
      G_f\big( &(1-|\bzeta|)\mu +
                   {\textstyle\sum_{i=1}^\infty \zeta_i \delta_{x_i}}\big) \\
      & =  \sum_{\phi : \{1,\dots,n\}\to\Z_+}
      (1-|\bzeta|)^{a(\phi)}
      \prod_{j \le n\, : \phi(j)>0} \zeta_{\phi(j)}
      \int_{E^{a(\phi)}} f\big(\eta(\phi, \mathbf{x}, \mathbf{y})\big)
        \mu^{\otimes a(\phi)}(dy_1,\dots,dy_{a(\phi)}),
   \end{aligned}
   \end{equation}
   where $a(\phi):=\#\{1\le j\le n:\phi(j)=0\}$ and   $\eta(\phi,\mathbf{x},\mathbf{y})\in E^n$ is given by
   \[
   \eta(\phi, \mathbf{x}, \mathbf{y})_j
   \ =\
   \left\{\begin{array}{cl}
      x_{\phi(j)} & \mbox{if}\; \phi(j) > 0, \\[1ex]
      y_k         & \mbox{if}\; \phi(j) = 0, \;
      \mbox{where}\; k = \#\{ 1 \le j' \le j : \phi(j') = 0\}.
   \end{array}\right.
   \]
   Identity (\ref{eq:funnysum1}) can be understood as follows:
   Expanding the $n$-fold product of $(1-|\bzeta|)\mu +
   \sum_{i=1}^\infty \zeta_i \delta_{x_i}$, we put $\phi(j)=0$
   if in the $j$-th factor, we use $(1-|\bzeta|)\mu$, and we
   put $\phi(j)=i$ if we use $\zeta_i \delta_{x_i}$ in the $j$-factor.

   Each $\phi:\{1,\dots,n\}\to\Z_+$ is uniquely described by a
   partition $\pi=\{A_1,\dots,A_p\} \in \mathcal{P}_n$ with labels
   $\ell_1,\dots,\ell_p \in \Z_+$ by defining $j \sim_\phi j'$ if and only if
   $\phi(j)=\phi(j')>0$ and putting $\ell_i := \phi(A_i)$, $i=1,\dots,p$.
   Note that for a given partition $\{A_1,\dots,A_p\}$, any vector
   $(\ell_1,\dots,\ell_p) \in \Z_+^p$ of labels with the properties
   \[
   \ell_i=0 \; \Rightarrow \; |A_i|=1 \quad \mbox{and} \quad
   i \neq j, \ell_i, \ell_j \neq 0 \; \Rightarrow \; \ell_i \neq \ell_j
   \]
   is admissible. Thus we have
   \begin{eqnarray}
   \notag
   \lefteqn{\int_{E^\N} G_f\big( (1-|\bzeta|)\mu +
                   {\textstyle\sum_{i=1}^\infty \zeta_i \delta_{x_i}}\big)
           \mu^{\otimes \N}(d\mathbf{x})} \\
   \label{eq:altform1}
   & = &
   \sum_{\pi=\{A_1,\dots,A_p\} \in \mathcal{P}_n}
   \sum_{(\ell_1,\dots,\ell_p) \atop
     \text{admissible}} \hspace{-1em}
   (1-|\bzeta|)^{\#\{1\le i\le p : \ell_i=0\}}
   \prod_{i=1 \atop \ell_i>0}^p \zeta_{\ell_i}^{|A_i|}
   \int_{E^n} f(\mathbf{x}[\pi])\,\mu^{\otimes n}(d\mathbf{x}).
   \end{eqnarray}
   Note that, for a given partition with $p$ blocks, the integration
   appearing in the last line runs effectively only over $E^p$.
   For further simplification assume that the blocks $A_1,\ldots,A_p$
   of $\pi=\{A_1,\dots,A_p\}\in\mathcal{P}_n$ are
   enumerated according to decreasing block size, and write
   $s(\pi)$ for the number of singleton blocks of
   the partition $\pi=\{A_1,\dots,A_p\}$. Then, for a given
   $\pi=\{A_1,\dots,A_p\}\in\mathcal{P}_n$, the last sum in
   (\ref{eq:altform1}) can be written as
   $$
   \sum_{l=0}^{s(\pi)} {{s(\pi)}\choose l}
   (1-|\bzeta|)^{s(\pi)-l}
   \sum_{i_1,\dots,i_{p-s(\pi)+l}\in\N\atop\text{all distinct}}
   \hspace{-2em}
   \zeta_{i_1}^{|A_1|}\cdots\zeta_{i_{p-s(\pi)+l}}^{|A_{p-s(\pi)+l}|}
   \int_{E^n} f\big(\mathbf{x}[\pi]\big)\mu^{\otimes n}(d\mathbf{x}).
   $$
   Furthermore, for any $\bzeta\in\Delta$ and $n\in\N$,
   \begin{eqnarray*}
      1
      & = & \Big( \big(1-|\bzeta|\big) +
              {\textstyle\sum_{i=1}^\infty} \zeta_i \Big)^n\\
      & = & \sum_{\pi=\{A_1,\dots,A_p\}\in\mathcal{P}_n}
            \sum_{l=0}^{s(\pi)} {s(\pi)\choose l}
            (1-|\bzeta|)^{s(\pi)-l}
            \hspace{-2em}
            \sum_{i_1,\dots,i_{p-s(\pi)+l}\in\N\atop\text{all disticnt}}
            \hspace{-2em}
            \zeta_{i_1}^{|A_1|}\cdots\zeta_{i_{p-s(\pi)+l}}^{|A_{p-s(\pi)+l}|}.
   \end{eqnarray*}
   This allows us to re-express the inner integral in (\ref{eq:genXiFV1}) as
   \[
   \begin{aligned}
   \sum_{\pi=\{A_1,\dots,A_p\}\in\mathcal{P}_n} &
   \sum_{l=0}^{s(\pi)} {s(\pi) \choose l}
   (1-|\bzeta|)^{s(\pi)-l}
   \hspace{-2em}
   \sum_{i_1,\dots,i_{p-s(\pi)+l} \in \N \atop
     \text{all distinct}}
   \hspace{-2em}
   \zeta_{i_1}^{|A_1|}\cdots\zeta_{i_{p-s(\pi)+l}}^{|A_{p-s(\pi)+l}|}
   \int_{E^n} [f\big(\mathbf{x}[\pi]\big)
                      -f(\mathbf{x})] \,
                           \mu^{\otimes n}(d\mathbf{x}) \\[2ex]
   = & \sum_{\pi=\{ A_1,\dots,A_p\} \in \mathcal{P}_n \atop
                     \text{not all singletons}}
   \hspace{0em}
   \sum_{l=0}^{s(\pi)} {s(\pi) \choose l}
   (1-|\bzeta|)^{s(\pi)-l}
   \hspace{-2em}
   \sum_{i_1,\dots,i_{p-s(\pi)+l} \in \N \atop
     \text{all distinct}}
   \hspace{-2em}
   \zeta_{i_1}^{|A_1|}\cdots\zeta_{i_{p-s(\pi)+l}}^{|A_{p-s(\pi)+l}|} \\
   & \hspace{10em} {} \times \int_{E^n} [f\big(\mathbf{x}[\pi]\big)
                      -f(\mathbf{x})] \,
                           \mu^{\otimes n}(d\mathbf{x}),
   \end{aligned}
   \]
   because $\mathbf{x}[\{\{1\},\dots,\{n\}\}] = \mathbf{x}$.
   Integrating this equation over $\Delta$ with respect to
   the measure $(\bzeta,\bzeta)^{-1}\Xi_0$ yields (\ref{eq:genXiFV.alt}).
   Note that (see also \cite[p.~844]{S03})
   \begin{eqnarray*}
      &   & \hspace{-15mm}
      \sum_{\pi=\{A_1,\dots,A_p\}\in\mathcal{P}_n\atop\text{not all singletons}}
      \sum_{l=0}^{s(\pi)} {s(\pi)\choose l}
      (1-|\bzeta|)^{s(\pi)-l}
      \hspace{-2em}
      \sum_{i_1,\dots,i_{p-s(\pi)+l}\in\N\atop\text{all distinct}}
      \hspace{-2em}
      \zeta_{i_1}^{|A_1|}\cdots\zeta_{i_{p-s(\pi)+l}}^{|A_{p-s(\pi)+l}|}\\
      & \le &
      \sum_{\pi=\{A_1,\dots,A_p\}\in\mathcal{P}_n\atop\text{not all singletons}}
      \Big(\sum_{i_1=1}^\infty \zeta_{i_1}^2\Big)
      \sum_{l=0}^{s(\pi)} {s(\pi)\choose l}
      (1-|\bzeta|)^{s(\pi)-l}
      \hspace{-2em}
      \sum_{i_{p-s(\pi)+1},\dots,i_{p-s(\pi)+l}\in\N}
      \hspace{-2em}
      \zeta_{i_{p-s(\pi)+1}}\cdots\zeta_{i_{p-s(\pi)+l}}\\
      & = &
      \sum_{\pi=\{A_1,\dots,A_p\}\in\mathcal{P}_n\atop\text{not all singletons}}
      (\bzeta,\bzeta)\sum_{l=0}^{s(\pi)}
      {{s(\pi)}\choose l}(1-|\bzeta|)^{s(\pi)-l}|\bzeta|^l
      \ = \ (|\mathcal{P}_n|-1)\,(\bzeta,\bzeta)
   \end{eqnarray*}
   to verify that there is no singularity near $\bzeta=\mathbf{0}$.
\end{proof}

\subsection{Construction of the Markov semigroup and proof of Proposition~\ref{mainprop}}
\label{markov_semigroup}

The following proposition ensures that there exists a Markov process
attached to the $\Xi_{0}$-Fleming-Viot generator.
\begin{proposition} \label{prop:semigroup}
   The closure of $\{(G_f, L^{\Xi_{0}}G_f):n\in\N, f:E^n\to\R\;
   \mbox{bounded and measurable}\}$ generates a Markov semigroup
   on $\M_1(E)$.
\end{proposition}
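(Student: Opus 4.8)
The plan is to invoke the Hille--Yosida--Ray theorem for Feller semigroups on the \emph{compact} metric space $\M_1(E)$ (recall that $E$ compact Polish makes $\M_1(E)$ compact Polish in the weak topology). Working in the Banach space $C(\M_1(E))$, I take as core the linear span $\mathcal{D}$ of the test functions $G_f$ with $f\in C(E^n)$, $n\in\N$. For such $f$ the map $\mu\mapsto G_f(\mu)$ is weakly continuous, and by the alternative representation of Lemma~\ref{lemma_generator_alt}, $L^{\Xi_0}G_f$ is a finite sum of terms $\int(f(\mathbf{x}[\pi])-f(\mathbf{x}))\,\mu^{\otimes n}(d\mathbf{x})$ with finite coefficients $\lambda(|A_1|,\dots,|A_p|)$, hence again lies in $C(\M_1(E))$; the proof of that lemma already shows there is no singularity at $\bzeta=\mathbf{0}$, so $L^{\Xi_0}$ is everywhere defined on $\mathcal{D}$. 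To apply the theorem I must verify: (i) $\mathcal{D}$ is dense, (ii) $L^{\Xi_0}$ satisfies the positive maximum principle, (iii) $\mathcal{R}(\lambda-L^{\Xi_0})$ is dense for some $\lambda>0$, and (iv) $L^{\Xi_0}\mathbf{1}=0$ (for conservativity).

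Steps (i), (ii), (iv) are routine. Density follows from Stone--Weierstrass: $\mathcal{D}$ is an algebra (since $G_fG_g=G_{f\otimes g}$), contains the constants ($f\equiv 1$), and separates points of $\M_1(E)$ because $\mu\neq\nu$ can be distinguished by some $G_f$ with $f\in C(E)$. Taking $f\equiv 1$ gives $f(\mathbf{x}[\pi])-f(\mathbf{x})\equiv 0$, so $L^{\Xi_0}\mathbf{1}=0$. For the positive maximum principle I use the original jump form \eqref{eq:genXiFV1}: if $G_f$ attains a nonnegative maximum at $\mu_0$, then, because $(1-|\bzeta|)\mu_0+\sum_i\zeta_i\delta_{x_i}$ is again an element of $\M_1(E)$, every integrand $G_f\big((1-|\bzeta|)\mu_0+\sum_i\zeta_i\delta_{x_i}\big)-G_f(\mu_0)$ is $\le 0$, whence $L^{\Xi_0}G_f(\mu_0)\le 0$.

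The crux is the range condition (iii), and here the triangular structure exposed by Lemma~\ref{lemma_generator_alt} does the work. Writing $\lambda_n:=\sum_{\pi\in\mathcal{P}_n,\ \text{not all singletons}}\lambda(|A_1|,\dots,|A_p|)$ for the total non-trivial coalescence rate of $n$ lineages (finite by the integrability established in Lemma~\ref{lemma_generator_alt}), the lemma gives
\[
   L^{\Xi_0}G_f\ =\ -\lambda_n\,G_f\ +\ \sum_{\pi}\lambda(|A_1|,\dots,|A_p|)\,G_{f_\pi},
\]
where each $G_{f_\pi}$ depends on only $|\pi|\le n-1$ coordinates. Thus, letting $\mathcal{A}_n:=\{G_f:f\in C(E^m),\,m\le n\}$, we obtain a finite filtration $\mathcal{A}_0\subset\mathcal{A}_1\subset\cdots$ that $L^{\Xi_0}$ preserves and on whose successive quotients $\mathcal{A}_m/\mathcal{A}_{m-1}$ it acts as the scalar $-\lambda_m$. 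Consequently $\lambda-L^{\Xi_0}$ acts as the invertible scalar $\lambda+\lambda_m>0$ on each quotient, and an induction on $n$ shows that $\lambda-L^{\Xi_0}$ maps $\mathcal{A}_n$ onto $\mathcal{A}_n$: given $g\in\mathcal{A}_n$, one solves the top-degree part using $(\lambda+\lambda_n)^{-1}$ and absorbs the lower-degree remainder by the induction hypothesis on $\mathcal{A}_{n-1}$. Hence $\mathcal{R}(\lambda-L^{\Xi_0})\supseteq\bigcup_n\mathcal{A}_n=\mathcal{D}$, which is dense. I expect this recursion --- and in particular checking that the induced map on each quotient is genuinely scalar multiplication by $-\lambda_m$, so that one never needs any norm control on the unbounded passage $G_f\mapsto f$ --- to be the only delicate point.

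With (i)--(iv) in hand, the Hille--Yosida--Ray theorem (see, e.g., \cite{EK95}) yields that the closure of $\{(G_f,L^{\Xi_0}G_f)\}$ is single-valued and generates a strongly continuous, positive, conservative contraction semigroup on $C(\M_1(E))$, i.e.\ a Markov semigroup; conservativity is exactly the consequence of $L^{\Xi_0}\mathbf{1}=0$. Restricting the core to continuous $f$ (so that $G_f\in C(\M_1(E))$) is harmless: the extension to the bounded measurable $f$ of the statement is absorbed into the closure and the resulting transition function. Finally, the same scheme handles the full generator \eqref{eq_genXiFV}, since $L^{a\delta_{\bf 0}}$ is likewise degree-lowering and $L^B$ acts within each $\mathcal{A}_n$ as a bounded, dissipative operator, so the map induced on each quotient remains invertible for every $\lambda>0$.
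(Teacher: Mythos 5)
Your proof is correct, and its skeleton coincides with the paper's: both apply the Hille--Yosida theorem on $C(\M_1(E))$ (using compactness of $\M_1(E)$), both obtain density of the core of continuous test functions via Stone--Weierstrass, both derive the positive maximum principle from the jump form \eqref{eq:genXiFV1}, both note $L^{\Xi_0}\mathbf{1}=0$ for conservativity, and both rely on Lemma~\ref{lemma_generator_alt} to handle the range condition. The genuine difference lies in how the range condition is closed. The paper's route: by \eqref{eq:genXiFV.alt}, $L^{\Xi_0}$ leaves invariant the \emph{finite-dimensional} spaces $D_f=\operatorname{span}\{\mu\mapsto\int f(\mathbf{x}[\pi])\,\mu^{\otimes n}(d\mathbf{x}):\pi\in\mathcal{P}_n\}$, so $(\lambda-L^{\Xi_0})(D_f)=D_f$ whenever $\lambda$ avoids the finitely many eigenvalues of $L^{\Xi_0}|_{D_f}$; taking a countable union of such spaces gives dense range for all but countably many $\lambda>0$, which is all the theorem needs. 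Your route makes the triangularity explicit: $L^{\Xi_0}G_f=-\lambda_n G_f+\sum_{\pi}\lambda(|A_1|,\dots,|A_p|)G_{f_\pi}$ with each $f_\pi$ depending on fewer variables, so $\lambda-L^{\Xi_0}$ acts on each quotient $\mathcal{A}_n/\mathcal{A}_{n-1}$ as the scalar $\lambda+\lambda_n$, and induction yields surjectivity on the core for \emph{every} $\lambda>0$ --- a slightly stronger conclusion than the paper's; the well-definedness issue you flag is indeed harmless, because $L^{\Xi_0}$ as defined by \eqref{eq:genXiFV1} depends only on $G_f$ as a function on $\M_1(E)$ and preserves $\mathcal{A}_{n-1}$, so the induced quotient map is scalar on a spanning set and hence scalar. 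Three small caveats: your reference for the Hille--Yosida(--Ray) theorem should be \cite{EK86} (Ethier--Kurtz), not \cite{EK95}; both you and the paper effectively prove the statement for continuous $f$ (for merely bounded measurable $f$ the function $G_f$ need not belong to $C(\M_1(E))$, a gloss the paper shares); and your closing claim that the same quotient scheme handles the full generator is too quick, since the map that $L^B$ induces on $\mathcal{A}_n/\mathcal{A}_{n-1}$ is not scalar and its invertibility would need a separate argument --- the paper instead adds the Kingman and mutation parts by a Trotter argument (Remark~\ref{remark_generator}) --- but that point lies outside the stated proposition, which concerns $L^{\Xi_0}$ alone.
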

\begin{proof}
   We write $G$ instead of $G_f$ for convenience. By the Hille-Yosida
   theorem (see, for example, \cite[p.~165, Theorem 2.2]{EK86}) it is
   sufficient to verify that
   \begin{enumerate}
      \item[(i)] the domain $D$ is dense in $C({\mathcal M}_1(E))$,
      \item[(ii)] the operator $L^{\Xi_0}$ satisfies the positive maximum
         principle, i.e., $L^{\Xi_0}G(\mu)\le 0$ for all $G\in D$,
         $\mu\in{\mathcal M}_1(E)$ with
         $\sup_{\nu\in{\mathcal M}_1(E)}G(\nu)=G(\mu)\ge 0$, and that
      \item[(iii)] the range of $\lambda-L^{\Xi_{0}}$ is dense in
         $C({\mathcal M}_1(E))$ for some $\lambda>0$.
   \end{enumerate}
   In order to verify (i) and (iii) we mimic the proof of Proposition~3.5
   in Chapter 1 of \cite{EK86} and construct a suitable sequence
   $D_1,D_2,\ldots$ of finite-dimensional subspaces of $C({\mathcal M}_1(E))$
   such that $D:=\bigcup_{k\in\N}D_k$ is dense in $C({\mathcal M}_1(E))$ and
   $L^{\Xi_{0}}:D_k\to D_k$ for all $k\in\N$ as follows. For $n\in\N$ and
   $f:E^n\to\R$ bounded and measurable let $D_f$ denote the set of all
   linear combinations of elements from the set
   \[
   \{G:G(\mu)={\textstyle\int f(\mathbf{x}[\pi])\,
   \mu^{\otimes n}(d\mathbf{x})},\pi\in{\mathcal P}_n\}.
   \]
   Since $|{\mathcal P}_n|<\infty$, it is easily seen that $D_f$ is a
   finite-dimensional subspace of $C(\M_1(E))$. From (\ref{eq:genXiFV.alt})
   it follows that $L^{\Xi_0}:D_f\to D_f$. For each $n\in\N$ let
   $\{g_{nm}:m\in\N\}\subset C(E^n)$ be dense, and let
   $\{f_k:k\in\N\}$ be an enumeration of $\{g_{nm}:n,m\in\N\}$. Then,
   $D_k:=D_{f_k}$, $k\in\N$, has the desired properties. Note that
   $D:=\bigcup_{k\in\N}D_k$ is dense in $C({\mathcal M}_1(E))$
   (Stone-Weierstrass), i.e. condition (i) holds.

   We have $(\lambda-L^{\Xi_{0}})(D_k)=D_k$ for all $\lambda$ not belonging to
   the set of eigenvalues of $L^{\Xi_{0}}|_{D_k}$, i.e., for all but at most finitely
   many $\lambda>0$. Thus, $(\lambda-L^{\Xi_{0}})(D)=(\lambda-L^{\Xi_{0}})(\bigcup_{k\in\N}D_k)
   =\bigcup_{k\in\N}D_k=D$ is dense in $C({\mathcal M}_1(E))$ for all but at
   most countably many $\lambda>0$. In particular, condition (iii) is
   satisfied.

   Condition (ii) follows from the fact that the expression inside the
   integrals in (\ref{eq_genXiFV_xi}) satisfies
   $$
   G((1-|\bzeta|)\mu+{\textstyle\sum_{i=1}^\infty\zeta_i\delta_{x_i}})
   - G(\mu)\ \le \sup_{\nu\in{\mathcal M}_1(E)}G(\nu)-G(\mu)
   \ =\ G(\mu)-G(\mu)\ =\ 0
   $$
   for all ${\mathbf x}=(x_1,x_2,\ldots)\in E^\N$, ${\bzeta}\in\Delta$,
   $G\in D$ and $\mu\in{\mathcal M}_1(E)$ with
   $\sup_{\nu\in{\mathcal M}_1(E)}G(\nu)=G(\mu)$.

   Thus, the Hille-Yosida theorem ensures that the closure
   $\overline{L^{\Xi_0}}$ of $L^{\Xi_0}$ on $C({\mathcal M}_1(E))$ is
   single-valued and generates a strongly continuous, positive, contraction
   semigroup $\{T_t\}_{t\ge 0}$ on ${\mathcal M}_1(E)$. Note that from (iii)
   it follows that $D$ is a core for $\overline{L^{\Xi_{0}}}$ (\cite[p.~166]{EK86}).
   The operator $L^{\Xi_{0}}$ maps constant functions to the zero function, i.e.,
   $L^{\Xi_{0}}$ is conservative. Thus, $\{T_t\}_{t\ge 0}$ is a Feller semigroup
   and corresponds to a Markov process 
   with sample paths in $D_{{\mathcal M}_1(E)}([0,\infty))$.
\end{proof}
\begin{remark}\label{remark_generator}
 i) If the finite measure $\Xi$ on $\Delta$ allows for some mass
    $a:=\Xi(\{\mathbf 0\})$ at zero, then $L^{\Xi_{0}}$ has to be replaced by
    $L^\Xi := L^{\Xi_{0}} + L^{a\delta_{\bf 0}}$, where $L^{\Xi_{0}}$ is defined as before and $L^{a\delta_{\bf 0}}$ is the generator
    of the classical Fleming-Viot process \cite{FV79} given by \eqref{eq_genXiFV_kingman}.
    The existence of a Markov process $Z=(Z_t)_{t\ge 0}$ with generator
    $L^\Xi$ can be deduced as in the proof of Proposition \ref{prop:semigroup}
    via the Hille-Yosida theorem.\\
ii) The construction of the Markov process attached to the `full' generator $L$,
    including the Kingman component \eqref{eq_genXiFV_kingman} and the mutation
    component \eqref{eq_genXiFV_mutation}, works via the standard Trotter approach.\\
iii) Note that $\int (L^\Xi)G\,d\delta_{\delta_x}=0$, $x\in E$, where
   $\delta_{\nu}\in{\mathcal M}_1({\mathcal M}_1(E))$ denotes the
   unit mass at $\nu\in{\mathcal M}_1(E)$. Thus, see \cite[p.~239, Proposition
   9.2]{EK86}, the states $\delta_x$, $x\in E$, are absorbing for the $\Xi$-Fleming-Viot process.
\end{remark}

We now turn to the proof of Proposition~\ref{mainprop}. Indeed, we verify
the following

\vspace{2mm}

{\em Claim:}
   The distribution of the measure valued Markov process with generator
   $L$, as defined in Remark~\ref{remark_generator} ii), coincides with
   the distribution of the $(\Xi,B)$-Fleming-Viot process, as defined
   in Theorem~\ref{main}.

\vspace{2mm}

It suffices to verify the following lemma.
\begin{lemma} \label{thm_martingale_problem}
  The $(\Xi,B)$-Fleming-Viot process defined in Theorem~\ref{main} solves
  the martingale problem for the generator $L$ given in \eqref{eq_genXiFV}.
\end{lemma}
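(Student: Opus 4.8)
The plan is to verify that for each $n\in\N$ and each bounded measurable $f:E^n\to\R$ lying coordinate-wise in the domain of the mutation operator $B$ (the general case then following by a density argument based on the definition \eqref{mutop}), the process
\[
   M^f_t\ :=\ G_f(Z_t)-G_f(Z_0)-\int_0^t LG_f(Z_s)\,ds
\]
is a martingale with respect to the filtration $(\mathcal H_t)_{t\ge 0}$ from \eqref{eq_def_Ht}. I would argue through the finite-level approximations $Z^l=\frac1l\sum_{i=1}^l\delta_{X_i}$. Since the truncated system $(X_1,\dots,X_l)$ is autonomous — the equation \eqref{eq_stochastic_differential_equation} for $X_l$ involves only $X_1,\dots,X_l$ and independent driving noise, hence is Markov — Dynkin's formula applied to the bounded symmetric test function $\bar G_f(x_1,\dots,x_l):=G_f(\frac1l\sum_{i=1}^l\delta_{x_i})$ shows that
\[
   G_f(Z^l_t)-G_f(Z^l_0)-\int_0^t A_l\bar G_f\big(X_1(s),\dots,X_l(s)\big)\,ds
\]
is a martingale, where $A_l$ denotes the generator of the $l$-particle lookdown model. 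The heart of the argument is then the identity $A_l\bar G_f(X_1(s),\dots,X_l(s))=LG_f(Z^l_s)+\mathcal E_l(s)$ with an error term $\mathcal E_l(s)$ that disappears as $l\to\infty$, after which one passes to the limit using Theorem~\ref{main}.

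The generator $L=L^{a\delta_{\bf 0}}+L^{\Xi_0}+L^B$ splits according to the three independent driving families $\mathfrak N^K_{ij}$, $\mathfrak M^{\Xi_0}$ and $\mathfrak N^{\mathrm{Mut}}_l$, so I would treat the three contributions to $A_l\bar G_f$ separately. The mutation part is exact up to the $O(1/l)$ contribution of coinciding indices and reproduces $L^BG_f(Z^l_s)$. For the Kingman part, each of the $\binom l2$ pairs of levels triggers at rate $a$ a look-down replacing one sampled coordinate by another, and summing the resulting increments of $\bar G_f$ yields $L^{a\delta_{\bf 0}}G_f(Z^l_s)$ up to $O(1/l)$ corrections coming from the upward shift of higher levels and from repeated indices. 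For the large-reproduction part driven by $\mathfrak M^{\Xi_0}$, a point $(t_m,\bzeta_m,\mathbf u_m)$ groups the levels in $\{1,\dots,l\}$ by the colour $g(\bzeta_m,u_{m\cdot})$, so that the post-event empirical measure is, to leading order, $(1-|\bzeta_m|)Z^l_{t_m-}+\sum_i\zeta_{mi}\delta_{(\text{parent of family }i)}$; the combinatorics of the admissible colourings, already carried out in the proof of Lemma~\ref{lemma_generator_alt}, identifies the compensated increment with the integrand of $L^{\Xi_0}$ in the representation \eqref{eq:genXiFV.alt}, equivalently \eqref{eq_genXiFV_xi}. The key structural input here is de~Finetti's theorem: conditionally on $Z_s$ the levels $X_1(s),X_2(s),\dots$ are i.i.d.\ with law $Z_s$, so the parental types behave in the limit like independent $Z_s$-samples, matching the factor $\mu^{\otimes\N}(d\mathbf x)$ in \eqref{eq_genXiFV_xi}.

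It remains to pass to the limit $l\to\infty$. Theorem~\ref{main} provides $G_f(Z^l_t)\to G_f(Z_t)$ uniformly on compact time intervals almost surely, and boundedness of $f$ supplies the uniform integrability needed for the martingale property to survive the limit, once $\int_0^t\mathcal E_l(s)\,ds\to0$. Controlling $\mathcal E_l$ is the main obstacle, and it is concentrated in the large-reproduction term, where the jump times $t_m$ may be dense and where the finite-level event replaces the exact fraction $\zeta_{mi}$ by the fraction of the first $l$ levels landing in family $i$, a $\mathrm{Binomial}(l,\zeta_{mi})$ quantity. The requisite control is exactly of the type established in the proof of Lemma~\ref{lemma_for_bound}: the fourth-moment binomial estimate behind \eqref{eq:sprich}, together with the integrability $\int_\Delta(\bzeta,\bzeta)^{-1}\Xi_0(d\bzeta)=\Xi(\Delta)<\infty$, shows that the discrepancy between the finite-$l$ reproduction operator and $L^{\Xi_0}G_f(Z^l_s)$ is summable in $l$, whence $\int_0^t\mathcal E_l(s)\,ds\to0$ almost surely and in $L^1$. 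Assembling the three contributions proves Lemma~\ref{thm_martingale_problem}, and combining it with the well-posedness of the martingale problem furnished by Proposition~\ref{prop:semigroup} then yields the Claim.
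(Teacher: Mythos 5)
Your route is genuinely different from the paper's, and its central step has a real gap. You apply Dynkin's formula to the test function $\bar G_f(x_1,\dots,x_l):=G_f\bigl(\tfrac1l\sum_{i\le l}\delta_{x_i}\bigr)$ and then need $A_l\bar G_f(X^l(s))=LG_f(Z^l_s)+\mathcal E_l(s)$ with $\int_0^t\mathcal E_l(s)\,ds\to 0$. First, the integrability fact you invoke is false as stated: $\int_\Delta(\bzeta,\bzeta)^{-1}\,\Xi_0(d\bzeta)$ is \emph{not} $\Xi(\Delta)$; what is finite is $\int_\Delta(\bzeta,\bzeta)\,\frac{\Xi_0(d\bzeta)}{(\bzeta,\bzeta)}=\Xi_0(\Delta)$, whereas $\int_\Delta\frac{\Xi_0(d\bzeta)}{(\bzeta,\bzeta)}$ --- the total intensity of non-Kingman reproduction events --- is infinite for generic $\Xi_0$ (already for the Bolthausen--Sznitman $\Lambda$). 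This matters because the per-$\bzeta$ discrepancy between the $l$-level jump expectation and the integrand of \eqref{eq_genXiFV_xi} admits, pathwise, only bounds of order $\min\{2\|f\|,\,C\,l\,(\bzeta,\bzeta)\}$: the post-event window measure differs from the pre-event one by $O(\#\mathrm{births}/l)$ in total variation, and $\E[\#\mathrm{births}]\sim l^2(\bzeta,\bzeta)$ near $\bzeta=\mathbf 0$. Such bounds are not dominated, uniformly in $l$, by a multiple of $(\bzeta,\bzeta)$, so they cannot be integrated against the (possibly infinite-mass) measure $\frac{\Xi_0(d\bzeta)}{(\bzeta,\bzeta)}$; depending on how $\Xi_0$ concentrates near $\mathbf 0$, the resulting error bound need not vanish as $l\to\infty$. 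The only way to beat this is a cancellation argument: the parental types and the types pushed above level $l$ are themselves exchangeable samples, so the discrepancy vanishes \emph{in conditional expectation} given the demographic information, at rate $O((\bzeta,\bzeta))$ per event. But that cancellation is not a pathwise statement, so your claim that $\int_0^t\mathcal E_l(s)\,ds\to0$ \emph{almost surely} is unsupported (and doubtful), and the fourth-moment estimate \eqref{eq:sprich} does not help here --- in the paper it controls path oscillation on intervals of length $l^{-4}$ intersected with $\{U(T)\le c\}$, not a generator comparison.

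The paper sidesteps all of this by never forming an error term. It takes as observable $f(X_1(t),\dots,X_l(t))$ itself (a function of the first $l$ levels, \emph{not} of their empirical measure), writes the compensated jump-SDE martingale \eqref{eq_stochastic_martingale} with the explicit event rates \eqref{eq_rates_of_poisson}, and then uses the exchangeability identity \eqref{f_equals_b},
\begin{equation*}
   \E\bigl[f(X_1(s),\dots,X_l(s))\,\big\vert\,\mathcal J_w\bigr]
   \;=\;\E\bigl[\langle f,Z(s)^{\otimes l}\rangle\,\big\vert\,\mathcal J_w\bigr],
\end{equation*}
together with the resummation \eqref{eq:altform1} from the proof of Lemma~\ref{lemma_generator_alt}, to convert that finite-level martingale \emph{exactly} into the martingale property of $G_f(Z(t))-\int_0^t L^{\Xi_0}G_f(Z(s))\,ds$: the generator comes out evaluated at the limiting measure $Z(s)$, not at $Z^l(s)$, so no limit in $l$ and no error estimate is needed (mutation and the Kingman part are then added as in \cite{DK96}). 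To rescue your approach you would have to prove the uniform $o((\bzeta,\bzeta))$ cancellation bound in a conditional-expectation sense --- at which point you are essentially reproducing the paper's argument --- so I recommend switching to the exact sampling identity \eqref{f_equals_b} as the pivot of the proof.
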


To prepare this, let us concentrate on the case when there is no mutation
and no Kingman-component ($L=L^{\Xi_0}$).
Fix $l$ and suppose we are at the $m$-th birth event. As in the
previous section, let $\{\phi_m^1,\ldots,\phi_m^{a_m}\}$ denote the
assignments of the levels to one of the $a_m$ families. So $\phi_m^i
\subset \{1,\ldots,l\}$ and $\phi_m^i\cap\phi_m^i\neq\emptyset$ for all
$i,j$. Furthermore, we again denote by $\Phi_m :=
\bigcup_{i=1}^{a_m}\phi_m^i$ all individuals participating in the birth
event. Note, that this can be a strict subset of $\{0,\ldots,l\}$, and
$\{\phi_m^1,\ldots,\phi_m^{a_m}\}$ holds all information about what is
going on at the birth event. The function $g(\bzeta,u)$ is
defined as in \eqref{eq_definition_g}.
We introduce a Poisson process counting the number of times a specific
birth event $\{\phi_m^1,\ldots,\phi_m^{a_m}\}$ happens. With
$(t_m,\bzeta_m,{\mathbf u}_m)$ denoting the points of the Poisson
point process $\mathfrak{M}^{\Xi_{0}}$ we define
\begin{equation}
   L_{\{\phi_m^1,\ldots,\phi_m^{a_m}\}}(t)
   \ :=\
   \sum_{t_m\le t} \sum_{b_1,\ldots,b_{a_m}\in\N \atop \text{all\,distinct}}
   \prod_{i=1}^{a_m} \prod_{j\in\phi^i_m} \1_{\{g(\bzeta_m,u_{mj})=b_i\}}
   \prod_{j \in \{1,\ldots,l\}\setminus\Phi_m} \1_{\{g(\bzeta_m,u_{mj})=\infty\}}.
\end{equation}

To describe the effect of the birth event
$\{\phi_m^1,\ldots,\phi_m^{a_m}\}$ on the population vector $x \in
E^l$ we introduce the function $\mathfrak{T}$ defined by
\begin{equation}
    \big(\mathfrak{T}_{\{\phi_m^1,\ldots,\phi_m^{a_m}\}} ({\mathbf x})\big)_i := \begin{cases}
                            x_{\min(\phi_m^j)}  &\text{if}\, k \in \phi_m^j,\\
                            x_{J_m(i)}          &\text{else}
                                                  \end{cases}
\end{equation}
for all $k \in \{1,\ldots,l\}$, where $J_m$ is the function defined in
\eqref{eq_function_jm} that holds the information on where the
non-participating particles should look down to.

With this notation we can use equation \eqref
{eq_stochastic_differential_equation} and the dependence between the
$L_{J,k}^l$ and $L_{J}^l$ to show that
\begin{equation}\label{eq_level_stochastic_differential_equation}
    X^l(t) := X^l(0) + \sum_{\{\phi_m^1,\ldots,\phi_m^{a_m}\}, \atop \dot{\bigcup}\phi_m^i \subset \{1,\ldots,l\}}
\int_0^t \Big( \mathfrak{T}_{\{\phi_m^1,\ldots,\phi_m^{a_m}\}}\big(X^l(s-)\big) - X^l(s-) \Big)\, dL_{\{\phi_m^1,\ldots,\phi_m^{a_m}\}}(s)
\end{equation}
describes the evolution of the first $l$ levels $X^l \in E^l$, if we
assume no mutation and no Kingman part. Note that for simplicity we
use the notation $X^l = (X_1, \ldots, X_l)$.

Since the $L_{\{\phi_m^1,\ldots,\phi_m^{a_m}\}}(t)$ are Poisson processes
derived from the Poisson point process $\mathfrak{M}^{\Xi_{0}}$ it is
straightforward to verify that their rates are given by
\begin{equation} \label{eq_rates_of_poisson}
   r\big(\{\phi_m^1,\ldots,\phi_m^{a_m}\}\big)
   \ :=\ \sum_{i_1,\ldots,i_{a_m} \atop \text{all\ distinct}}
   \int_\Delta
   \zeta_{i_1}^{k_m^1+1}\cdots\zeta_{i_r}^{k_m^r+1}
   \zeta_{i_{r+1}}\cdots\zeta_{i_{a_m}}(1-|\bzeta|)^{(l-|\Phi|)}
   \frac{\Xi_0(d\bzeta)}{(\bzeta,\bzeta)},
\end{equation}
where $k_m^i+1=|\phi_m^i|$ as before and the sets are ordered, such that
$k_m^1\ge\cdots\ge k_m^r\ge 1$ and $k_m^{r+1}=\cdots=k_m^{a_m}=0$
hold. Assume that at least $k_m^1\ge 1$ holds, because otherwise
$\mathfrak{T}$ is the identity. Note that under this assumption the
integral in \eqref{eq_rates_of_poisson} is finite (c.f.\
\cite{S00} or \cite{S03}).

We now turn to the actual proof of the lemma.

\begin{proof}[Proof of Lemma~\ref{thm_martingale_problem}]
We will prove the result for the generator $L^{\Xi_0}$. The full result can
then be obtained in analogy to the proof of Theorem~2.4 in \cite{DK96}.

Indeed, we have to show that for each function $G_f\in\mathcal{D}(L^{\Xi_0})$
of the form
\begin{equation}
    G_f(\mu)\ =\ \langle f,\mu^{\otimes l}\rangle,
\end{equation}
for $\mu\in\mathcal{M}_1(E)$ and $f\colon E^l \to \R$ bounded and measurable,
\begin{equation} \label{martingale}
    G_f(Z(t)) - G_f(Z(0)) - \int_0^t (L^{\Xi_0}G_f)(Z(s))\,ds
\end{equation}
is a martingale with respect to the natural filtration of the Poisson point process $\mathfrak{M}^{\Xi_{0}}$ given by
\begin{equation}
    \{\mathcal{J}_t\}_{t\ge 0} := \Big\{ \sigma\big( \mathfrak{M}^{\Xi_{0}}\Big\vert_{[0,t]\times\Delta\times[0,1]^\N}\big) \Big\}_{t\ge 0}.
\end{equation}
Note that
\begin{equation}\label{f_equals_b}
  \mathbb{E}\Big[ f\big( X_1(s),\ldots,X_l(s)\big)\Big\vert \mathcal{J}_t \Big]
= \mathbb{E}\Big[ \big\langle f,Z(s)^{\otimes l}\big\rangle \Big\vert \mathcal{J}_t\Big]
\end{equation}
holds for all $s,t\ge 0$, which will be crucial in the following steps.

We start by observing that, for $0\le w\le t$, the representation \eqref{eq_level_stochastic_differential_equation} leads to
\begin{multline}\label{eq_stochastic_martingale}
  0 = \mathbb{E} \Big[ f\big(X^l(t)\big) - f\big(X^l(w)\big)\\
  - \sum_{\{\phi_m^1,\ldots,\phi_m^{a_m}\}, \atop \dot{\bigcup}\phi_m^i \subset \{1,\ldots,l\}}\int_w^t \Big(
  f\Big(\mathfrak{T}_{\{\phi_m^1,\ldots,\phi_m^{a_m}\}}\big(X^l(s)\big)\Big)
  - f\big(X^l(s)\big)
  \Big)r\big(\{\phi_m^1,\ldots,\phi_m^{a_m}\}\big)\, ds \Big\vert
  \mathcal{J}_w \Big],
\end{multline}
since this is a martingale.

Using the definition of the rates \eqref{eq_rates_of_poisson} and the
fact that due to the exchangeability of $X^l$, the action of
$\mathfrak{T}_{\{\phi_m^1,\ldots,\phi_m^{a_m}\}}$ and the $[\pi]$
operation under the expectation is the same, we can now rewrite the
last term (without the substraction of $f(X^l(s))$ from the integrand) as
\begin{eqnarray}
  &   & \hspace{-8mm}
        \mathbb{E} \bigg[ \int_w^t \sum_{\{\phi_m^1,\ldots,\phi_m^{a_m}\}, \atop \dot{\bigcup}\phi_m^i \subset \{1,\ldots,l\}} r\big(\{\phi_m^1,\ldots,\phi_m^{a_m}\}\big) f\Big(\mathfrak{T}_{\{\phi_m^1,\ldots,\phi_m^{a_m}\}}
\big(X^l(s)\big)\Big)\, ds \bigg| \mathcal{J}_w \bigg]\nonumber\\
  & = & \mathbb{E} \bigg[ \int_w^t \sum_{\pi=\{A_1,\dots,A_p\} \in \mathcal{P}_n} \sum_{(r_1,\dots,r_p) \atop
\text{admissible}}  \int_\Delta (1-|\bzeta|)^{\#\{r_i=0\}} \prod_{i=1 \atop r_i>0}^p \zeta_{r_i}^{|A_i|}
\frac{\Xi_0(d\bzeta)}{(\bzeta,\bzeta)} f\Big( \big(X^l(s)\big)[\pi]\Big)\,ds \bigg| \mathcal{J}_w \bigg]\nonumber\\
  & = & \mathbb{E} \bigg[ \int_w^t \int_\Delta \sum_{\pi=\{A_1,\dots,A_p\} \in \mathcal{P}_n} \sum_{(r_1,\dots,r_p)
\atop \text{admissible}}   (1-|\bzeta|)^{\#\{r_i=0\}} \prod_{i=1 \atop r_i>0}^p \zeta_{r_i}^{|A_i|} \langle f\circ [\pi],Z(s)^{\otimes l}
\rangle \frac{\Xi_0(d\bzeta)}{(\bzeta,\bzeta)} \,ds \bigg| \mathcal{J}_w \bigg]\nonumber\\
  & = & \mathbb{E} \bigg[ \int_w^t \int_\Delta \int_{E^\N}
  G_f\big( (1-|\bzeta|)Z(s) + {\textstyle\sum_{i=1}^\infty \zeta_i
    \delta_{x_i}}\big) Z(s)^{\otimes
    \N}(d\mathbf{x})\frac{\Xi_0(d\bzeta)}{(\bzeta,\bzeta)} \,ds\bigg| \mathcal{J}_w \bigg],
    \label{eq_really_weird_stuff}
\end{eqnarray}
since the sum about the configurations $\{\phi_m^1,\ldots,\phi_m^{a_m}\}$ and the distinct indices $i_1,\ldots,i_{a_m}$ can be
rewritten as the sum about the partitions $\pi$ and the admissible vectors $(r_1,\ldots,r_p)$. The
last equality holds due to equation \eqref{eq:altform1}.

Combining equation \eqref{eq_stochastic_martingale} with equation
\eqref{eq_really_weird_stuff} we see that
\begin{eqnarray}
   0
   & = & \mathbb{E}\bigg[
            f\big(X^l(t)\big) - f\big(X^l(w)\big)\nonumber\\
   &   & - \int_w^t \int_\Delta \int_{E^\N}
              \big(G_f\big((1-|\bzeta|)Z(s) + {\textstyle\sum_{i=1}^\infty \zeta_i
              \delta_{x_i}}\big) - G_f(Z(s))\big)
              Z(s)^{\otimes \N}(d\mathbf{x})\frac{\Xi_0(d\bzeta)}{(\bzeta,\bzeta)} \,ds
            \bigg|\mathcal{J}_w
        \bigg]\nonumber\\
  & = & \mathbb{E}\bigg[
           \langle f,Z(t)^{\otimes l}\rangle -
           \langle f,Z(w)^{\otimes l}\rangle -
           \int_w^t (L^{\Xi_{0}}G_f)(Z(s))\,ds\bigg|\mathcal{J}_w
        \bigg]\nonumber\\
   & = & \mathbb{E}\bigg[
            G_f\big(Z(t)\big) - G_f\big(Z(w)\big)
            - \int_w^t(L^{\Xi_0}G_f)(Z(s))\,ds\bigg|\mathcal{J}_w
         \bigg]
\end{eqnarray}
holds, where we use \eqref{f_equals_b} in the second equality. Thus,
(\ref{martingale}) is a martingale.
\end{proof}

\section{Dualities}\label{dualities}

\subsection{Distributional duality versus pathwise duality}\label{sec_distributional_duality}

We first establish a {\em distributional duality} in the classical sense of \cite{L85}.
Indeed, (\ref{eq:genXiFV.alt}) and results about the classical Fleming-Viot process bring forth the
following duality between a $\Xi$-coalescent $\Pi=(\Pi_t)_{t\ge 0}$
and a $\Xi$-Fleming-Viot process $Z=(Z_t)_{t\ge 0}$.

\begin{lemma} (Duality) \label{lemma:duality}
   For $n\in\N$, $f:E^n\to\R$ bounded and measurable,
   $\mu\in{\mathcal M}_1(E)$, $\pi\in{\mathcal P}_n$ and $t\ge 0$,
   \begin{equation}\label{analyticduality}
   \mathbb{E}^\mu \Big[ \int_{E^n}
   f\big(\mathbf{x}[\pi]\big) Z_t^{\otimes n}(d\mathbf{x}) \Big]
   =
   \mathbb{E}^\pi \Big[ \int_{E^n}
   f\big(\mathbf{x}[\Pi_t^{(n)}]\big) \mu^{\otimes n}(d\mathbf{x}) \Big],
   \end{equation}
   where $\Pi_t^{(n)}$ is the restriction of $\Pi_t$ to ${\mathcal P}_n$.
\end{lemma}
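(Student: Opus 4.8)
The plan is to realize \eqref{analyticduality} as a standard generator duality between the finite-state Markov chain $\Pi^{(n)}$ (the $\Xi$-coalescent restricted to $\mathcal{P}_n$) and the $\Xi_0$-Fleming-Viot process $Z$, with duality function
\[
H(\mu,\pi)\ :=\ \int_{E^n} f(\mathbf{x}[\pi])\,\mu^{\otimes n}(d\mathbf{x}),\qquad \mu\in\mathcal{M}_1(E),\ \pi\in\mathcal{P}_n.
\]
First I would record that $H$ is bounded, $|H(\mu,\pi)|\le\|f\|_\infty$, and that for fixed $\pi$ the map $\mu\mapsto H(\mu,\pi)$ is exactly a test function of the form $G_{\tilde f}$: integrating out the non-representative coordinates against $\mu^{\otimes n}$ shows $H(\mu,\pi)=G_{\tilde f}(\mu)$ with $\tilde f\colon E^b\to\mathbb{R}$, $b:=|\pi|$, where $\tilde f$ fills the values of the $b$ blocks of $\pi$ into $f$. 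In particular $H(\cdot,\pi)$ lies in the finite-dimensional space on which $L^{\Xi_0}$ was defined in the proof of Proposition~\ref{prop:semigroup}.

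The crucial step is the generator identity
\[
\big(L^{\Xi_0}H(\cdot,\pi)\big)(\mu)\ =\ \big(A\,H(\mu,\cdot)\big)(\pi),
\]
where $A$ is the generator of $\Pi^{(n)}$. I would obtain the left-hand side by applying the alternative representation \eqref{eq:genXiFV.alt} of $L^{\Xi_0}$ to $G_{\tilde f}$ on $E^b$: this expresses it as a sum over partitions $\rho=\{C_1,\dots,C_q\}\in\mathcal{P}_b$ (not all singletons) with weights $\lambda(|C_1|,\dots,|C_q|)$ of increments $G_{\tilde f}(\cdot[\rho])-G_{\tilde f}(\cdot)$. Because integration against $\mu^{\otimes n}$ sees only the underlying set partition, one has $G_{\tilde f\circ[\rho]}=H(\cdot,\rho\cdot\pi)$, where $\rho\cdot\pi$ is the coarsening of $\pi$ obtained by merging the blocks of $\pi$ whose indices lie in a common block of $\rho$; the assignment $\rho\mapsto\rho\cdot\pi$ is a bijection onto the coarsenings of $\pi$. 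By the definitions \eqref{eq:xi.rate}--\eqref{eq:xi.rate.alt}, the weight $\lambda(|C_1|,\dots,|C_q|)$ equals $\lambda_{b;k_1,\dots,k_r;s}$, i.e.\ the transition rate of the $\Xi$-coalescent at a state with $b$ blocks for the collision that merges those blocks according to $\rho$. Hence the left-hand side equals $\sum_{\pi'}q(\pi,\pi')\big(H(\mu,\pi')-H(\mu,\pi)\big)$ with $q(\pi,\cdot)$ the jump rates of $\Pi^{(n)}$ out of $\pi$, which is exactly the right-hand side (the all-singletons $\rho$ being the no-jump term, consistently excluded in \eqref{eq:genXiFV.alt}).

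With the generator identity in hand, the conclusion follows from the standard duality argument (e.g.\ \cite[Section 4.4]{EK86}): running $Z$ and $\Pi^{(n)}$ independently, one checks that $s\mapsto\mathbb{E}^\mu\mathbb{E}^\pi[H(Z_s,\Pi^{(n)}_{t-s})]$ is constant on $[0,t]$, since its derivative is $\mathbb{E}[(L^{\Xi_0}H(\cdot,\Pi^{(n)}_{t-s}))(Z_s)]-\mathbb{E}[(A\,H(Z_s,\cdot))(\Pi^{(n)}_{t-s})]$, which vanishes by the identity; boundedness of $H$, of $L^{\Xi_0}H$ and of $A\,H$ (the latter two because $\mathcal{P}_n$ is finite and $L^{\Xi_0}$ maps $G_{\tilde f}$ within a finite-dimensional subspace) justifies differentiation under the expectation. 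Evaluating the constant at $s=t$ and $s=0$ gives \eqref{analyticduality}. I expect the main obstacle to be the bookkeeping in the generator identity: one must see that $H(\cdot,\pi)$ collapses to a test function on $E^{|\pi|}$ and that, after this reduction, the Fleming-Viot weights $\lambda(|C_1|,\dots,|C_q|)$ coincide with the coalescent rates for merging the $|\pi|$ blocks of $\pi$. The potentially delicate point, that the minimal-element representatives used in $\mathbf{x}[\cdot]$ need not match the actual block merges of the coalescent, is harmless, since both $H(\mu,\cdot)$ and the coalescent transition structure depend on the partition only as an unordered set partition; this is precisely where exchangeability (integration against $\mu^{\otimes n}$) does the work.
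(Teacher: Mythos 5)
Your proposal is correct and is essentially the paper's own argument: the paper establishes this lemma precisely by pointing to the alternative generator representation \eqref{eq:genXiFV.alt} of Lemma~\ref{lemma_generator_alt}, whose partition-indexed weights $\lambda(|A_1|,\dots,|A_p|)$ are by construction the coalescent transition rates, and then invoking the standard generator-duality machinery --- exactly your two steps of collapsing $H(\cdot,\pi)$ to a test function on $E^{|\pi|}$ and matching rates under the coarsening bijection. The only point to add is that when $\Xi(\{\mathbf 0\})=a>0$ the Kingman components must be matched as well (this is what the paper's phrase ``results about the classical Fleming-Viot process'' covers), which your bookkeeping handles verbatim since $L^{a\delta_{\bf 0}}$ in \eqref{eq_genXiFV_kingman} is just the partition sum restricted to single-pair-merger partitions with weight $a$, matching the coalescent's binary collision rate.
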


To obtain a {\em pathwise duality}, we use the driving Poisson
processes of the modified lookdown construction to construct
realisation-wise a $\Xi$-coalescent embedded in the $\Xi$-Fleming-Viot
process.

More explicitly, recall the Poisson processes $L^l_J$ and $L^l_{J,k}$
from equation \eqref{eq_llj} and \eqref{eq_lljk} in Section~\ref{limitpop}
and the Poisson process $\mathfrak{N}^K_{ij}$ defined in Section~\ref{ExCoal}.
For each $t\ge 0$ and $l\in\N$, let $N_t^l(s), 0\le s\le t$, be the level
at time $s$ of the ancestor of the individual at level $l$ at time $t$.
In terms of the $L^l_J$ and $L^l_{J,k}$, the process $N^l_t(\cdot)$ solves,
for $0\le s\le t$,

\begin{align}
\label{genealllogy}
N_t^l(s)\ =\ l &- \sum_{1 \le i < j < l} \int_{s-}^t \1_{\{N_t^l(u+) > j\}} \, d\mathfrak{N}^K_{ij}(u) \notag \\
         & -  \sum_{1 \le i < j < l} \int_{s-}^t (j-i) \1_{\{N_t^l(u+) =  j\}} \, d\mathfrak{N}^K_{ij}(u) \notag \\
    & - \sum_{K \subset \{1,\ldots,l\}} \int_{s-}^t (N_t^l(u+) - J_m(N_t^l(u+)))\1_{\{N_t^l(u+) \notin K\}} \, dL^l_K(u)\notag\\
    & - \sum_{k \in \N} \sum_{K \subset \{1,\ldots,l\}} \int_{s-}^t (N_t^l(u+)-\min(K))
                 \1_{\{N_t^l(u+) \in K\}}\,dL^l_{K,k}(u),
\end{align}
where $J_m(\cdot)=J_{m(u)}(\cdot)$ is defined by \eqref{eq_function_jm} and $m(u)$ is the index of the jump at time $u$.
Fix $0\le T$ and, for $t\le T$, define a partition
$\Pi_t^T$ of $\N$ such that $k$ and $l$ are in the same block
of $\Pi_t^T$ if and only if $N_T^l(T-t) = N_T^k(T-t)$.  Thus, $k$
and $l$ are in the same block if and only if the two levels $k$ and
$l$ at time $T$ have the same ancestor at time $T-t$.
Then (\cite{DK99}, Section~5),
\begin{equation}
\label{eq:pathwisedual}
\mbox{the process $(\Pi_t^T)_{0\le t\le T}$
is a $\Xi$-coalescent run for time $T$}.
\end{equation}
Note that by employing a natural generalisation of the lookdown
construction using driving Poisson processes on $\R$ and e.g.\
using $T=0$ above, one can use the same construction to find an
$\Xi$-coalescent with time set $\R_+$.  We would like to
emphasise that the lookdown construction provides a realisation-wise
coupling of the type distribution process $(Z_t)_{t\ge 0}$ and the
coalescent describing the genealogy of a sample, thus extending
\eqref{analyticduality}, which is merely a statement about
one-dimensional distributions.

\subsection{The function-valued dual of the $(\Xi,B)$-Fleming-Viot process}

The duality between the $   \Xi$-Fleming-Viot process and the
$\Xi$-coalescent established in
Section~\ref{sec_distributional_duality} worked only on the
genealogical level, the mutation was not taken into account. However,
it is possible to define a function-valued dual to the
$(\Xi,B)$-Fleming-Viot process such that not only the genealogical
structure, but also the mutation is part of the duality. This kind of
duality is well known for the classical Fleming-Viot
process, see, e.g., Etheridge \cite[Chapter 1.12]{E00}.

First note, that due to Lemma~\ref{lemma_generator_alt} we can rewrite
the generator of the $(\Xi,B)$-Fleming-Viot process given by equation
\eqref{eq_genXiFV} to obtain
\begin{align}
LG_f(\mu) &:=  a\sum_{1\le i < j \le n} \int_{E^n} \Big(f(x_1,\!.., x_i,\!.., x_i,\!.., x_n)- f(x_1,\!.., x_i,\!.., x_j,\!.., x_n)
                    \Big)\mu^{\otimes n}(d{\bf x}) \nonumber\\
              &\quad + \sum_{\pi = \{ A_1,\dots,A_p\} \in \mathcal{P}_n \atop \text{ not all singletons}}
                 \lambda(|A_1|,\dots,|A_p|)
                 \int_{E^n} \left( f\big(\mathbf{x}[\pi]\big)
                                  -f(\mathbf{x})\right)
                           \mu^{\otimes n}(d{\bf x}), \nonumber\\
              &\quad + r \sum_{i=1}^n \int_{E^n} B_i(f(x_1, \dots, x_n)) \mu^{\otimes n}(d{\bf x}). \label{eq_real_alt_generator}
\end{align}
We can now reinterpret the function $G_f(\mu)$ acting on measures as a
function $G_\mu(f)$ acting on the functions $C_b(E^n)$. This
reinterpretation transfers the operator $L$ acting on
$C\big(\M_1(E)\big)$ to an operator $L^*$ acting on
$C_b\big(C_b(E^n)\big)$. 
Let $\mathcal{C} := \bigcup_{n=1}^\infty C_b(E^n)$. A
$\mathcal{C}$-valued Markov process $(\rho_t)_{t\ge 0}$ solving
the martingale problem for $L^*$ can then be constructed as follows:
\begin{itemize}
\item If $\rho_t({\bf x})\in C_b(E^n)$ and $n\ge 2$, then the
  process $(\rho_t)_{t\ge 0}$ jumps to $\rho_t\big({\bf x[\pi]}\big)$ with rate
  $\lambda(|A_1|,\dots,|A_p|) + a\1_{\{\exists!|A_i| =2;\forall j\neq i: |A_j|=1\}}$,
  for all $\pi = \{ A_1,\dots,A_p\} \in \mathcal{P}_n$, where $|A_j|
  \ge 1$ for at least one $j$.
\item If $\rho_t\in C_b(E)$, that is it is a function of a single
  variable, then no further jumps occur.
\item Between jumps the process evolves deterministically
  according to the ``heat flow'' generated by the mutation operator \eqref{mutop},
  independently for each coordinate.
\end{itemize}
Note that this process is not literally a coalescent, but has
coalescent-like features.

The duality relation between $\rho_t$ and $Z_t$ immediately follows
from \eqref{eq_real_alt_generator} and can be written in integrated
form as
\begin{equation}
   \E_{Z_0} \langle \rho_0, Z_t^{\otimes n} \rangle
\ =\ \E_{\rho_0} \langle \rho_t , Z_0^{\otimes n} \rangle.
\end{equation}
It can be used for example to show uniqueness of the martingale problem
for $L$ via the existence of $(\rho_t)_{t\ge 0}$ or to calculate the
moments of the $(\Xi,B)$-Fleming-Viot process.

\subsection{The dual of the block counting process}

In this section, we specialise to the case where the type space $E$
consists of two types only, say $E=\{0,1\}$. Define the real-valued process
$Y=(Y_t)_{t\ge 0}$ via
$Y_t:=Z_t(\{\text{1}\})$, $t\ge 0$.  Define $g:{\mathcal M}_1(E)\to [0,1]$ via
$g(\mu):=\mu(\{1\})$.  The generator $A$ of $Y$ is then
given by $Af(x)=(L^\Xi(f\circ g))(\mu)$, $f\in C^2([0,1])$, where
$\mu$ depends on $x\in[0,1]$ and can be chosen arbitrary, as long as
$g(\mu)=x$. Thus,
\begin{equation} \label{eq:genA}
Af(x)\ =\ a\frac{x(1-x)}{2}f''(x) + \int_\Delta\int_{\{0,1\}^\N}
\big(
   f((1-|\bzeta|)x+{\textstyle\sum_{i=1}^\infty} \zeta_i y_i) - f(x)
\big)(\mathcal{B}(1,x))^{\otimes\N}(d{\mathbf y})
\frac{\Xi_{0}(d\bzeta)}{(\bzeta,\bzeta)},
\end{equation}
$x\in [0,1]$, $f\in C^2([0,1])$, where $\mathcal{B}(1,x)$ denotes the
Bernoulli distribution with parameter $x$. For $x\in[0,1]$ let
$V_1(x),V_2(x),\ldots$ be a sequence of independent and identically
$\mathcal{B}(1,x)$-distributed random variables. Then,
\[
Af(x)\ =\ a\frac{x(1-x)}{2}f''(x) + \int_\Delta\int_{[0,1]}
\big(
   f((1-|\bzeta|)x+y) - f(x)
\big)Q(\bzeta,x,dy)
\frac{\Xi_0(d\bzeta)}{(\bzeta,\bzeta)},
\]
where $Q(\bzeta,x,.)$ denotes the distribution of
$\sum_{i=1}^\infty \zeta_iV_i(x)$. Hence the process can be considered as
a Wright-Fisher diffusion with jumps. The situation where $\Xi$ is
concentrated on $[0,1]\times\{0\}^\N$, i.e., when the underlying
$\Xi$-coalescent is a $\Lambda$-coalescent, has been studied in \cite{BLG05}.

Note that $A f\equiv 0$ for $f(x)=x$, so $Y$ is a martingale. Furthermore,
the boundary points $0$ and $1$ are obviously absorbing.

In analogy to Lemma \ref{lemma:duality}
it follows that $Y$ is dual to the block counting process
$D=(D_t)_{t\ge 0}$ of the $\Xi$-coalescent with respect to the
duality function $H:[0,1]\times\N\to\R$,
$H(x,n):=x^n$ (see, e.g., Liggett \cite{L85}), i.e.,
\[
\mathbb{E}^y[Y_t^n]\ =\ \mathbb{E}^n[y^{D_t}],\quad n\in\N,y\in [0,1],t\ge 0.
\]
Thus, the moments of the `forward' variable $Y_t$ can be computed via
the generating function of the `backward' variable $D_t$ and vice versa.
Such and closely related moment duality relations are well known from the
literature \cite{AH07, AS05, M99}. The duality can be used to relate the
accessibility of the boundaries of $Y$ and the existence of an entrance
law for $D$ with $D_{0+}=\infty$. Note that by the Markov property and
the structure of the jump rates, we always have
\begin{equation} \label{eq:Dthits1}
   \P^\infty(D_t=1\mbox{ eventually})\ \in\ \{0,1\}
\end{equation}
and either $\P^\infty(\bigcap_{t\ge 0}\{D_t=\infty\})=1$
(if the probability in (\ref{eq:Dthits1}) equals $0$) or
$\lim_{t\to\infty}\P^\infty(D_t=1)=1$ (if the probability in
(\ref{eq:Dthits1}) equals $1$).
\begin{proposition} \label{prop:hitbdry}
   $\lim_{t\to\infty}\P^\infty(D_t=1)=1$ if and only if $Y$, the dual
   of its block counting process, hits the boundary $\{0,1\}$ in finite
   time almost surely, starting from any $y\in(0,1)$.
\end{proposition}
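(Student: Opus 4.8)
The plan is to exploit the moment duality $\E^y[Y_t^n]=\E^n[y^{D_t}]$ stated above, first letting $n\to\infty$ to obtain a ``boundary duality'' relating the one-dimensional distributions of $Y$ to the event $\{D_t=\infty\}$, and then letting $t\to\infty$ and invoking the dichotomy around \eqref{eq:Dthits1}. The starting observation is that since $0\le Y_t\le 1$, as $n\to\infty$ one has $Y_t^n\to\1_{\{Y_t=1\}}$ pointwise, so by bounded convergence $\E^y[Y_t^n]\to\P^y(Y_t=1)$. On the dual side I would use the consistency of the $\Xi$-coalescent: realising the coalescents started from $n$ and from $\infty$ blocks on one probability space as restrictions of a single $\mathcal P_\N$-valued coalescent to $\{1,\dots,n\}$ and to $\N$, the block counts satisfy $D_t^{(n)}\uparrow D_t^{(\infty)}$ (the number of blocks meeting $\{1,\dots,n\}$ increases to the total number of blocks). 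Since $0<y<1$, the map $x\mapsto y^x$ is decreasing with $y^\infty=0$, so $y^{D_t^{(n)}}\downarrow y^{D_t^{(\infty)}}$ and monotone convergence gives $\E^n[y^{D_t}]\to\E^\infty[y^{D_t};D_t<\infty]$. Combining, for all $y\in(0,1)$ and $t\ge0$,
\begin{equation}\label{boundaryduality}
   \P^y(Y_t=1)\ =\ \E^\infty\big[y^{D_t};\,D_t<\infty\big].
\end{equation}

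Next I would pass to the limit $t\to\infty$ in \eqref{boundaryduality}. Because $\{0,1\}$ are absorbing for $Y$, the events $\{Y_t=1\}$ increase in $t$, whence $\P^y(Y_t=1)\uparrow\P^y(\tau_1<\infty)$, where $\tau_1:=\inf\{t:Y_t=1\}$. For the right-hand side I distinguish the two alternatives provided by \eqref{eq:Dthits1}. If $\lim_{t\to\infty}\P^\infty(D_t=1)=1$, then, splitting according to whether $D_t=1$,
\[
   \E^\infty\big[y^{D_t};D_t<\infty\big]
   \ =\ y\,\P^\infty(D_t=1)+\E^\infty\big[y^{D_t};2\le D_t<\infty\big],
\]
and the last term is at most $y^2\,\P^\infty(D_t\ge 2)=y^2\big(1-\P^\infty(D_t=1)\big)\to0$, so the right-hand side of \eqref{boundaryduality} converges to $y$; hence $\P^y(\tau_1<\infty)=y$. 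If instead $\P^\infty\big(\bigcap_{t\ge0}\{D_t=\infty\}\big)=1$, then $y^{D_t}=0$ almost surely for every $t$, so \eqref{boundaryduality} gives $\P^y(Y_t=1)=0$ for all $t$ and therefore $\P^y(\tau_1<\infty)=0$.

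Finally, the same argument applied to $1-Y_t=Z_t(\{0\})$ --- which, by the symmetry of the two-type model under interchanging the labels $0$ and $1$, is again a process of the same kind started from $1-y$ and dual to the same block counting process --- yields $\P^y(\tau_0<\infty)=1-y$ in the first case and $\P^y(\tau_0<\infty)=0$ in the second, where $\tau_0:=\inf\{t:Y_t=0\}$. Since $\{\tau_0<\infty\}$ and $\{\tau_1<\infty\}$ are disjoint (the boundary is absorbing), in the first case $\P^y(Y\text{ hits }\{0,1\})=\P^y(\tau_0<\infty)+\P^y(\tau_1<\infty)=(1-y)+y=1$, i.e.\ $Y$ hits the boundary in finite time almost surely, while in the second case $Y$ never reaches the boundary. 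By \eqref{eq:Dthits1} these two alternatives are exhaustive and mutually exclusive, so they establish both implications of the claimed equivalence. The step I expect to require the most care is the interchange of the $n\to\infty$ limits leading to \eqref{boundaryduality}: one must set up the monotone coupling of the restricted coalescents correctly and properly account for the contribution of the event $\{D_t=\infty\}$ (it is exactly what makes $\P^y(Y_t=1)$ strictly smaller than the ``naive'' limit), and one must verify that the label-swap symmetry indeed preserves both the dynamics of $Y$ and its dual.
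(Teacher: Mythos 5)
Your proof is correct, but it takes a genuinely different route from the paper's. The paper argues pathwise: it realises $Y$ (started from $y$, with $Bf\equiv 0$) through the lookdown construction of Section~\ref{limitpop} with i.i.d.\ $\mathcal{B}(1,y)$ types at time $0$, and uses the ancestral level processes $N^n_T(\cdot)$ together with the pathwise duality (\ref{eq:pathwisedual}) to obtain the almost sure inclusions $\{D'_T=1\}\subset\{Y_T\in\{0,1\}\}$ and $\{D'_T=\infty\}\subset\{0<Y_T<1\}$; the dichotomy around (\ref{eq:Dthits1}) then immediately yields both implications. You instead work purely distributionally: starting from the moment duality $\E^y[Y_t^n]=\E^n[y^{D_t}]$, you let $n\to\infty$ (via the consistency coupling $D_t^{(n)}\uparrow D_t^{(\infty)}$) to obtain $\P^y(Y_t=1)=\E^\infty[y^{D_t};D_t<\infty]$, then let $t\to\infty$ and invoke the same dichotomy, treating the boundary point $0$ by the $0\leftrightarrow 1$ label-swap symmetry of the two-type model. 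Both proofs rest on (\ref{eq:Dthits1}) and on absorption of the boundary; yours additionally needs the consistency property of the $\Xi$-coalescent and the type symmetry (both of which you correctly identify and which do hold), but it bypasses the lookdown machinery entirely and yields as a by-product the explicit absorption probabilities $\P^y(\tau_1<\infty)=y$ and $\P^y(\tau_0<\infty)=1-y$, consistent with $Y$ being a bounded martingale, which the paper's argument does not make explicit. Conversely, the paper's coupling gives a stronger, realisation-wise statement (on the event that the embedded coalescent stays infinite, $Y_T$ lies in the open interval for that very realisation) and avoids the $n\to\infty$ interchange that you rightly flag as the delicate step of your approach.
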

\begin{proof}
   Fix $y\in(0,1)$, $T>0$. Construct $(Z_t)$ starting from
   $y\delta_1+(1-y)\delta_0$ and no mutations, $Bf\equiv 0$, (and
   hence $Y$ starting from $y$) by using the lookdown construction
   from Section~\ref{limitpop}: Let $X_1(0),X_2(0),\dots$ be independent
   $\mathcal{B}(1,y)$-distributed random variables which are independent
   of the driving Poisson processes, and let $X_n(t)$, $t>0$, $n\in\N$,
   be the solution of (\ref{eq_stochastic_differential_equation}). Let
   \[
   D'_t\ :=\ |\{N^n_T(T-t):n\in\N\}|,
   \]
   where $N^n_T(s)$ solves (\ref{genealllogy}). By
   (\ref{eq:pathwisedual}), the law of $(D'_t)_{0 \le t \le T}$ is that
   of the block counting process of the (standard-)$\Xi$-coalescent run
   for time $T$. Then by construction (as there is no mutation),
   \[
   X_n(T) = X_{N^n_T(0)}(0),
   \]
   implying
   \[
   \{D'_T=1\} \subset \{Y_T\in\{0,1\}\}
   \quad\mbox{and}\quad
   \{D'_T=\infty \}\subset \{0<Y_T<1\}\mbox{ almost surely},
   \]
   which easily yields the claim.
\end{proof}
This is related to the so-called `coming down from infinity'-property
of the standard $\Xi$-coalescent (i.e., the property that starting from
$D_0=\infty$, $D_t<\infty$ almost surely for all $t>0$). Recall
(\cite{S00}, p.~39f) that a $\Xi$-coalescent may have infinitely many
classes for a positive amount of time and then suddenly jumps to
finitely many classes. This can occur if $\Xi$ has positive mass on
$\Delta_f:=\{{\mathbf u}=(u_1,u_2,\ldots)\in\Delta:u_1+\cdots+u_n=1
\mbox{ for some $n\in\N$}\}$. On the other hand \cite[Lemma~31]{S00},
if $\Xi(\Delta_f)=0$, then the $\Xi$-coalescent either comes down from
infinity immediately or always has infinitely many classes. Combining
this with Proposition~\ref{prop:hitbdry} we obtain
\begin{remark}
   Assume that $\Xi(\Delta_f)=0$. Then the $\Xi$-coalescent comes down
   from infinity if and only if the dual of its block counting process
   hits the boundary $\{0,1\}$ in finite time almost surely.
\end{remark}
In general, there seems to be no `simple' criterion to check whether a
$\Xi$-coalescent comes down from infinity (see the discussion in Section~5.5
of \cite{S00}). On the other side, there seems to be also no `handy'
criterion for accessibility of the boundary of a process with jumps (and with
values in $[0,1]$), but at least Proposition~\ref{prop:hitbdry}
allows to transfer any progress from one side to the other and vice versa.

We conclude this section with a simple toy example for which most
quantities of interest, in particular the generator $A$, can be
computed explicitly.
\begin{example}
   Fix $l\in\N$. If the measure $\Xi$ is concentrated on
   $\Delta_l:=\{\bzeta\in\Delta:\zeta_1+\cdots+\zeta_l=1\}$, then
   (\ref{eq:genA}) reduces to
   $$
   Af(x)\ =\ \int_\Delta
   \sum_{y_1,\ldots,y_l\in\{0,1\}}
   x^{y_1+\dots+y_l}(1-x)^{l-(y_1+\dots+y_l)}
   \big(f({\textstyle\sum_{i=1}^l \zeta_i y_i})-f(x)\big)
   \frac{\Xi(d\bzeta)}{(\bzeta,\bzeta)}.
   $$
   For example, assume that the measure $\Xi$ assigns its total mass
   $\Xi(\Delta):=1/l$ to the single point $(1/l,\ldots,1/l,0,0,\ldots)
   \in\Delta_l$. Then,
   $$
   Af(x)\ =\ \sum_{k=0}^l {l\choose k}x^k(1-x)^{l-k}f(k/l) - f(x)
   \ =\ \int (f(y/l)-f(x))\,{\cal B}(l,x)(dy),
   $$
   where ${\cal B}(l,x)$ denotes the binomial distribution with
   parameters $l$ and $x$. Note that the corresponding $\Xi$-coalescent
   never undergoes more than $l$ multiple collisions at one time. The
   rates (\ref{eq:xi.rate}) are
   $$
   \lambda(k_1,\ldots,k_p)
   \ =\ \int_\Delta
   \sum_{i_1,\dots,i_p\in\N\atop
     \text{all distinct}}
   \zeta_{i_1}^{k_1}\cdots \zeta_{i_p}^{k_p}\,
   \frac{\Xi(d\bzeta)}{(\bzeta,\bzeta)}
   \ =\ \frac{(l)_p}{l^n},
   $$
   where $(l)_p:=l(l-1)\cdots(l-p+1)$ and $n:=k_1+\cdots+k_p$. The block
   counting process $D$ has rates
   $$
   g_{np}\ =\ \frac{n!}{p!}
   \hspace{-2mm}
   \sum_{{{m_1,\ldots,m_p\in\N}\atop{m_1+\cdots+m_p=n}}}
   \hspace{-1em}
   \frac{\lambda(m_1,\ldots,m_p)}{m_1!\cdots m_p!}
   \ =\ S(n,p)\frac{(l)_p}{l^n},\quad 1\le p<n,
   $$
   where the $S(n,p)$ denote the Stirling numbers of the second kind.
   The total rates are $g_n=\sum_{p=1}^{n-1}g_{np}=1-(l)_n/l^n$,
   $n\in{\mathbb N}$. Note that the corresponding $\Xi$-coalescent
   stays infinite for a positive amount of time
   (`Case 2' on top of \cite[p.~39]{S00} with $\Xi_2\equiv 0$).
   The dual of its block counting process hits the boundary in
   finite time.\hfill$\blacksquare$
\end{example}
\section{Examples}\label{examples}
The first of the two examples in this section presents a model, where
the population size varies substantially due to recurrent bottlenecks.
It is shown, that the $\Xi$-coalescent appears naturally as the limiting
genealogy of this model. In the second example we present the
Poisson-Dirichlet-coalescent by choosing a particular measure for
$\Xi$ which has a density with respect to the
Poisson-Dirichlet distribution. We provide explicit expressions for
several quantities of interest.
\subsection{An example involving recurrent bottlenecks}
Consider a population, say with non-overlapping generations, in which the
population size has undergone occasional abrupt changes in the past.
Specifically, we assume that `typically', each generation contains $N$
individuals, but at several instances in the past, it has been substantially
smaller for a certain amount of time, and then the population has quickly
re-grown to its typical size $N$. This is related to the models considered
by Jagers \& Sagitov in \cite{JS04}, but we assume occasional much more
radical changes in population size than \cite{JS04}.
Let us assume that the demographic history is described by three
sequences of positive real numbers $(s_{i})_{i\in \N}$,
$(l_{i,N})_{i\in \N}$ and $(b_{i,N})_{i\in \N}$, where $0 < b_{i,N}
\le 1$ holds for all $i$, and the population size $t$ generations
before the present is given by $G(t)$, where
\[
   G(t)\ =\
   \begin{cases}
        b_{m,N}N & \text{if}\ N\big(\sum_{i=1}^{m-1}(s_i+l_{i,N})+s_m\big)<t\le N\sum_{i=1}^m (s_i+l_{i,N}),\quad m\in\N,\\
        N & \text{otherwise}.
   \end{cases}
\]
Thus, back in time the population stays at size $N$ for some time $s_i
N$. Then the size is reduced to $b_{i,N} N$ for the time $l_{i,N} N$.
Thereafter it is again given by $N$, until the next bottleneck occurs
after time $s_{i+1} N$. Note that for simplicity, we have assumed
`instantaneous' re-growth after each bottleneck. Furthermore, we
assume that the reproduction behaviour is given by the standard
Wright-Fisher dynamics, so each individual chooses its parent
uniformly at random from the previous generation, independently of the
other individuals. This is the case in every generation, also during
the bottleneck and at the transitions between the bottleneck and the
typical size.

We now want to keep track of the genealogy of a sample of $n$
individuals from the present generation, and describe its dynamics
in the limit $N\to\infty$. Denote by $\Pi^{(N,n)}(t)$ the ancestral
partition of the sample $t$ generations before the present.
\begin{lemma}\label{lemma:bottleneck}
    Fix $(s_i)_{i\in\N}$ and assume that $b_{i,N}\to 0$ and that
    $l_{i,N}\to 0$ as $N\to\infty$. Furthermore assume that
    $b_{i,N}N\to\infty$ and that $l_{i,N}/b_{i,N}\to\gamma_i>0$. Then
    \[
       \Pi^{(N,n)}(Nt)\ \to\ \Pi^{\delta_{\bf 0},(n)}(R_t)
    \]
    weakly as $N\to\infty$ on $D_{\mathcal{P}_n}([0,\infty))$, where
    $R_t:=t+\sum_{i:s_1+\cdots+s_i\le t}\gamma_i$.
\end{lemma}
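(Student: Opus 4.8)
The plan is to exploit that, for fixed $n$, the ancestral process $\Pi^{(N,n)}(\cdot)$ is a time-inhomogeneous Markov chain on the \emph{finite} set $\mathcal{P}_n$ whose one-step transition probabilities are governed piecewise by the demography $G$, and to identify its transition kernels over the normal and the bottleneck stretches separately. Write $\mathcal{K}$ for the generator matrix of Kingman's $n$-coalescent on $\mathcal{P}_n$ (pairwise merger rate $1$, so that $e^{\tau\mathcal{K}}$ is the transition kernel of $\Pi^{\delta_{\bf 0},(n)}$ run for time $\tau$). The two building blocks are: (i) over a \emph{normal} stretch with population $N$, each pair of ancestral lineages finds a common parent in one generation with probability $1/N$, while any multiple or simultaneous merger has probability $O(N^{-2})$, so the one-generation kernel is $I+\mathcal{K}/N+O(N^{-2})$ and a stretch of $\lfloor\tau N\rfloor$ generations yields $(I+\mathcal{K}/N+O(N^{-2}))^{\lfloor\tau N\rfloor}\to e^{\tau\mathcal{K}}$; (ii) over the $i$-th \emph{bottleneck} of $l_{i,N}N$ generations with population $b_{i,N}N$, the one-generation kernel is $I+\mathcal{K}/(b_{i,N}N)+O((b_{i,N}N)^{-2})$.

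First I would verify the scaling identities behind (ii): $l_{i,N}N\cdot(b_{i,N}N)^{-1}=l_{i,N}/b_{i,N}\to\gamma_i$, while the accumulated error is $l_{i,N}N\cdot O((b_{i,N}N)^{-2})=O\big((l_{i,N}/b_{i,N})\cdot(b_{i,N}N)^{-1}\big)\to 0$, the last step crucially using $b_{i,N}N\to\infty$. Hence the $i$-th bottleneck kernel converges to $e^{\gamma_i\mathcal{K}}$: in the limit the bottleneck acts as an instantaneous application of Kingman's coalescent run for time $\gamma_i$. The vanishing of the error term is precisely what rules out simultaneous multiple collisions \emph{within} a single bottleneck, so that each bottleneck contributes only binary-merger (Kingman) activity. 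In the rescaled time $t=\text{(generation)}/N$, the $i$-th bottleneck occupies $\big(\sigma^{(N)}_i,\sigma^{(N)}_i+l_{i,N}\big]$ with $\sigma^{(N)}_i:=\sum_{j=1}^{i-1}(s_j+l_{j,N})+s_i\to s_1+\cdots+s_i=:\sigma_i$ (using $l_{j,N}\to 0$) and vanishing width $l_{i,N}\to 0$; thus each bottleneck collapses to the single instant $\sigma_i$, and the total scaled time spent in normal stretches up to $t$ tends to $t$.

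Next I would assemble the finite-dimensional distributions. By the Markov property, for $t\neq\sigma_i$ (all $i$) the transition kernel of $\Pi^{(N,n)}(N\cdot)$ from $0$ to $t$ is the time-ordered product of the finitely many normal-stretch and bottleneck kernels lying in $[0,t]$; by (i)--(ii) this product converges, and since all factors are powers of $e^{\,\cdot\,\mathcal{K}}$ and hence commute, the limit is $e^{t\mathcal{K}}\prod_{i:\sigma_i\le t}e^{\gamma_i\mathcal{K}}=e^{R_t\mathcal{K}}$ with $R_t=t+\sum_{i:\sigma_i\le t}\gamma_i$. The same concatenation over $0\le t_1<\cdots<t_k$, all distinct from every $\sigma_i$, gives convergence of the joint law to that of $\big(\Pi^{\delta_{\bf 0},(n)}(R_{t_1}),\dots,\Pi^{\delta_{\bf 0},(n)}(R_{t_k})\big)$; as $R$ is continuous off the countable set $\{\sigma_i\}$, this pins down the finite-dimensional distributions on a dense set of times (the initial laws agree, both being the singleton partition).

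Finally I would upgrade this to weak convergence on $D_{\mathcal{P}_n}([0,\infty))$. Here I would use that $\Pi^{(N,n)}(N\cdot)$ is monotone (blocks only merge) and so makes at most $n-1$ jumps, reducing tightness to controlling the \emph{locations} of these jumps: coalescences during normal stretches converge to the jumps of the continuous-rate Kingman part, whereas the at most $n-1$ coalescences occurring inside the $i$-th bottleneck all fall in the shrinking window $\big(\sigma^{(N)}_i,\sigma^{(N)}_i+l_{i,N}\big]$ and therefore, in the Skorokhod $J_1$ topology, merge into the single (possibly multi-block) jump of $\Pi^{\delta_{\bf 0},(n)}(R_\cdot)$ at $\sigma_i$. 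I would confirm tightness via the Aldous criterion. The main obstacle is exactly this last step: ensuring that the clustering of several bottleneck coalescences into one limiting instant is compatible with $J_1$ and produces no spurious mass at nearby times; this is accommodated precisely because the temporal spread $l_{i,N}\to 0$. Combining the finite-dimensional convergence with tightness identifies the limit as $\Pi^{\delta_{\bf 0},(n)}(R_t)$, completing the proof.
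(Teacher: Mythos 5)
Your reduction to products of one-generation kernels is a correct and more explicit rendering of what the paper's (very brief) sketch of proof calls ``classical convergence results'' plus ``glueing'': the normal stretches give $e^{\tau\mathcal{K}}$, and the $i$-th bottleneck gives $e^{\gamma_i\mathcal{K}}$ because $l_{i,N}N$ generations with pairwise coalescence probability $1/(b_{i,N}N)$ produce exponent $l_{i,N}/b_{i,N}\to\gamma_i$ while the accumulated error $l_{i,N}N\cdot O\big((b_{i,N}N)^{-2}\big)\to 0$ precisely because $b_{i,N}N\to\infty$; this also absorbs the transition generations, which is the content of the paper's $O((b_{i,N}N)^{-1})$ glueing remark. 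Up to and including convergence of finite-dimensional distributions at times $t\notin\{\sigma_i\}$, $\sigma_i:=s_1+\cdots+s_i$, your argument is sound and coincides with the paper's route.

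The genuine gap is your last step. In the Skorokhod $J_1$ topology, several separate jumps occurring in a window of vanishing width do \emph{not} ``merge'' into one jump of the combined size; that is exactly what $J_1$ forbids (it is $M_1$-type behaviour), and the vanishing spread $l_{i,N}\to 0$ works \emph{against} you here, not for you. Concretely, for $n\ge 3$: with probability asymptotically bounded away from $0$, the prelimit chain performs two binary mergers at \emph{distinct} generations inside a bottleneck (a double merger within one generation has probability $O\big(l_{i,N}N\cdot(b_{i,N}N)^{-2}\big)\to0$), so its path occupies an intermediate partition on a nonempty time interval, whereas the limit path jumps directly at $\sigma_i$, losing at least two blocks at once. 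Since $\mathcal{P}_n$ is discrete, any path $J_1$-close to a path with such a direct multi-block jump must itself make that direct jump; hence the set of paths containing a multi-block jump is open, has positive probability under the limit law, but vanishing probability under the prelimit laws --- by the Portmanteau theorem, weak $J_1$ convergence fails. For the same reason Aldous's criterion cannot be verified (take $\tau_N$ the first merger time inside the bottleneck and lag $l_{i,N}$): the sequence is not even $J_1$-tight. So this step cannot be repaired as written; what your argument (and the paper's sketch, which is silent on the path-space topology --- its glueing remark concerns coalescences at transitions, not tightness) actually delivers is finite-dimensional convergence off $\{\sigma_i\}$, and any genuine path-space formulation requires a topology weaker than $J_1$.
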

Note that we assume $l_{i,N} \to 0$ as $N \to \infty$, so the duration
of the bottleneck is negligible on the timescale of the `normal'
genealogy. We also assume $b_{i,N}\to 0$ but $Nb_{i,N}\to\infty$,
i.e., in the pre-limiting scenario, the population size during a
bottleneck should be tiny compared to the normal size, but still large
in absolute numbers. The ratio $l_{i,N}/b_{i,N}$ is sometimes called
the {\em severity} of the ($i$-th) bottleneck in the population genetic
literature.

\begin{proof}[Sketch of proof]
  Given sequences $(s_i), (b_{i, N})$ and $(l_{i,N})$, classical
  convergence results for samples of size $n$ can be applied for the
  time-intervals between bottlenecks and ``inside'' the bottlenecks.
  Since $b_{i,N} N \to \infty$, the probability that any of the
  ancestral lines of the sample converge exactly at the transition to
  a bottleneck is $O((b_{i,N} N)^{-1})=o(1)$, so that na\"ive ``glueing'' is
  feasible.
\end{proof}

\begin{remark}
   Note that bottleneck events with $\gamma_i=0$ become invisible in
   the limit, whereas in a bottleneck with $\gamma_i=+\infty$ the
   genealogy necessarily comes down to only one lineage (and thus,
   all genetic variability is erased).
\end{remark}
Since we fixed the $s_i$ and $\gamma_i$, the limiting process
described in Lemma~\ref{lemma:bottleneck} is not a homogeneous Markov
process and thus does not fit literally into the class of exchangeable
coalescent processes considered in this paper. Assume that the waiting
intervals $s_i$ are exponentially distributed, say with parameter
$\beta$, and that the $\gamma_i$ are independently drawn from a
certain law $\mathcal{L}_\gamma$. Thus, in the pre-limiting
$N$-particle model forwards in time, in each generation there is a
chance of $\sim\beta/N$ that a `bottleneck event' with a randomly
chosen severity begins. In this situation, the genealogy of an
$n$-sample from the population at present is (approximately) described
by
\begin{equation}\label{eq:bottleneckcoal}
   \Pi^{\delta_{\bf 0},(n)}(S_t),\quad t\ge 0,
\end{equation}
where $(S_t)_{t\ge 0}$ is a subordinator (in fact, a compound Poisson process
with L\'evy measure $\beta \mathcal{L}_\gamma$ and drift $1$).
\begin{proposition}
   Let $N_\gamma$ be the number of lineages at time $\gamma>0$ in the
   standard Kingman coalescent starting with $N_0=\infty$, and let
   $D_j$ be the law of the re-ordering of a ($j$-dimensional)
   Dirichlet$(1,\dots,1)$ random vector according to decreasing size,
   padded with infinitely many zeros. The process defined in
   (\ref{eq:bottleneckcoal}) is the $\Xi$-coalescent
   restricted to $\{1,\ldots,n\}$, where
   \[
      \Xi(d{\bzeta})\ =\ \delta_{\bf 0}(d{\bzeta})
      + ({\bzeta},{\bzeta})\int_{(0,\infty)}
      \sum_{j=1}^\infty \P(N_\sigma=j) D_j(d\bzeta)
      \,\beta\mathcal{L}_\gamma(d\sigma).
   \]
\end{proposition}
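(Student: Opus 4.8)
The plan is to recognise the process \eqref{eq:bottleneckcoal} as a Kingman coalescent \emph{subordinated} by the compound Poisson subordinator $(S_t)_{t\ge 0}$, and then to read off its collision rates and match them against the $\Xi$-coalescent rates \eqref{rates}. Write $S_t=t+\sum_k\sigma_k\1_{\{\tau_k\le t\}}$, where the jump times $(\tau_k)$ form a Poisson process of rate $\beta$ and the severities $\sigma_k$ are i.i.d.\ with law $\mathcal{L}_\gamma$, all independent of the coalescent. Since $S$ has independent, stationary increments and is independent of $\Pi^{\delta_{\bf 0}}$, the restriction $\Pi^{\delta_{\bf 0},(n)}(S_t)$ is a time-homogeneous Markov chain on $\mathcal P_n$, and I would first show that its generator is $Q+\beta\int_{(0,\infty)}(P_\sigma-I)\,\mathcal{L}_\gamma(d\sigma)$, where $Q$ is the generator and $P_\sigma=e^{\sigma Q}$ the transition semigroup of the Kingman $n$-coalescent. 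The drift term $Q$ contributes exactly the binary Kingman mergers, i.e.\ the $a\delta_{\bf 0}$ part of $\Xi$ with $a=1$; it then remains to identify the jump part $\beta\int(P_\sigma-I)\mathcal{L}_\gamma(d\sigma)$ with the $\Xi_0$-collisions, noting that since subordination preserves sampling consistency the resulting family $(\Xi\text{-rates in }n)$ is consistent and defines a single $\Xi$-coalescent.

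The key structural input concerns the effect of a single jump of severity $\sigma$, which merges the present blocks according to the partition generated by running Kingman's coalescent for an additional time $\sigma$. By the sampling consistency of Kingman's coalescent together with Kingman's paintbox representation of exchangeable partitions, this admits the following description: conditionally on the asymptotic frequencies $\bzeta$ of the families present after time $\sigma$, each block joins family $i$ independently with probability $\zeta_i$. Because the coalescent comes down from infinity there is no dust ($|\bzeta|=1$ almost surely for $\sigma>0$). The distributional fact I need is that, conditionally on the number of families being $N_\sigma=j$, the ranked frequency vector $\bzeta$ is distributed as $D_j$, the decreasing rearrangement of a symmetric Dirichlet$(1,\dots,1)$ vector on the $(j-1)$-simplex. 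I expect this paintbox-plus-Dirichlet identification to be the main obstacle, and would establish it by recalling the classical fact that, started from $n$ lineages, the block sizes of Kingman's coalescent at the epoch of $j$ blocks are uniform over the compositions of $n$ into $j$ positive parts, whence the ranked relative sizes converge to $D_j$ as $n\to\infty$, while consistency guarantees the sampled blocks fall into families by an independent paintbox.

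Granting this, a jump of severity $\sigma$ produces, on a configuration of $b$ blocks, a collision in which groups of sizes $k_1,\dots,k_r\ge 2$ coalesce and $s$ blocks stay single with probability
\[
   \int_\Delta\sum_{l=0}^s\binom{s}{l}(1-|\bzeta|)^{s-l}
   \sum_{i_1\neq\cdots\neq i_{r+l}}
   \zeta_{i_1}^{k_1}\cdots\zeta_{i_r}^{k_r}\zeta_{i_{r+1}}\cdots\zeta_{i_{r+l}}
   \,\sum_{j=1}^\infty\P(N_\sigma=j)\,D_j(d\bzeta),
\]
the integrand being precisely the paintbox probability of the prescribed pattern (only the term $l=s$ survives, as $|\bzeta|=1$). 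Multiplying by the jump rate $\beta\,\mathcal{L}_\gamma(d\sigma)$ and integrating in $\sigma$, the total jump rate of this collision coincides with the $\Xi_0$-integral in \eqref{rates} provided one sets
\[
   \frac{\Xi_0(d\bzeta)}{(\bzeta,\bzeta)}
   \ =\ \beta\int_{(0,\infty)}\sum_{j=1}^\infty\P(N_\sigma=j)\,D_j(d\bzeta)\,\mathcal{L}_\gamma(d\sigma),
\]
that is $\Xi_0(d\bzeta)=(\bzeta,\bzeta)\,\beta\int_{(0,\infty)}\sum_{j}\P(N_\sigma=j)D_j(d\bzeta)\,\mathcal{L}_\gamma(d\sigma)$. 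Adding the Kingman contribution $\delta_{\bf 0}(d\bzeta)$ coming from the unit drift yields exactly the measure $\Xi$ claimed in the proposition. Finally, since $(\bzeta,\bzeta)\le 1$ on $\Delta$ and $\mathcal{L}_\gamma$ is a probability measure, $\Xi(\Delta)\le 1+\beta<\infty$, so $\Xi$ is finite and \eqref{eq:bottleneckcoal} is a bona fide $\Xi$-coalescent with the stated characteristic measure.
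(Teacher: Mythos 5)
Your proposal is correct, and at bottom it rests on exactly the same key fact as the paper's (very terse, three-sentence) proof: that Kingman's coalescent started from infinitely many lineages, conditioned on having $N_\sigma=j$ blocks at time $\sigma$, has ranked asymptotic block frequencies distributed as $D_j$, the ranked Dirichlet$(1,\dots,1)$ vector. The difference is in packaging and completeness. The paper works on the forward side: it simply cites that the classical Fleming-Viot process without mutation has $N_\sigma$ families at time $\sigma$ with Dirichlet$(1,\dots,1)$ sizes, and concludes ``by size-ordering'', leaving the identification of the subordinated process's transition rates with Schweinsberg's formula \eqref{rates} entirely implicit. You work on the coalescent side and make all of that explicit: Phillips' subordination formula gives the generator $Q+\beta\int_{(0,\infty)}(P_\sigma-I)\,\mathcal{L}_\gamma(d\sigma)$ of \eqref{eq:bottleneckcoal}, the drift accounts for the $\delta_{\bf 0}$ part with $a=1$, a severity-$\sigma$ jump acts on the current blocks through Kingman's paintbox (and you correctly observe that absence of dust, $|\bzeta|=1$, kills every term in \eqref{rates} except $l=s$), and the Dirichlet fact itself is derived from the classical observation that the $n$-coalescent's block sizes at the epoch of $j$ blocks are uniform over compositions of $n$ into $j$ positive parts --- a step that is legitimate because the jump chain of Kingman's coalescent is independent of its holding times, so conditioning on $N_\sigma=j$ is equivalent to stopping at the epoch of $j$ blocks. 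Your route buys a self-contained, generator-level verification (including finiteness of $\Xi$ and consistency in $n$) that the paper does not spell out; the paper's route buys brevity by leaning on known Fleming-Viot/lookdown facts. The only point to state with care in a final write-up is the limit exchange from finite-$n$ uniform compositions to the conditional frequencies of the infinite coalescent, which you correctly single out as the main thing to check and which is routine.
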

\begin{proof}
  Recall that the number of families of the classical Fleming-Viot
  process without mutation after $\sigma$ time units is $N_\sigma$.
  Given $N_\sigma=j$, the distribution of the family sizes is a
  uniform partition of $[0,1]$, hence Dirichlet$(1,\dots,1)$.
  Size-ordering thus leads to the above formula for $\Xi$.
\end{proof}

\subsection{The Poisson-Dirichlet case}

The Poisson-Dirichlet distribution ${\rm PD}_\theta$ with parameter
$\theta>0$ is a distribution concentrated on the subset $\Delta^*$ of
points $\bzeta\in\Delta$ satisfying $|\bzeta|=1$. It can, for example,
be obtained via size-ordering of the normalized jumps of a
Gamma-subordinator at time $\theta$. For more information on this
distribution we refer to \cite{K75} or \cite{ABT99}.  Sagitov
\cite{S03} considered the Poisson-Dirichlet coalescent
$\Pi=(\Pi_t)_{t\ge 0}$ with parameter $\theta>0$, where (by
definition) the measure $\Xi$ has density
$\bzeta\mapsto(\bzeta,\bzeta)$ with respect to ${\rm PD}_\theta$. As
the measure ${\rm PD}_\theta$ is concentrated on $\Delta^*$, the rates
(\ref{eq:xi.rate}) reduce to
   $$
   \lambda(k_1,\ldots,k_j)\ =\ \int_{\Delta^*}
   \sum_{i_1,\dots,i_j\in\N\atop\text{all distinct}}
   \zeta_{i_1}^{k_1}\cdots\zeta_{i_j}^{k_j}\,{\rm PD}_\theta(d\bzeta).
   $$
   From the calculations of Kingman \cite{K93} it follows that
   the Poisson-Dirichlet coalescent has rates
   $$
   \lambda(k_1,\ldots,k_j)\ =\ \frac{\theta^j}{[\theta]_k}\prod_{i=1}^j (k_i-1)!,
   $$
   $k_1,\ldots,k_j\in\mathbb{N}$ with $k:=k_1+\dots+k_j>j$, where
   $[\theta]_k:=\theta(\theta+1)\dots(\theta+k-1)$.

   M\"ohle and Sagitov \cite{MS01} characterised
   exchangeable coalescents via a sequence $(F_j)_{j\in\mathbb{N}}$
   of symmetric finite measures. For each $j\in\mathbb{N}$, the measure
   $F_j$ lives on the simplex $\Delta_j:=
   \{(\zeta_1,\ldots,\zeta_j)\in [0,1]^j:\zeta_1+\dots+\zeta_j\le 1\}$ and is
   uniquely determined via its moments
   $$
   \lambda(k_1,\ldots,k_j)\ =\ \int_{\Delta_j}
   \zeta_1^{k_1-2}\cdots\zeta_j^{k_j-2}F_j(d\zeta_1,\ldots,d\zeta_j),
   \quad k_1,\ldots,k_j\ge 2.
   $$
   For the Poisson-Dirichlet coalescent, an application of Liouville's
   integration formula shows that the measure $F_j$ has density
   $f_j(\zeta_1,\ldots,\zeta_j):=\theta^j\zeta_1\cdots\zeta_j
   (1-\sum_{i=1}^j\zeta_i)^{\theta-1}$ with respect to the Lebesgue
   measure on $\Delta_j$.

   As $\Xi$ is concentrated on $\Delta^*$, it follows that
   \begin{equation} \label{finite}
      \int_\Delta\frac{|\bzeta|}{(\bzeta,\bzeta)}\,\Xi(d\bzeta)
      \ =\ \int_\Delta \frac{1}{(\bzeta,\bzeta)}\,\Xi(d\bzeta)
      \ =\ \int_{\Delta^*}\Pi_\theta(d\bzeta)\ =\ 1\ <\ \infty.
   \end{equation}
   By \cite[Proposition~29]{S00}, the Poisson-Dirichlet coalescent
   is a jump-hold Markov process with bounded transition rates and
   step function paths. By \cite[Proposition~30]{S00}, for arbitrary
   but fixed $t>0$, $\Pi_t$ does not have proper frequencies.

   The block counting process $D:=(D_t)_{t\ge 0}$, where $D_t:=|\Pi_t|$
   denotes the number of blocks of $\Pi_t$, is a decreasing process with
   rates
   $$
   g_{nk}
   \ =\ \frac{n!}{k!}\sum_{{n_1,\ldots,n_k\in\mathbb{N}}\atop{n_1+\dots+n_k=n}}
        \frac{\lambda(n_1,\ldots,n_k)}{n_1!\cdots n_k!}
   \ =\ \frac{\theta^k}{[\theta]_n}\frac{n!}{k!}
        \sum_{{n_1,\ldots,n_k\in\mathbb{N}}\atop{n_1+\dots+n_k=n}}
        \frac{1}{n_1\cdots n_k}
   \ =\ \frac{\theta^k}{[\theta]_n}s(n,k),
   $$
   $k,n\in\mathbb{N}$ with $k<n$, where the $s(n,k)$ are the absolute
   Stirling numbers of the first kind. The total rates are
   $$
   g_n\ :=\ \sum_{k=1}^{n-1}g_{nk}\ =\
   1-\frac{\theta^n}{[\theta]_n}, \quad n\in\mathbb{N}.
   $$
   Note that $g_{nk}=\mathbb{P}\{K_n=k\}$,
   $k<n$, where $K_n$ is a random variable taking values in $\{1,\ldots,n\}$
   with distribution
   $$
   \mathbb{P}\{K_n=k\}
   \ =\ \frac{\theta^k}{[\theta]_n}s(n,k),\qquad k\in\{1,\ldots,n\}.
   $$
   We have
   $$
   \gamma_n\ :=\ \sum_{k=1}^{n-1}(n-k)g_{nk}
   \ =\ \sum_{k=1}^{n-1}(n-k){\mathbb P}\{K_n=k\}
   \ =\ n-{\mathbb E}K_n\ \le\ n.
   $$
   In particular, $\sum_{n=2}^\infty\gamma_n^{-1}\ge\sum_{n=2}^\infty 1/n=\infty$.
   Together with (\ref{finite}) and $\Xi(\Delta_f)=0$, where
   $\Delta_f:=\{\bzeta\in\Delta\,|\,\zeta_1+\dots+\zeta_n=1\mbox{ for some }n\}$,
   it follows from \cite[Proposition~33]{S00} that the Poisson-Dirichlet
   coalescent stays infinite.

   If we assume no mutation, then the generator $L^\Xi$ (defined in
   Remark~\ref{remark_generator}) of the corresponding Fleming-Viot
   process reduces to
   $$
   L^{\Xi}G_f(\mu)
   \ =\ \int_{\Delta^*}\int_{E^\mathbb{N}}
   \left[
      G_f\big({\textstyle\sum_{i=1}^\infty \zeta_i \delta_{x_i}}\big)-G_f(\mu)
   \right]
   \mu^{\otimes\mathbb{N}}(d\mathbf{x})
   {\rm PD}_\theta(d\bzeta).
   $$

\bigskip

{\bf Acknowledgement.} We thank the referee for her/his careful reading
of the manuscript and helpful suggestions.

\end{document}